\newtheorem*{teo*}{Theorem}
\newtheorem*{lem*}{Lemma}
\newtheorem*{cor*}{Corollary}
\newtheorem*{pro*}{Proposition}
\newtheorem*{hyp*}{Hypothesis}
\newtheorem*{claim*}{Claim}
\newtheorem*{probl*}{Problem}
\newtheorem*{conj*}{Conjecture}
\newtheorem*{exe*}{Exercise}
\newtheorem*{exa*}{Example}
\newtheorem*{def*}{Definition}
\newtheorem*{rem*}{Remark}
\newtheorem{question}{Question}
\newtheorem{teo}{Theorem}[section]
\newtheorem{cor}[teo]{Corollary}
\newtheorem{lem}[teo]{Lemma}
\newtheorem{pro}[teo]{Proposition}
\newtheorem{rem}[teo]{Remark}
\theoremstyle{definition}
\newtheorem{exa}[teo]{Example}
\newtheorem{defi}[teo]{Definition}
\newcommand{\C}{\mathbb{C}}
\newcommand{\N}{\mathbb{N}}
\newcommand{\Q}{\mathbb{Q}}
\newcommand{\R}{\mathbb{R}}
\newcommand{\T}{\mathbb{T}}
\newcommand{\Z}{\mathbb{Z}}
\newcommand{\Pro}{\mathbb{CP}}
\newcommand{\ccal}{\mathcal{C}}
\newcommand{\fcal}{\mathcal{F}}
\newcommand{\hcal}{\mathcal{H}}
\newcommand{\mcal}{\mathcal{M}}
\newcommand{\ocal}{\mathcal{O}}
\newcommand{\pcal}{\mathcal{P}}
\newcommand{\ucal}{\mathcal{U}}
\newcommand{\zcal}{\mathcal{Z}}
\newcommand{\nsf}{{\sf N}}
\newcommand{\fsf}{{\sf F}}
\newcommand{\lsf}{{\sf L}}
\newcommand{\de}{\partial}
\newcommand{\debar}{\overline{\partial}}
\newcounter{tempo}
\title[Minimal kernels and compact analytic objects in complex surfaces]{Minimal kernels and compact analytic objects in complex surfaces}
\author[S.~Mongodi]{Samuele Mongodi\textsuperscript{1}}
\address[\textsuperscript{1}]{Dipartimento di Matematica - Politecnico di Milano, Via Bonardi 9, I--20133, Milano,  Italy}
\email{samuele.mongodi@polimi.it}
\author[G.~Tomassini]{Giuseppe Tomassini\textsuperscript{2}}
\address[\textsuperscript{2}]{Scuola Normale Superiore, Piazza dei Cavalieri, 7 - I-56126 Pisa, Italy}
\email{giuseppe.tomassini@sns.it}
\subjclass[2010]{Primary 32E, 32T, 32U; Secondary 32E05, 32T35, 32U10}
\keywords{Pseudoconvex domains, Weakly complete spaces, Holomorphic foliations, Minimal kernels, Compact Complex Curves}
\date{\today}
\begin{document}

\begin{abstract}
 In this paper, we want to study the link between the presence of compact objects with some analytic structure and the global geometry of a weakly complete surface. We begin with a brief survey of some now classic results on the local geometry around a (complex) curve, which depends on the sign of its self-intersection and, in the flat case, on some more refined invariants (see the works of Grauert, Suzuki, Ueda). Then, we recall some results about the propagation of compact curves and the existence of holomorphic functions (from the works of Nishino and Ohsawa). With such considerations in mind, we give an overview of the classification results for weakly complete surfaces that we obtained in two joint papers with Slodkowski (see \cite{Mon-Slo-Tom}, \cite{Mon-Slo-Tom1} and we present some new results which stem from this somehow more local (or less global) viewpoint (see Sections \ref{class}, \ref{annuli} and \ref{struct}).
\end{abstract}

\maketitle
\tableofcontents

\section*{Introduction}

The Levi problem, in its broadest formulation, asks for geometric conditions to guarantee that a given complex space is (a modification of) a Stein space or, from another point of view, which geometric obstruction are there to the existence of ``many'' holomorphic functions.


\medskip

The geometric characteristics we look for are usually encoded in the existence of some particular defining function (for domains contained in an ambient space) or of some particular exhaustion function. As it is well known, the class of functions that turns out to work well with holomorphic functions is the class of ({\em strictly}) {\em plurisubharmonic functions}.
The original formulation of the Levi problem, i.e. that every domain in $\C^n$ with smooth pseudoconvex boundary is a domain of holomorphy, was solved by Oka \cites{Oka1, Oka2}, Bremermann \cite{Bre}, Norguet \cite{Nor}. A few years later, Grauert tackled and proved the generalization of the Levi problem to a complex manifold, proving that a complex manifold is Stein if and only if it admits a smooth strictly plurisubharmonic exhaustion \cite{Gra2}, and Narasimhan generalized the result to complex spaces \cites{Nar2-I, Nar2-II}, allowing the exhaustion function to be just continuous.

\medskip

A natural question is to ask what happens if we allow the exhaustion to be only plurisubharmonic, i.e. if we allow its Levi-form to degenerate somewhere. The resulting spaces are called \emph{weakly complete}. Quite obviously, the class of weakly complete spaces includes Stein spaces, but it is not limited to them, as any space which is proper over (i.e. admits a holomorphic proper surjective map onto) a Stein space is weakly complete. Grauert produced an example of a weakly complete space whose only holomorphic functions are the constants, thus showing that not all weakly complete spaces are holomorphically convex (see \cite{Nar}). As a partial converse, a result by Ohsawa, generalized to complex spaces by Vajaitu and the second author (see \cites{Ohsawa, Tomassini-VA} and Section \ref{Ohsawa}), shows that, in dimension $2$, a weakly complete (complex) surface is holomorphically convex if and only if it admits a nonconstant holomorphic function.

\medskip

At first sight, it seems that under the hypothesis of weakly completeness there is no way to tell ``how many'' holomorphic functions exist on our space. However, a closer look at some examples (see Section \ref{examples}) reveals that we obtain more precise information as soon as we look at \emph{how far} our space $X$ is from being Stein: for example, if we have a proper surjective holomorphic map $p:X\to Y$ with positive-dimensional fibers onto a Stein space $Y$, then every plurisubharmonic function on $X$ will be constant on each fiber $p^{-1}(y)$ for $y\in Y$ and every holomorphic function on $Y$ will give, by pullback, a holomorphic function on $X$; on the other hand, in Grauert's example, $X$ contains Levi-flat hypersurfaces whose Levi foliation has dense leaves, therefore $\ocal(X)=\C$.

\medskip

In order to study the obstructions that prevent the existence of a strictly plurisubharmonic function on a complex manifold (or a complex space) $X$, Slodkowski and the second author introduced in \cite{Slo-Tom} the concept of the minimal kernel $\Sigma_X$, as the set of points where no exhaustion function for $X$ can be strictly plurisubharmonic (see Section \ref{prelim}). The crucial property of $\Sigma_X$ is the following: the intersections of $\Sigma_X$ with the regular level sets of a plurisubharmonic exhaustion function enjoy the so called ``local maximum property'' and, in dimension $2$, they are locally a union of complex discs. These compact sets obtained by slicing $\Sigma_X$ with the regular level sets of a plurisubharmonic exhaustion function and their complex structure play a fundamental role in the classification theorem of weakly complete surfaces proved in \cite{Mon-Slo-Tom} (see also Section \ref{class}). The other key ingredient is the presence of a real-analytic plurisubharmonic exhaustion function, which allows us to create a bridge between the local geometry, the geometry of a level set and the geometry of the whole surface.

In the quoted paper \cite{Mon-Slo-Tom}, two kinds of \emph{compact objects with an analytic structure} appeared, playing a significant role:
\begin{itemize}
\item compact complex curves, embedded in $X$,
\item immersed complex curves, with compact closure in $X$.\end{itemize}
They both force the plurisubharmonic functions to have degenerate Levi-form, but they behave quite differently in terms of holomorphic functions. Also, their contributions to the global geometry of the surface do not immediately seem equivalent. In particular, by a result of Nishino (see \cite{Nishino} and Section \ref{Nino}), the presence of a ``generic enough'' compact curve forces the whole weakly complete surface to be a union of compact curves, hence holomorphically convex with Remmert reduction of dimension $1$. On the other hand, immersed curves with compact closure can easily force any holomorphic function to be constant, as soon as the closure is ``large enough'' (e.g. contains a $3$-dimensional stratum, as a real-analytic set, which is the case in the presence of a real-analytic plurisubharmonic exhaustion function), but they do not seem to necessarily ``propagate'' to the whole surface, without some additional hypothesis. When a real-analytic plurisubharmonic exhaustion exists, the presence of an immersed curve with compact closure forces the whole surface (possibly outside an analytic set) to be foliated in $3$-dimensional Levi-flat hypersurfaces, in turn foliated with dense complex leaves. We called such surfaces ``of Grauert type''.

\medskip

The aim of this paper is twofold.

\smallskip

On one side, we give an account of what is known about the geometry of weakly complete surfaces, through the study of the compact sets we mentioned above. We start by recalling the results by Grauert \cite{Gra} and Suzuki \cite{Suzuki} on the neighborhood of a ``negative'' curve (i.e with self-intersection $(C^2)<0$) or ``positive'' curve (i.e with self-intersection $(C^2)>0$) respectively and move on to describe the classification of curves with zero self-intersection obtained by Ueda \cite{Ueda}. Then, we study under which conditions a compact curve propagates to generate a family of compact curves, by recalling the works by \cite{Nishino} and \cite{Ohsawa}. The link between the presence of compact curves and the global geometry is made explicit, in terms of the classification results proved in \cite{Mon-Slo-Tom}. We also briefly recall the results about the geometric structure of the Grauert-type surfaces, proved in \cite{Mon-Slo-Tom1}. Finally, using the results of \cite{Mon-Slo-Tom} we give a slightly different proof of a statement by Brunella \cite{Brunella} on the non-existence of a real-analytic plurisubharmonic exhaustion function on some classes of surfaces.

\smallskip

On the other side, we present some new results which stem from this somehow more local (or less global) viewpoint.

We show that the classification result from \cite{Mon-Slo-Tom} essentially holds when the hypotheses are true outside of a compact (see Theorem \ref{teo_buco} and Corollary \ref{cor12}).

We give a classification theorem for coronae (see Section \ref{annuli} for the precise definition), obtaining, also here, three cases (see Theorem \ref{teo34}): Grauert-type coronae, coronae which are proper over an open complex curve, coronae which are an increasing union of subcoronae with one strictly pseudoconvex boundary component. We show, by an example of Rossi \cite{Rossi}, that the last case is not always the complement of a compact set in a (modification of a) Stein space.

We prove some results for weakly complete surfaces with a plurisubharmonic exhaustion function which is supposed to be only smooth, in the hypothesis that the minimal kernel coincides with the whole surfaces. In this case, our surface is either proper over an open complex curve or the regular level sets of the exhaustion are Levi flat and foliated by dense complex curves, as in Grauert-type surfaces (Theorem \ref{teo_smooth}).

Finally, we introduce some generalizations of the notion of minimal kernel, due to Slodkowski \cite{Slo}, and employ one of them to give a condition under which the Levi problem for a pseudoconvex domain in a general complex manifold has a positive answer (see Theorem \ref{teo_levi}).

\medskip

The content of the paper is organized in 4 sections. In Section 1, we recall the basics on the minimal kernel and its slices along the level sets of a plurisubharmonic exhaustion. Section 2 is devoted to the study of the nature and the propagation of compact curves in a complex surface. In Section 3, we collect some classification results, known and new, and some examples. Finally, Section 4 contains the results on the smooth case, the description of the generalizations of the minimal kernel and their application to the Levi problem.

\section{The minimal kernel and its slices}\label{prelim}

A (reduced, connected) complex space $X$ is said {\em weakly complete} if there exists a (smooth, continuous) plurisubharmonic exhaustion function $\phi:X\to \R$ which is plurisubharmonic; obviously, a compact space statisfies this request in a trivial way, so we will further assume that $X$ is noncompact.

We know, from the results of Grauert and Narasimhan, that, when the exhaustion function can be taken to be everywhere strictly plurisubharmonic, the space is Stein; in general, it is a famous and hard problem to understand under which weaker conditions a space is a Stein space or a modification of a Stein space. Therefore, we study the obstructions that force a plurisubharmonic function to have a degenerate Levi-form.

\begin{defi}The $\ccal^k$ minimal kernel of $X$ is the set of points $x\in X$ such that every $\ccal^k$ plurisubharmonic exhaustion function fails to be strictly plurisubharmonic at $x$ and it is denoted by $\Sigma_X^{k}$. A \emph{$\ccal^k$ minimal function} is a function $\phi\in\ccal^k(X)$ which is plurisubharmonic on $X$ and strictly so exactly on $X\setminus\Sigma_X$.\end{defi}

We will usually employ the $\ccal^\infty$ minimal kernel and we will denote it simply by $\Sigma_X$; the definition and the main properties of the minimal kernel of a weakly complete space were first given in \cite{Slo-Tom}. We recall that a set $Y\subset X$ is called a \emph{local maximum set} if for every $x\in Y$ there exists a neighborhood $U$ of $x$ with the following property: for every compact $K\subset U$ and every (smooth) plurisubharmonic function $\psi$ defined in a neighborhood of $K$,
$$\max_{Y\cap K}\psi =\max_{Y\cap bK}\psi\;,$$
where the maximum over an empty set is understood to be $-\infty$. We will say that a set is a local maximum set or that it has the local maximum property.

Two significant results regarding the minimal kernel of a weakly complete space are the following.

\begin{pro}Let $u:X\to\R$ be a smooth plurisubharmonic exhaustion function; then for every $c\in\R$
$$\Sigma_X\cap\big\{x\in X\ :\ u(x)=c\big\}$$
is either empty or has the local maximum property.\end{pro}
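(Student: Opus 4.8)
The plan is to reduce the local maximum property of the slice $S_c := \Sigma_X \cap \{u = c\}$ to the characterization of $\Sigma_X$ via minimal functions. First I would invoke the existence of a $\ccal^\infty$ minimal function $\phi$ on $X$: a plurisubharmonic exhaustion that is strictly plurisubharmonic precisely on $X \setminus \Sigma_X$ (this is part of the basic theory of the minimal kernel from \cite{Slo-Tom}; its construction is a standard smoothing-and-averaging argument over a countable family of exhaustions realizing the complement of $\Sigma_X$). Fix $x_0 \in S_c$ and a small coordinate ball $U$ around $x_0$ on which both $u$ and $\phi$ are defined. Let $K \subset U$ be compact and let $\psi$ be smooth plurisubharmonic near $K$; I want $\max_{S_c \cap K} \psi = \max_{S_c \cap bK} \psi$.

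The key step is a perturbation argument. Suppose, for contradiction, that the maximum of $\psi$ over $S_c \cap K$ is attained at an interior point $p$ and strictly exceeds the maximum over $S_c \cap bK$. Consider, for small $\varepsilon, \delta > 0$, the function
\[
\Phi_\varepsilon := \phi + \varepsilon \chi(\psi) + \delta (u - c)^2
\]
on a neighborhood of $K$, where $\chi$ is a smooth increasing convex function chosen so that $\chi(\psi)$ is strictly plurisubharmonic-dominant where $\psi$ has no degeneracy issues; more robustly, one localizes and uses that adding a strictly plurisubharmonic bump supported near $p$ to a global minimal function $\phi$, and gluing back to $\phi$ outside $U$ via a standard patching lemma for plurisubharmonic functions, produces a new smooth plurisubharmonic exhaustion $\tilde\phi$ of $X$. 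If we could arrange $\tilde\phi$ to be strictly plurisubharmonic at $p$, then $p \notin \Sigma_X$, contradicting $p \in S_c \subset \Sigma_X$. The term $\delta(u-c)^2$ is there to keep the new function an exhaustion and to control behavior transverse to the level set; the role of $p$ being an interior maximum of $\psi$ on the slice is to ensure that the bump can be added without destroying plurisubharmonicity along the directions tangent to $\{u = c\}$, which is where $\Sigma_X$ concentrates.

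The main obstacle — and the place requiring care — is the patching: adding an $\varepsilon$-small strictly plurisubharmonic perturbation near $p$ must not violate plurisubharmonicity of the glued function on the overlap region, and the glued function must remain an exhaustion of the whole of $X$. This is handled by the classical lemma that $\max(\phi, \phi + \varepsilon\rho)$ (or a regularized maximum) of two plurisubharmonic functions is plurisubharmonic, choosing the bump $\rho$ negative enough near $bU$ that the regularized max equals $\phi$ there; since $\phi$ is strictly plurisubharmonic on $X \setminus \Sigma_X \supset X \setminus U'$ for a slightly smaller $U'$, the perturbation only needs to be dominated on the compact annulus $\overline{U} \setminus U'$, giving a uniform $\varepsilon_0$. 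One then notes that the hypothesis "$\psi$ attains its slice-maximum at an interior $p$ with strict inequality over $S_c \cap bK$" lets us choose the localization neighborhood of $p$ small enough to sit inside $\{\psi > \max_{S_c \cap bK}\psi\}$, so the whole construction takes place away from $bK$ and the exhaustion property is untouched. Having produced $\tilde\phi \in \ccal^\infty(X)$, plurisubharmonic, exhaustive, and strictly plurisubharmonic at $p$, we conclude $p \notin \Sigma_X$, a contradiction; hence no such $p$ exists and $S_c$ has the local maximum property. The "either empty" alternative is simply the case where $\Sigma_X$ does not meet the level $\{u = c\}$ at all.
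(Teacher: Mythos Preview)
Your argument has a genuine gap in the patching step, and in fact as written it proves too much. You propose to glue $\phi$ and $\phi+\varepsilon\rho$ via a regularized maximum, with $\rho$ a ``strictly plurisubharmonic bump'' that is positive at $p$ and ``negative enough near $bU$''. But a plurisubharmonic function on $U$ that is negative on $bU$ is everywhere negative on $\overline{U}$ by the maximum principle; so with such a $\rho$ the regularized maximum is identically $\phi$ and nothing has changed. If instead you allow $\rho$ to be non-plurisubharmonic away from $p$ (say $\rho=-|z-p|^2+r^2$), you need the strict plurisubharmonicity of $\phi$ on the annulus $\overline{U}\setminus U'$ to absorb the negative Levi form of $\varepsilon\rho$. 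You assert exactly this when you write ``$\phi$ is strictly plurisubharmonic on $X\setminus\Sigma_X\supset X\setminus U'$'', i.e.\ $\Sigma_X\subset U'$. That inclusion is false in general: $\Sigma_X$ is typically a large, possibly noncompact set (it can equal all of $X$), and certainly need not sit inside an arbitrarily small ball around $p$. More tellingly, nowhere in the construction is the hypothesis ``$p$ is an interior maximum of $\psi$ on $S_c$'' actually used; your sketch, if it went through, would produce for \emph{every} $p\in\Sigma_X$ a smooth plurisubharmonic exhaustion strictly plurisubharmonic at $p$, forcing $\Sigma_X=\emptyset$.

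The paper does not prove the proposition directly; it cites \cite{Slo-Tom}*{Theorem 3.6} for the case where $u$ is itself a minimal function and \cite{Mon-Slo-Tom}*{Theorem 3.2} for an arbitrary smooth plurisubharmonic exhaustion. The passage from the minimal case to the general one is not by a local bump construction but by observing that if $\phi$ is minimal then so is $u+t\phi$ for every $t>0$ (strict plurisubharmonicity on $X\setminus\Sigma_X$ is inherited, and on $\Sigma_X$ no exhaustion can be strictly plurisubharmonic by definition), and then playing the known minimal case against a limiting argument in $t$. The hypothesis that $\psi$ has a strict interior maximum on the slice enters there, not as a device to localize a bump, but to survive the limit. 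If you want to repair your approach, that is the missing idea.
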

See \cite{Slo-Tom}*{Theorem 3.6} for the case when $u$ is a minimal function, \cite{Mon-Slo-Tom}*{Theorem 3.2} for the general case, \cite{Slo}*{Lemma 4.8} for a further generalization.

\begin{pro}Let $X$ be a complex surface, $u:X\to\R$ be a smooth plurisubharmonic exhaustion function and $Y$ be a connected component of the level set $\{x\in X\ :\ u(x)=c\}$ such that $Y\cap\Sigma_X\neq\emptyset$. Then, for every point $p\in Y_{\textrm{reg}}\cap\Sigma_X$, there exist an open neighborhood $U\subset X$ and local coordinates $z,\ w$ on $U$ such that $U\cong \Delta_z\times\Delta_w$ and
$$U\cap Y\cap \Sigma_X\cong \bigcup_{t\in T}\big\{(z,f_t(z))\ :\ z\in \Delta_z\big\}$$
where each $f_t:\Delta_z\to\Delta_w$ is a holomorphic function.\end{pro}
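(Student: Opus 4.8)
The plan is to feed the local maximum property of the slice, established in the previous proposition, into the structure theory of local maximum sets in complex surfaces, after choosing holomorphic coordinates adapted to the hypersurface $Y$ at the regular point $p$. Since $p\in Y_{\textrm{reg}}$ we have $du(p)\neq 0$, so near $p$ the component $Y$ is a smooth real hypersurface with a well-defined complex tangent line $T_p^{\C}Y$. First I would pick holomorphic coordinates $(z,w)$ on a polydisc $U\cong\Delta_z\times\Delta_w$ centered at $p$ with $T_p^{\C}Y=\{w=0\}$, so that $Y\cap U=\{\operatorname{Re}w=g(z,\operatorname{Im}w)\}$ for a smooth $g$ with $g(0)=0$, $dg(0)=0$, and with $\{u<c\}$ lying on a fixed side of $Y$, say $\{\operatorname{Re}w<g\}$. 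Set $K:=U\cap Y\cap\Sigma_X$; by the previous proposition (applied to $u$) $K$ has the local maximum property.

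The core point is to show that through every $q\in K$ there passes a nontrivial germ of holomorphic curve contained in $K$, necessarily tangent at each of its points to $T^{\C}Y$. Two ingredients enter. First, at each $q\in K$ the Levi form of the level set $\{u=c\}$ (which is $\geq 0$ since $u$ is plurisubharmonic, and which I sign by the side $\{u<c\}$) must in fact vanish: otherwise $\{u=c\}$ would be strictly pseudoconvex near $q$ from that side, and then by the bumping lemma of Slodkowski--Tomassini (\cite{Slo-Tom}) one could produce a plurisubharmonic exhaustion of $X$ that is strictly plurisubharmonic at $q$, contradicting $q\in\Sigma_X$. Second, with this degeneracy in hand, the structure theory of local maximum sets in a complex surface (\cite{Slo-Tom}, \cite{Slo}, see also \cite{Mon-Slo-Tom}) shows that $K$ is, near each of its points, a union of holomorphic curves; since these curves lie in $Y$ and are complex, their tangent lines automatically lie in $T^{\C}Y$, so they are ``Levi leaves'' of $Y$ along $K$ (where $K$ is three-dimensional these are the leaves of the Levi foliation of the Levi-flat hypersurface $Y$; where $K$ is thinner, they are isolated complex curves).

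It then remains to straighten these leaves over a common base disc. For $q$ close to $p$ the line $T_q^{\C}Y$ is close to $T_p^{\C}Y=\{w=0\}$, so every leaf meeting a sufficiently small $U$ is uniformly transverse to the vertical discs $\{z=\mathrm{const}\}$; writing a leaf as a graph $w=f(z)$ and integrating the (bounded) line field defining the leaves from an initial value on $\{z=0\}$, one sees after shrinking $\Delta_w$ that each leaf meeting $U$ extends to a full graph $\{(z,f_t(z)):z\in\Delta_z\}$ with $f_t:\Delta_z\to\Delta_w$ holomorphic. Taking $T:=K\cap(\{0\}\times\Delta_w)$ to index the leaves gives the asserted identification $U\cap Y\cap\Sigma_X\cong\bigcup_{t\in T}\{(z,f_t(z)):z\in\Delta_z\}$.

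The step I expect to be the main obstacle is the middle one: extracting genuine analytic (disc) structure from the purely potential-theoretic local maximum property, i.e.\ ruling out exotic local maximum sets that carry no complex discs. In a complex surface this is exactly the content of Slodkowski's analysis of local maximum sets, and the bridge to $\Sigma_X$ is the elementary but indispensable observation that $\Sigma_X$ meets a regular level set only where that level set fails to be strictly pseudoconvex. A secondary technical point is the uniformity required to run a single flow box for all leaves simultaneously, which is dealt with by shrinking $U$ and using continuity of $q\mapsto T_q^{\C}Y$.
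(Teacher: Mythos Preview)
The paper does not give its own proof of this proposition; it simply refers the reader to \cite{Slo-Tom}*{Lemma 4.1} (for the case of a minimal function) and \cite{Mon-Slo-Tom}*{Proposition 3.5} (for the general case). Your outline---local maximum property of the slice from the preceding proposition, then Slodkowski's disc-structure theorem for local maximum sets in a complex surface, then straightening the resulting complex leaves into graphs over $\Delta_z$ via transversality of $T^{\C}Y$ to the $w$-direction near $p$---is a faithful sketch of precisely the argument carried out in those references, so your approach and the paper's coincide.
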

See \cite{Slo-Tom}*{Lemma 4.1} for the case when $u$ is a minimal function, \cite{Mon-Slo-Tom}*{Proposition 3.5} for the general case.

\medskip

From these two results, there seems to be a strong relation between the presence of compact subspaces and the minimal kernel, at least in dimension $2$.

In general, one can say that a compact subspace or an immersed complex space with compact closure belong for sure to the minimal kernel; we do not know examples of spaces whose minimal kernel has a connected component which is not a union of the former.

\medskip

One striking property of the minimal kernel is the following propagation result  (see \cite{Slo-Tom}*{Theorem 3.9}).

\begin{teo}Let $X$ be a weakly complete manifold of dimension $\geq 2$ and $\phi:X\to\R$ be a $\ccal^2$ plurisubharmonic exhaustion function. Let $r>\min\phi$ and let $Y$ be a connected component of $\{x\in X\ :\ \phi(x)=r\}$, relatively open in the latter and that does not contain local minimum points of $\phi$.

If $Y$ is a local maximum set, there exists $s<r$ such that the topological boundary of the connected component $K$  of $\{x\in X\ :\ s<\phi(x)<r\}$ containing $Y$ is contained in $Y\cup\{x\in X\ :\ \phi(x)=s\}$. Then
\begin{itemize}
\item[$(a)$]$K$ is a connected compact set with nonempty interior,
\item[$(b)$]the forms
$$(\de\debar\phi)^{n-1}\wedge\de\phi\wedge\debar\phi,\ \ (\de\debar\phi)^{n-1}\wedge\de\phi,\ \ (\de\debar\phi)^{n-1}\wedge\debar\phi$$
vanish on $K$ and $(\de\debar\phi)^n$ vanishes on $K\setminus Y$,
\item[$(c)$]every level set
$$
\big\{x\in K\ :\ \phi(x)=t\big\}\;,
$$ for $s\leq t\leq r$, has the local maximum property.
\end{itemize}
\end{teo}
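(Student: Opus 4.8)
The plan is to extract $s$ and assertion $(a)$ from a soft topological argument, then to establish the propagation of the local maximum property $(c)$ --- which is the crux --- and finally to read off the vanishing of the forms $(b)$ from it; this reproduces the argument of \cite{Slo-Tom}*{Theorem 3.9}. For the topology: since $\phi$ is an exhaustion, $\{\phi=r\}$ is compact, and since $Y$ is simultaneously a connected component and a relatively open subset of it, $Y$ is clopen in $\{\phi=r\}$; so $Y$ and $\{\phi=r\}\setminus Y$ are disjoint compacta, which I separate by open sets $U\supseteq Y$ and $U'\supseteq\{\phi=r\}\setminus Y$ with $\overline U\cap\overline{U'}=\emptyset$, whence $\overline U\cap\{\phi=r\}=Y$. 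As $s\uparrow r$ the compacta $\{s\le\phi\le r\}$ decrease to $\{\phi=r\}\subseteq U\cup U'$, so for $s\in(\min\phi,r)$ close enough to $r$ one has $\{s\le\phi\le r\}\subseteq U\cup U'$; then $\{s<\phi<r\}$ splits as the disjoint union of its traces on $U$ and $U'$, so each of its connected components lies entirely in one of the two. Since $Y$ has no local minimum of $\phi$, every point of $Y$ is a limit of points of $\{s<\phi<r\}$ whose $\phi$-values tend to $r$, hence lies in the closure of a component contained in $U$; let $K$ be the union of these components (that $K$ is connected follows from the local structure of the local maximum set $Y$). Then $\overline K$ is compact with $\overline K\setminus K\subseteq\{\phi=s\}\cup\{\phi=r\}$ and $\overline K\cap\{\phi=r\}\subseteq\overline U\cap\{\phi=r\}=Y$, so $bK\subseteq Y\cup\{\phi=s\}$; and $K$, being open and nonempty in the manifold $X$, is dense in $\overline K$, which therefore has nonempty interior. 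Reading $K$ as $\overline K$ in $(a)$, this settles the boundary statement and $(a)$.

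For $(c)$, put $Y_t:=\{x\in\overline K:\phi(x)=t\}$, so that $Y_r=Y$ is a local maximum set, and let $I:=\{t\in[s,r]:Y_t\text{ has the local maximum property}\}$. I would show that $I$ is closed --- the local maximum property survives Kuratowski limits of the compact slices $Y_{t_n}$, which vary continuously near regular points of $\phi$ because $bK$ lies inside level sets --- and that $I$ is relatively open in $[s,r]$: given $t_0\in I$ and $t$ slightly below $t_0$, a plurisubharmonic function defined near a point of $Y_{t}$ and having there a strict local maximum can be transported, by a $\max$-gluing with a suitable modification, into a plurisubharmonic function near $Y_{t_0}$ with a strict local maximum, contradicting $t_0\in I$; since $[s,r]$ is connected and $r\in I$, this forces $I=[s,r]$. \emph{This openness step is the main obstacle}: producing the modification and checking that it remains plurisubharmonic requires controlling the degenerate directions of $\de\debar\phi$ throughout $\overline K$ --- essentially that they integrate to a local lamination of the level sets by complex submanifolds --- and in \cite{Slo-Tom} this is handled by first reducing to a minimal function $\phi$, so that Sard's theorem along regular level sets and the structure theory of local maximum sets (the analytic-disc picture recalled above) become available.

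Finally, $(b)$ follows from $(c)$. Let $p\in\overline K$. If $d\phi(p)\neq 0$, then, $p$ lying in the closure of the open set $K$, the slice $Y_{\phi(p)}$ contains arbitrarily close to $p$ open pieces of the smooth hypersurface $\{\phi=\phi(p)\}$; by $(c)$ such a piece is a local maximum set, hence contains no strictly pseudoconvex point (at a strictly pseudoconvex point there is a local holomorphic peak function whose real part is a pluriharmonic function with a strict maximum on the hypersurface). Thus $\de\debar\phi$ restricted to the complex tangent space $\ker\de\phi(p)$ is degenerate, which yields the vanishing at $p$ of $(\de\debar\phi)^{n-1}\wedge\de\phi\wedge\debar\phi$, of $(\de\debar\phi)^{n-1}\wedge\de\phi$ and of its conjugate; and a null vector of $\de\debar\phi|_{\ker\de\phi(p)}$ is, by positive semidefiniteness of $\de\debar\phi$, a null vector on the whole tangent space, so $(\de\debar\phi)^n(p)=0$ as well. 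If instead $d\phi(p)=0$ the three mixed forms vanish trivially, and if moreover $p\in\overline K\setminus Y$ then $p$ is not a strict local minimum of $\phi$ --- otherwise $Y_{\phi(p)}=\{p\}$ near $p$, which is not a local maximum set, against $(c)$ --- so $\de\debar\phi(p)$ is degenerate and $(\de\debar\phi)^n(p)=0$. By continuity (regular points being dense in $\overline K$) the three mixed forms vanish on all of $\overline K$ and $(\de\debar\phi)^n$ vanishes on $\overline K\setminus Y$, which is $(b)$.
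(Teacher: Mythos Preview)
The paper does not prove this statement: it is quoted from \cite{Slo-Tom}*{Theorem 3.9} and no argument is given here. So there is nothing in the present paper to compare your proposal against; what you have written is a sketch of what you take the original proof to be, and you are candid that the heart of it --- the ``openness'' of $I$ in step $(c)$ --- is left as a pointer to \cite{Slo-Tom} rather than carried out. That is the real content of the theorem, and your outline (closed plus open on a connected interval) has the right shape but no substance at the decisive step.

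Separately, your derivation of $(b)$ at critical points contains a genuine error. You write that if $p\in\overline K\setminus Y$ is critical and ``not a strict local minimum of $\phi$ \dots\ so $\de\debar\phi(p)$ is degenerate''. This implication is false: take $\phi(z_1,z_2)=|z_1|^2+|z_2|^2+2\,\mathrm{Re}(z_1^2)$ at the origin; then $\de\debar\phi$ is the identity (positive definite), yet in real coordinates $\phi=3x_1^2-y_1^2+x_2^2+y_2^2$ has a saddle at $0$, not a local minimum. The correct argument is the one you already use at regular points, pushed one step further: if $\de\debar\phi(p)$ were positive definite it would stay so on a neighbourhood of $p$, and that neighbourhood contains regular points of $\phi$ (otherwise $\phi$ is locally constant and $\de\debar\phi=0$, a contradiction); at such a nearby regular point the level hypersurface is strictly pseudoconvex, contradicting $(c)$. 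Your closing appeal to ``regular points being dense in $\overline K$'' is likewise unjustified as a global statement and should be replaced by this local reasoning; for the three mixed forms no density is needed anyway, since they vanish trivially wherever $\de\phi=0$.
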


An immediate consequence on $\Sigma_X$ is the following, obtained by assuming that the level set considered is regular.

\begin{cor}Let $X$ be a weakly complete manifold of dimension $\geq 2$ and $\phi:X\to\R$ be a $C^2$ plurisubharmonic exhaustion function. If there is a regular value $r$ such that $\Sigma_X$ contains a connected component of the corresponding level set, then there is $s<r$ such that $\Sigma_X$ contains a connected component of the set $\big\{x\in X\ :\ s\leq \phi(x)\leq r\big\}$.\end{cor}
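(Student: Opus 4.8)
The plan is to obtain the statement as a direct application of the Theorem; the role of the hypothesis that $r$ is a \emph{regular} value is precisely to upgrade the single datum ``$\Sigma_X$ contains a connected component of $\{\phi=r\}$'' into all of the hypotheses of that Theorem. Write $M_r=\{x\in X\ :\ \phi(x)=r\}$ and let $Y$ be the connected component of $M_r$ with $Y\subseteq\Sigma_X$. Since $r$ is a regular value, $M_r$ is a closed $\ccal^2$ real hypersurface of $X$, compact as a closed subset of the compact set $\{\phi\le r\}$; moreover, $\min\phi$ being a critical value of $\phi$ while $r$ is not, and $M_r\neq\emptyset$, one has $r>\min\phi$, every connected component of $M_r$ is open in $M_r$ — whence $Y$ is relatively open in $M_r$ — and $\de\phi$ vanishes nowhere on $M_r$, so $Y$ contains no local minimum point of $\phi$. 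Finally, $Y$ is a local maximum set: by the first Proposition the nonempty set $\Sigma_X\cap M_r$ has the local maximum property, and since $Y$ is open in $M_r$ with $Y\subseteq\Sigma_X\cap M_r\subseteq M_r$, the set $\Sigma_X\cap M_r$ coincides with $Y$ near each of its points; the local maximum property, being local, passes to $Y$. All the hypotheses of the Theorem thus hold for $\phi$, $r$, $Y$.

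The Theorem then produces $s<r$ and a connected component $K$ of $\{x\in X\ :\ s<\phi(x)<r\}$ with $Y\subseteq\de K\subseteq Y\cup\{x\in X\ :\ \phi(x)=s\}$, with $\overline K$ compact, and with the degeneracy conclusions $(b)$ and $(c)$. Set $L:=\overline K$: it is a connected compact subset of $\{x\in X\ :\ s\le\phi(x)\le r\}$ containing $Y$. I claim $L$ is a connected component of $\{x\in X\ :\ s\le\phi(x)\le r\}$, i.e. relatively open there. This is clear at the interior points of $L$ (in particular at the points of $K$); near a point $p\in Y$ one uses that $r$ is a regular value and $Y$ open in $M_r$: there $\{s\le\phi\le r\}$ is a manifold with boundary whose side $\{\phi<r\}$ lies in $K$ — because $K$ is an entire connected component of $\{s<\phi<r\}$ and meets every neighbourhood of $p$ — and whose boundary part lies in $Y$; the analogous description holds along $\de K\cap\{\phi=s\}$ by $\de K\subseteq Y\cup\{\phi=s\}$ (replacing $s$ by a nearby regular value if one wishes this last point to be entirely elementary). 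Hence $L$ is the required connected component, and it only remains to prove $L\subseteq\Sigma_X$.

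This last inclusion carries the real content. By the definition of the minimal kernel it asserts that \emph{no} smooth plurisubharmonic exhaustion of $X$ is strictly plurisubharmonic at a point of $L$, whereas the Theorem, applied to $\phi$, gives only that $\phi$ — and with it the forms of part $(b)$ — degenerates on $L$. The gap is bridged by the fact that the compact set produced by the propagation mechanism behind the Theorem is intrinsic to $X$ and is contained in $\Sigma_X$; this belongs to the circle of results of \cite{Slo-Tom}. Equivalently, one revisits the proof of the Theorem and checks that the degeneracy it delivers holds for an arbitrary smooth plurisubharmonic exhaustion and not merely for $\phi$. (A purely local argument is unavailable here: a plurisubharmonic function that is constant outside a small ball and strictly plurisubharmonic inside it cannot exist, by the maximum principle, so one cannot simply ``bump up'' $\phi$ near an offending point and re-apply the Theorem.) Granting $L\subseteq\Sigma_X$, the corollary is proved.

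I expect this final step to be the main obstacle: the Theorem controls only the single exhaustion $\phi$, while membership in $\Sigma_X$ quantifies over all plurisubharmonic exhaustions simultaneously, so one must either invoke the relevant construction of \cite{Slo-Tom} or reprove it — the delicate point in the latter case being to run the propagation comparison against an arbitrary competitor exhaustion while keeping $Y$ a relatively-open component of a regular level set. The rest — deriving the hypotheses of the Theorem from the regularity of $r$ and the first Proposition, and identifying $L$ as a connected component of $\{s\le\phi\le r\}$ — is routine.
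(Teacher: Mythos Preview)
Your approach matches the paper's: the authors present this Corollary with no proof beyond the single sentence that it is ``an immediate consequence [of the Theorem], obtained by assuming that the level set considered is regular.'' You carry out exactly this plan, verifying that regularity of $r$ supplies all the hypotheses of the Theorem (relatively open component, no local minima, local-maximum property via the first Proposition).

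You are, however, more careful than the paper, and you correctly isolate the one step that is \emph{not} literally immediate from the Theorem as stated: the inclusion $L\subseteq\Sigma_X$. Parts $(b)$ and $(c)$ of the Theorem control only the given exhaustion $\phi$ --- the vanishing of forms built from $\phi$, and the local-maximum property of the $\phi$-level slices of $K$ --- whereas membership in $\Sigma_X$ is a statement about \emph{every} smooth plurisubharmonic exhaustion. A compact local-maximum set need not lie in $\Sigma_X$ on the strength of that property alone (complex submanifolds are local-maximum sets, yet strictly plurisubharmonic functions certainly exist near them). Your diagnosis and proposed remedy are right: one must return to the construction in \cite{Slo-Tom}*{Theorem~3.9} and observe that the propagation argument there actually constrains arbitrary competing exhaustions, not just $\phi$. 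The paper simply elides this point under the word ``immediate''.

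A minor remark: your verification that $L=\overline K$ is an entire connected component of $\{s\le\phi\le r\}$ is already more than the paper offers, and your hedge about perturbing $s$ to a nearby regular value is the natural fix for the wobble at $\{\phi=s\}$. This is secondary to the main issue you flagged.
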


Gaining an understanding of the geometry of $\Sigma_X$ would give us some insight on the kind of obstructions that prevent a weakly complete space from being (a modification of) a Stein space.

\section{Compact complex curves.}\label{comphol}

As we saw, in complex surfaces, the obstruction to having a strictly plurisubharmonic exhaustion is linked to the presence of complex curves (embedded compact or immersed with compact closure). In this section, we want to collect and comment some results about the presence of compact curves in complex surfaces; there are two types of results we are interested in
\begin{itemize}
\item how the presence of a compact curve affects, in a neighborhood, the behavior of holomorphic and plurisubharmonic functions
\item which conditions guarantee the ``propagation'' of a compact curve.\end{itemize}
In other words, what properties ensure that a compact curve cannot be contained in a weakly complete surface? What does the presence of the curve tell us about the surface? When does the curve belong to a family of complex curves, e.g. the fibers of a proper map?

\subsection{The neighborhood of a compact curve}\label{nbd}
\subsubsection{Grauert Criterium} The first relevant result, in chronological order, is Grauert's investigation of \emph{exceptional curves} or, more generally, \emph{exceptional subspaces} of a complex space $X$.

Recall that a compact complex subspace $Y$ of $X$ is said to be {\em exceptional} if there is a proper holomorphic map $p:X\to X_0$ on a complex space $X_0$, such that $p(Y)$ is a point $x_0$ and $p_{\vert X\smallsetminus\, Y}:X\smallsetminus Y\to X_0\smallsetminus\{x_0\}$ is an isomorphism. By definition, exceptional subspaces are isolated.

A holomorphic vector bundle  $\fsf\to X$ is said to be {\em negative} if the zero section ${\bf 0}_{\fsf}$ of $\fsf$ has a strongly pseudoconvex neighborhood $U$, in particular $U$ is holomorphically convex. By Cartan theorem on quotient spaces \cite{Cartan} a neighborhod $U'\Subset U$ of ${\bf 0}_{\fsf}$ is proper on a Stein space $U_0$ and ${\bf 0}_{\fsf}$ is exceptional in $U$.

We have the following (see \cite{Gra}*{Satz 8})
\begin{teo*}[Grauert criterion] If the normal bundle $\nsf_{Y/X}$ is negative (i.e. $Y$ as 0-section of $\nsf_{Y/X}$ is exceptional) then $Y$ is exceptional in $X$.
\end{teo*}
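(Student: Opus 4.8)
The plan is to split the statement into two steps. First, I would use the negativity of $\nsf_{Y/X}$ to produce a strongly pseudoconvex neighborhood $D$ of $Y$ in $X$ in which $Y$ is the maximal compact analytic subset. Second, granted such a $D$, I would contract it onto a Stein space and glue the contraction to the identity on $X\setminus Y$ to obtain the required proper modification $p\colon X\to X_0$ collapsing $Y$ to a point.

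For the first step, which is where negativity really enters, I would argue as follows. By hypothesis the zero section of the total space of $\nsf_{Y/X}$ has a strongly pseudoconvex neighborhood, and on it we may fix a smooth function $\rho\ge 0$ vanishing exactly, and to second order, along the zero section, plurisubharmonic and strictly plurisubharmonic off the zero section, with the sublevel sets $\{\rho<\varepsilon\}$ relatively compact and strongly pseudoconvex for small $\varepsilon>0$. Cover $Y$ by finitely many charts $(U_\alpha;z_\alpha,w_\alpha)$ of $X$ adapted to $Y$, so that $Y\cap U_\alpha=\{w_\alpha=0\}$; then the first-order parts in $w$ of the transition maps of $X$ along $Y$ form exactly the transition cocycle of $\nsf_{Y/X}$, so that substituting the coordinate $w_\alpha$ for the fibre variable in $\rho$ gives local functions $\rho_\alpha$ on the $U_\alpha$ that agree on overlaps up to an error vanishing to third order along $Y$. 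Gluing the $\rho_\alpha$ by a partition of unity pulled back from $Y$ yields a smooth function $\tilde\rho$ near $Y$ in $X$, equal near each point of $Y$ to the model $\rho_\alpha$ plus an $O(|w|^3)$ term. Since $\tilde\rho$ is bounded below by a positive multiple of $|w|^2$, the sets $\{\tilde\rho<\varepsilon\}$ are relatively compact neighborhoods of $Y$ shrinking to $Y$ as $\varepsilon\to 0$; rescaling the fibre variable so that the level $\{\tilde\rho=\varepsilon\}$ stays at bounded distance from $Y$, the negativity of $\nsf_{Y/X}$ together with the compactness of $Y$ yields a uniform positive lower bound for the Levi form of the model along that level, while the $O(|w|^3)$ correction contributes to the Levi form only a term of strictly smaller order. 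Hence for $\varepsilon$ small $\tilde\rho$ is plurisubharmonic near $Y$, strictly plurisubharmonic off $Y$, with minimum value $0$ attained precisely along $Y$, and $D:=\{\tilde\rho<\varepsilon\}$ is a strongly pseudoconvex neighborhood of $Y$.

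That $Y$ is the maximal compact analytic subset of $D$ then follows from the maximum principle: if $A\subset D$ is a compact analytic subset of positive dimension, $\tilde\rho|_A$ is plurisubharmonic on the compact space $A$, hence constant; it cannot be constant equal to some value $c>0$, because $\{\tilde\rho=c\}$ is a strictly pseudoconvex real hypersurface, which contains no complex analytic curve; so $\tilde\rho|_A\equiv 0$ and $A\subset\{\tilde\rho=0\}=Y$. Now I would carry out the second step as in the excerpt: by Cartan's theorem on quotient spaces \cite{Cartan}, an open neighborhood $D'\Subset D$ of $Y$ is proper over a Stein space $D_0$ via a map $\pi$ that collapses the maximal compact analytic subset of $D'$, namely $Y$, to a point $x_0$ (a single point, since $Y$ is connected) and is biholomorphic on $D'\setminus Y$ onto $D_0\setminus\{x_0\}$. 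Gluing $D_0$ to $X\setminus Y$ along $D'\setminus Y\cong D_0\setminus\{x_0\}$ produces a complex space $X_0$, and the map $p$ equal to the identity on $X\setminus Y$ and to $\pi$ on $D'$ is proper, surjective and holomorphic with $p(Y)=x_0$ and $p$ biholomorphic off $Y$; thus $Y$ is exceptional in $X$.

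The step I expect to be the main obstacle is the construction of $D$: one has to turn the purely infinitesimal datum ``$\nsf_{Y/X}$ is negative'' into a genuine strongly pseudoconvex neighborhood of $Y$ in $X$, even though a neighborhood of $Y$ in $X$ need not be biholomorphic to one of the zero section in $\nsf_{Y/X}$ (the holomorphic formal principle can fail). The comparison between $X$ near $Y$ and the total space of $\nsf_{Y/X}$ near its zero section is therefore available only to finite order, and the resulting discrepancy must be absorbed by \emph{plurisubharmonic}, rather than holomorphic, corrections; the quantitative heart is the uniform transversal lower bound on the Levi form of the model coming from negativity, which dominates the finite-order error after rescaling. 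This is the content of Grauert's Satz~8 in \cite{Gra}.
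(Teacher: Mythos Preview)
The paper does not give its own proof of the Grauert criterion: it is quoted as a classical result with a bare reference to \cite{Gra}*{Satz 8}, so there is nothing in the paper to compare your argument against line by line. That said, your two-step outline---construct a strongly pseudoconvex neighborhood $D$ of $Y$ by perturbing the model strictly plurisubharmonic function on $\nsf_{Y/X}$, then apply the Remmert reduction (Cartan's theorem) to $D$ and glue---is the standard route and is essentially Grauert's own strategy.

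Your sketch is sound. A few remarks on presentation. First, you tacitly assume $Y$ is smooth when you pick adapted coordinates $(z_\alpha,w_\alpha)$ with $Y\cap U_\alpha=\{w_\alpha=0\}$; Grauert's statement covers arbitrary compact complex subspaces, but since the paper immediately specializes to nonsingular curves in surfaces this is harmless here. Second, the scaling step deserves a slightly sharper formulation than ``the $O(|w|^3)$ correction contributes to the Levi form only a term of strictly smaller order'': the smallest eigenvalue of $i\partial\bar\partial\rho$ for the model at $|w|\sim\delta$ is of order $\delta^2$ (coming from the base direction), not $O(1)$, so one must check that the relevant entries of the perturbed complex Hessian are dominated entrywise (the determinant picks up only an $O(\delta^3)$ correction against the model's $\delta^2$, and the trace an $O(\delta)$ correction against $O(1)$), rather than appeal to a single uniform lower bound. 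Third, ``Cartan's theorem'' here is really the Remmert reduction of the holomorphically convex neighborhood $D$; it is worth saying explicitly that strong pseudoconvexity of $bD$ makes $D$ holomorphically convex, so that the reduction exists and collapses precisely the maximal compact analytic subset, which you have already identified with $Y$.
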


If $X$ is a non-singular complex surface and $C$ is a compact, non-singular complex curve then $\nsf_{C/X}$ is negative if and only if the self-intersection $(C ^2)$ of $C$ is negative.

From Grauert criterium we deduce, in particular, that if $(C ^2)<0$ no compact (possibly singular) complex curve can be present near $C$.

The remaining cases $(C^2)>0$, $(C^2)=0$ were studied by Suzuki (see \cite{Suzuki}) and Ueda (see \cite{Ueda}) respectively.

We say that $C$ is {\em negative}, {\em positive}, {\em flat} if $(C^2)<0$, $(C^2)>0$, $(C^2)=0$, respectively.

\subsubsection{Positive curves}\label{zuki} The case of positive curves was studied by Suzuki; he obtained, as it may be expected, the opposite of Grauert's result (see \cite{Suzuki}*{Proposition 2.2}).
\begin{teo*}[Suzuki] If $X$ is a non singular complex surface and $C\Subset X$ is a compact non singular holomorphic curve such that $(C^2)>0$ then $C$ has a fundamental system of strongly pseudoconcave neighborhoods. In particular, $\ocal(X)=\C$.
\end{teo*}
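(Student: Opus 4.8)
The plan is to build, on a neighborhood of $C$, a function $\varphi$ which tends to $+\infty$ along $C$ and is strictly plurisubharmonic off $C$; the super-level sets $\{\varphi>c\}$, with $C$ added back, will then be the desired strongly pseudoconcave neighborhoods. Since $C$ is a smooth effective divisor, the line bundle $\ocal_X(C)$ is defined on a neighborhood $W$ of $C$ and carries the canonical holomorphic section $s$ with $\{s=0\}=C$; moreover $\ocal_X(C)|_C\cong\nsf_{C/X}$, so $\deg(\ocal_X(C)|_C)=(C^2)>0$. Hence $\nsf_{C/X}$ carries a smooth Hermitian metric of positive curvature, which extends to a smooth metric $h_0$ on $\ocal_X(C)$ over $W$; by naturality of the Chern curvature under restriction, the curvature $\Theta_{h_0}$ restricts on $C$ to a positive form in the directions tangent to $C$, but its normal component is a priori uncontrolled. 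To remedy this I would replace $h_0$ by $h:=h_0\,e^{-A\|s\|_{h_0}^2}$ for a large constant $A>0$: the curvature of $h$ is then $\Theta_{h_0}+A\,\de\debar\|s\|_{h_0}^2$, and a local computation along $\{s=0\}$ shows that $\de\debar\|s\|_{h_0}^2$ is there positive semidefinite with strictly positive normal component and vanishing tangential component. Consequently, for $A$ large enough (uniformly, by compactness of $C$) the curvature $\Theta_h$ is positive definite at every point of $C$, and by continuity $\Theta_h>0$ on a smaller neighborhood $W'$ of $C$.

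Now set $\varphi:=-\log\|s\|_h^2$ on $W'\setminus C$. Since $\log|s|^2$ is pluriharmonic where $s\neq 0$, one has $\de\debar\varphi=\Theta_h>0$ on $W'\setminus C$, while $\varphi(x)\to+\infty$ as $x\to C$. Shrinking $W'$ so that $\overline{W'}\Subset X$, for every sufficiently large regular value $c$ the set $U_c:=\{x\in W'\,:\,\varphi(x)>c\}\cup C$ is a relatively compact neighborhood of $C$ with smooth boundary $\{\varphi=c\}$; a standard compactness argument shows $\bigcap_c U_c=C$ and that the $U_c$ form a fundamental system of neighborhoods of $C$. Writing $U_c=\{c-\varphi<0\}$, the Levi form of the defining function $c-\varphi$ equals $-\Theta_h$, which is negative definite on the complex tangent space to $bU_c$: this is precisely the statement that $U_c$ is strongly pseudoconcave.

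For the last assertion, let $f\in\ocal(X)$. Its restriction to the compact curve $C$ is constant, say $\equiv a$, so $g:=f-a$ vanishes on $C$. If $g\not\equiv 0$, let $k\geq 1$ be the order of vanishing of $g$ along $C$; the image of $g$ in $\mathcal I_C^{\,k}/\mathcal I_C^{\,k+1}\cong(\nsf_{C/X}^{*})^{\otimes k}$ is then a nonzero holomorphic section of a line bundle of degree $-k(C^2)<0$ on the compact curve $C$, which has no nonzero section — a contradiction. Hence $g$ vanishes to infinite order along $C$, so $g\equiv 0$ by the identity principle and $f\equiv a$. (Alternatively, $\ocal(X)=\C$ follows from the extendability of holomorphic functions across a strongly pseudoconcave boundary, applied along $bU_c$.)

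The genuine point I expect to have to handle carefully is the passage from ``positive along the compact core $C$'' to ``positive on a full neighborhood of $C$'': a metric on $\ocal_X(C)$ has curvature positive along $C$ only tangentially, and in general $\ocal_X(C)$ need not be positive near $C$, so the correction $h_0\mapsto h_0 e^{-A\|s\|_{h_0}^2}$ — which uses the compactness of $C$ in an essential way — is what makes the argument run. The remaining ingredients (existence of a positively curved metric on a positive-degree bundle over a curve, choice of a regular level, relative compactness, cofinality of $\{U_c\}$) are routine.
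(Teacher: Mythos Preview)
The paper does not supply a proof of this theorem: it is quoted as Suzuki's result with a reference to \cite{Suzuki}*{Proposition 2.2}, and the consequence $\ocal(X)=\C$ is simply stated as following from the pseudoconcavity. So there is no ``paper's own proof'' to compare against.

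Your argument is correct and is essentially the standard one. The construction of the metric $h=h_0e^{-A\|s\|_{h_0}^2}$ is exactly the right device: the local computation you sketch (that $\de\debar\|s\|_{h_0}^2$ equals $e^{-\phi_0}\,dw\wedge d\bar w$ along $C=\{w=0\}$, in a trivialization where $\|s\|_{h_0}^2=|w|^2e^{-\phi_0}$) is accurate, and the $2\times2$ positivity condition $a(c+Ae^{-\phi_0})>|b|^2$ can indeed be achieved uniformly by compactness of $C$. The passage to a fundamental system $\{U_c\}$ of strongly pseudoconcave neighborhoods is then routine.

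For $\ocal(X)=\C$ you give two routes. The jet argument---a nonzero $g\in\ocal(X)$ vanishing to exact order $k$ along $C$ would yield a nonzero section of $(\nsf_{C/X}^*)^{\otimes k}$, of degree $-k(C^2)<0$---is clean and self-contained; the only tacit hypothesis is that $X$ is connected, which the paper assumes throughout. The alternative via extension across strongly pseudoconcave boundaries is what the paper's ``In particular'' is alluding to. Either is fine.
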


In this case many compact holomorphic curves can be present in an arbitrary neighborhood of $C$ (e.g. $\Pro^1\subset\Pro^2$); however, no nonconstant holomorphic or plurisubharmonic function exists in a neighborhood of $C$, by pseudoconcavity.

\subsubsection{Ueda paper}\label{Ue} In \cite{Ueda} Ueda considers compact non-singular complex curves $C$ which are flat, i.e. such that $\nsf_{C/X}$ is topologically trivial. It is well known that $\nsf_{C/X}$ is represented by an element of $H^1(C,\mathbb S^1)$, which then coincides with the {\em Jacobian variety} ${\rm Pic^0}(C)$.

The paper of Ueda is very deep. In order to state the main results of  \cite{Ueda}) we need some preliminary definitions

\begin{defi}A line bundle $\lsf\to C$ is said to be of {\em finite order $m$} if $\lsf^{-m}$ is holomorphically trivial  and $m$ is the minimum with this property. If no such $m$ exists,  $\lsf$ is said to be of {\em infinite order}. The {\em order} of $C$ is the order of its normal bundle $\nsf_{C/X}$. It is denoted by {\rm ord}(C) and we have $1\le {\rm ord}(C)\le+\infty$.\end{defi}

The next fundamental concept is the {\em type} of the curve $C$.

\medskip

Let $\{V_j\}_{1\le j \le m}$ be a covering of $C$
by bidiscs $V_j=\{\vert z_j\vert<1, \vert w_{j}\vert<1\}$,
where $z_j$ and $w_j$ are local holomorphic coordinates and $C_j:=C\cap V_j=\{w_j=0\}$.
Let $U_j=V_j\cap C$ and $\ucal=\{U_j\}_{1\le j \le m}$. Then the coordinates $w_{(j)}$ can be chosen in such a way that $\nsf_{C/X}$ can be represented by a cocycle
\begin{equation}\label{cocy}
t_{jk}=\big(w_j/w_k\big)_{\vert U_j\cap U_k}\in {\rm Z}^1(\ucal,\mathbb S)
\end{equation}
Let  $\{t_{jk}\}$, $\{w_j\}$ be fixed; then $w_j-t_{jk}w_k$ is vanishing on $U_j\cap U_k$ so $w_j-t_{jk}w_k=f_{jk}(z_j)w_j^{\nu+1}$ with $f_{jk}(z_j)\neq 0$ holomorphic. The system $\{w_j\}$ is then said of {\em type $\nu$}. One check easily that $\{f_{jk}\}$ is a cocycle with values in $\nsf_{C/X}^{-\nu}$: $\{f_{jk}\}\in {\rm Z}^1(\ucal,\nsf_{C/X}^{-\nu})$.

The cocycle $\{f_{jk}\}$ is called the $\nu^{\rm th}$ {\em obstruction} associated to the system $\{w_j\}$.

\begin{defi}The curve $C$ is said of {\em finite type $n$} ({\rm type}(C)=n) if there exists a system $\{w_j\}$ of type $n$ such that $n^{\rm th}$ obstruction associated to $\{w_j\}$ is not cohomologous to zero. The curve $C$ is said of {\em infinite type} (${\rm type}(C)=+\infty)$ if the obstruction associated to every system $\{w_j\}$ is cohomologous to zero.\end{defi}

The idea under the previous computations is to measure the degree of coincidence of the extension of $\nsf_{C/X}$ to a neighborhood of $C$ as a flat bundle and the line bundle $[C]$, corresponding to the divisor of $C$.

It is a simple matter to prove that the previous definition is well posed; namely, that

\begin{itemize}
\item[1)] If there exists a system of type $n$ whose $n^{\rm th}$ obstruction is not cohomologous to zero, then no system $\{w_j\}$ exists of type $\nu>n$.
\item[2)] If there exists a system of type $n$ whose $n^{\rm th}$ obstruction is not cohomologous to zero, for every system of type $\nu<n$ the $\nu^{\rm th}$ obstruction is cohomologous to $0$.
\item[3)] If the obstruction associated to every system $\{w_j\}$ is cohomologous to zero, then for every $\nu$ there exists a system of type $\nu$.
\item[4)] The type of a curve does depend neither on the covering $\ucal$ nor on the cocycle $\{t_{jk}\}$.
\end{itemize}

\begin{rem*}If the order is infinite, the type is infinite as well.\end{rem*}

The type and the order help us divide compact non singular complex curves into four classes:
\begin{itemize}
\item if ${\rm ord}(C)$ is finite and ${\rm type}(C)$ is finite, we say that $C$ belongs to Class $\alpha$
\item if ${\rm ord}(C)$ is finite, but ${\rm type}(C)$ is infinite, we say that $C$ belongs to Class $\beta'$
\item if ${\rm ord}(C)$ is infinite, then also ${\rm type}(C)$ is infinite and we identify two classes
\begin{itemize}
\item if there exist a neighborhood $V$ of $C$ and a holomorphic function $u$ on a covering manifold of $V$ such that $|u|$ defines a single-valued function on $V$ and the divisor of $u$ is $C$, then we say that $C$ belongs to Class $\beta''$
\item otherwise, we say that $C$ belongs to Class $\gamma$.
\end{itemize}
\end{itemize}

\medskip

The situation near a curve $C$ in Class $\alpha$ is summarized by the following results.
\begin{itemize}
\item [($\alpha_1$)]For every $n'>{\rm type}(C)$ there exists a neighborhood $V_0$ of $C$ and a strongly plurisubharmonic function $\Phi:V_0\smallsetminus C\to\R$ such that $\Phi(p)\sim {\rm dist}(p)^{-n'}$ as $p\to C$. It follows that, for $c\to+\infty$ the sets $\{\phi>c\}\cup C$ give a fundamental system of strongly pseudoconcave neighborhoods of $C$. In particular $\ocal(X)=\C$.
\item [($\alpha_2$)] Let $V$ an open (connected) neighborhood of $C$ and $\Psi:V\smallsetminus C\to\R$ a strongly plurisubharmonic function such that $\Psi(p)=o({\rm dist}(p)^{-n''})$, $0<n''<n$, as $p\to C$. Then $\Psi$ is constant near $C$. In particular, if $f\in\ocal(V\smallsetminus C)$ and $\log^+\vert f(p)\vert=o({\rm dist}(p)^{-n''})$ then $f$ is constant.
\end{itemize}

For the curves in Class $\beta'$, we have the following.
\begin{itemize}
\item [($\beta'$)] If ${\rm ord}(C)=m$, on a neighborhood $V$ of $C$ there is a multivalued holomorphic function $u$ such that $u^m\in\ocal(V)$ and whose divisor is $m\,C$.
\end{itemize}
Conversely, it is easy to show that if  there is a non constant holomorhic function on a neighborhood of $C$ then $C$ belongs to Class $\beta'$.

\smallskip

Finally, by definition, if $C$ belongs to Class $\beta''$
\begin{itemize}
\item [($\beta''$)] on a neighborhood $V$ of $C$ there is a multivalued holomorphic function $u$ such that $|u|$ is single-valued on $V$ and whose divisor is $m\,C$.
\end{itemize}

\medskip

We analyze more closely what happens near a curve of Class $\beta'$ or Class $\beta''$.

\smallskip

If $C\in {\rm Class}\,\beta'\cup{\rm Class}\,\beta''$ then there exists a multivalued function  holomorphic in a neighborhood $V$ of $C$. Let $\epsilon>0$ such that $V_\epsilon:=\big\{p\in V:\vert u(p)\vert>\epsilon\big\}\Subset V$. For all $r$ with $0<r<\epsilon$, the boundary ${\rm b}V_\epsilon$ of $V_\epsilon$ is defined by the pluriharmonic function $\log\vert u\vert-\log\epsilon=0$, so it is Levi flat. The neighborhoods $V_\epsilon{'s}$ are said {\em pseudoflat}.

Observe that, due to the presence of the non constant pluriharmonic function $\log\vert u\vert$ no neighborhood $W\Subset V$ can be strictly pseudoconcave. Moreover, if $C$ belongs to Class $\beta'$ for $\vert c\vert\le\epsilon^m$ the curves $u^m={\rm const}$ are compact and irreducible.

Suppose now that $C$ is in  Class $\beta''$ and let $\Sigma_r:=\{p\in V:\vert u(p)\vert>r\}\Subset V$, $0<r<\epsilon$. $\Sigma_r$ is Levi flat hence foliated by holomorphic curves. Then (see \cite{Ueda}*{Sections 2 and 3}) $u$ defines a holomorphic foliation $\fcal$ on $V$ such that every leaf of $\fcal$, except for $C$, is contained in some $\Sigma_r$ and dense in it. It follows that every plurisubharmonic function on a neighborhood of a $\Sigma_r$ is constant on $\Sigma_r$. Consequently, $\ocal(V)=\C$.

\subsubsection{Curves near $C$}\label{nbhcurve} Saving the same notations, let $V$ be a fixed tubular neighborhood of $C$ and $\Gamma$ a $2$-cycle. Since $\nsf_{C/X}$ is topologically trivial $\Gamma\sim mC$ for some $m\in\Z$, hence $(\Gamma,C)=0$. In particular, if $\Gamma\neq C$ is a compact, irreducible complex curve then $\Gamma\cap C=\emptyset$ i.e. $\Gamma\Subset V\smallsetminus C$.
\begin{itemize}
\item[i)] in view of $(\alpha_1)$, $C$ does not belong to  Class $\alpha$
\item[ii)] if $C$ belongs to Class $\beta'$ then by $(\beta'_1)$ the compact curves $u^m={\rm const}$ are the only ones belonging to $V\smallsetminus C$
\item[iii)] by similar arguments one proves that if $C$ is in Class $\beta''$ then in a neighborhood of $C$ there is no compact complex curve other than $C$.
\end{itemize}
The situation concerning the curves belonging to Class $\gamma$ is rather mysterious (see \cite{Ueda}).

\begin{rem} There are at most countably many  curves of Class $\alpha$ or Class $\beta''$; indeed, every such curve has an open neighborhood where no other curve is present, hence there can be at most countably may.
\end{rem}

\subsection{Propagation of compact curves}\label{propag}
 \subsubsection{Nishino's paper}\label{Nino}
In the paper \cite{Nishino} the existence of a non constant holomorphic function $f:X\to\C$ (and more generally of holomorphic maps $X\to R$ where R is Riemann surface) is proved under the condition that $X$ contains at least a {\em generic cuve}. Let us recall some preliminary results proved in \cite{Nishino}.

\medskip
We start from the definition of a generic curve: morally, a curve $S$ is called generic if it is locally ``movable'', i.e. if there are a neighborhood $U$ of $S$ and a holomorphic function $F\in\ocal(U)$ such that $\{x\in U\ :\ F(x)=0\}=S$. However, the original definition by Nishino is more involved.

Let $S\Subset X$ be a compact, non singular complex curve, $\{V_j\}_{1\le j \le m}$  a covering of $S$ by bidiscs $V_j=\{\vert z_{(j)}\vert<1, \vert w_{(j)}\vert<1\}$, where $z_{(j)}$, $w_{(j)}$ are local holomorphic coordinates and $S_j:=S\cap V_j=\{w_{(j)}=0\}$. Such a covering is called {\em canonical}.

\smallskip

Let ${\rm(}\pcal{\rm)}=\{f_j\}_{1\le m}$, $f_j:V_j\to\C$ meromorphic, be a datum for the additive Cousin problem on  $V:=\cup_{j=1}^mV_j$, so $f_{ij}=f_i-f_j\in\ocal(V_i\cap V_j)$ for all $i,\ j$, and let ${\rm(}\zcal{\rm)}=\{g_j\}_{1\le m}$, $g_j:V_j\to\C$ holomorphic, be a datum for the multiplicative Cousin problem on  $V$, so $g_{ij}=g_i/g_j\in\ocal(V_i\cap V_j)$ for all $i,\ j$.
\begin{itemize}
\item ${\rm(}\pcal{\rm)}$ is said to be {\em solvable} on $X$ if there exist holomorphic functions $\phi_j\in \ocal(S_j)$  such that ${f_{jk}}_{\vert S_j\cap S_k}=\phi_j-\phi_k$ for all $1\le j,k\le m$.
\item ${\rm(}\zcal{\rm)}$ is said to be {\em solvable} on $X$ if there exist nowhere vanishing holomorphic functions $\psi_j\in \ocal^*(S_j)$ such that ${g_{jk}}_{\vert S_j\cap S_k}=\psi_j/\phi_k$.
\end{itemize}

\begin{defi}The complex curve $S$ is said {\em generic} if
\begin{itemize}
\item[a)] every datum ${\rm(}\pcal{\rm)}=\{f_j\}_{1\le m}$ on $V$ which has $S$ as the only pole (i.e. for every $j$, $S\cap V_j$ is the polar set of the meromorphic function $f_j$) is solvable on $S$;
\item[b)] every datum ${\rm(}\zcal{\rm)}=\{g_j\}_{1\le m}$ on $V$ which has $S$ as the only zero (i.e. for every $j$, $S\cap V_j$ is the zero set of the holomorphic function $g_j$) is solvable on $S$;
\end{itemize}
\end{defi}
About the interplay between generic curves and holomorphic or meromorphic functions the main results of Nishino are the following

\begin{itemize}
\item[1)]A generic curve $S$  of a complex surface $X$ is the zero set of a function $f$ which is holomorphic on a neighborhood $U$ of $S$.
(see \cite{Nishino}*{Proposition 7}). In other words a generic curve propagates locally.
\item[2)] Assume that a domain $D\Subset X$ contains a non-countable family $\fcal$ of connected compact complex curves $S$ such that $S\cap S'=\emptyset$ whenever $S\neq S'$. Then $\fcal$ contains at least one generic curve (see \cite{Nishino}*{Proposition 9}).
\setcounter{tempo}{\value{enumi}}
\end{itemize}
 \noindent These results globalize via the study of the normal families of compact curves.
\begin{itemize}\setcounter{enumi}{\value{tempo}}
\item[3)] Let $X$ be a weakly complete or compact surface that contains at least one generic curve. Then there exist a Riemann surface $R$ and a meromorphic map $f:X\to\R$ with compact fibers.
\item[4)] If $X$ is weakly complete and contains at least one generic curve then it contains at least one nonconstant holomorphic function with compact fibers. In particular, $X$ is holomorphically convex.
\end{itemize}
For the proof of these results, see \cite{Nishino}*{Section 5}.

\medskip

It is clear, \emph{a posteriori}, that a curve is generic in the sense of Nishino if and only if it belongs to Ueda's Class $\beta'$. Moreover, using the solvability of the two Cousin's problems along $S$, it is easy to show that a generic curve has to be flat, of infinite type and finite order, hence it has to belong to Ueda's Class $\beta'$.

\subsubsection{Ohsawa's paper and its generalization}\label{Ohsawa}
In \cite{Ohsawa} Ohsawa proved the following result (see \cite{Ohsawa}*{Proposition 1.4}.
\begin{teo}
Let $X$ be a \(connected\,\)  weakly complete  non singular complex surface such that $\ocal(X)\supsetneqq\C$. Then $X$ is holomorphically convex.
\end{teo}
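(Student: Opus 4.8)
The plan is to read off the global shape of $X$ from its minimal kernel $\Sigma_X$ and to show that a single nonconstant holomorphic function already confines $\Sigma_X$ to one of two harmless situations. Fix a $\ccal^\infty$ minimal function $u$ (these exist, and $\Sigma_X$ is closed, by \cite{Slo-Tom}). If $\Sigma_X=\emptyset$ then $u$ is a smooth strictly plurisubharmonic exhaustion and $X$ is Stein, hence holomorphically convex; if $\Sigma_X\ne\emptyset$ but compact, then $u$ is strictly plurisubharmonic outside a compact set, $X$ is a strongly pseudoconvex ($1$-convex) space, and again it is holomorphically convex, by Grauert's solution of the Levi problem for such spaces. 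So from now on $\Sigma_X$ is non-compact.

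I would then split according to whether $X$ contains a compact complex curve. If it does, say $C\Subset X$, then since $f$ is nonconstant the level set $\{f=c\}$ is a proper analytic subset of $X$ for every $c$, so $f$ is not constant on any neighbourhood of $C$; thus a nonconstant holomorphic function exists near $C$, and by the converse direction of Ueda's analysis (Section~\ref{Ue}) the curve $C$ belongs to Class $\beta'$, i.e.\ it is a generic curve in Nishino's sense. By Nishino's theorems (Section~\ref{Nino}) a weakly complete surface carrying a generic curve admits a nonconstant holomorphic function with compact fibres and is therefore holomorphically convex; this settles the case with a compact curve.

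The remaining, and hardest, case is to rule out, assuming $\ocal(X)\supsetneqq\C$, the ``Grauert-type'' possibility that $\Sigma_X\ne\emptyset$ while $X$ contains no compact curve. Pick $p\in\Sigma_X$ lying on a regular level set $\{u=c\}$ and set $\Sigma_c=\Sigma_X\cap\{u=c\}$. By the two Propositions of Section~\ref{prelim}, $\Sigma_c$ is a nonempty compact set with the local maximum property which, near each regular point of $\{u=c\}$, is a union of graphs of holomorphic discs; and, as $X$ has no compact curve, no leaf through such a disc is compact. Using Zorn's lemma I would pass to a minimal nonempty compact subset $\mcal\subseteq\Sigma_c$ that is a union of such discs; then every leaf of $\mcal$ is non-compact and dense in $\mcal$. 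Now for any $g\in\ocal(X)$ the plurisubharmonic function $\log|g|$ attains its maximum over the compact set $\mcal$ at an interior point of some leaf $L$, so by the maximum principle on the Riemann surface $L$ it is constant there, hence on $\overline L=\mcal$. Thus every holomorphic function is constant on $\mcal$; but $\mcal$ is not contained in any proper analytic subset of $X$, because such a subset would be a curve containing the dense leaf $L$, hence containing $\mcal$ and therefore compact, which is excluded. Consequently no level set of $g$ is a proper analytic subset, i.e.\ $g$ is constant on $X$; as $g$ was arbitrary, $\ocal(X)=\C$, a contradiction. Together with the first two paragraphs this proves the theorem.

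The crux is this last step: extracting, from the purely local picture of the slices of $\Sigma_X$ --- compact local maximum sets that are unions of holomorphic discs --- the fact that a nonconstant holomorphic function is incompatible with the dense-leaf (Grauert-type) behaviour. This is where the local maximum property has to be used at full strength, together with the structure theorem for the slices and with part~(b) of the propagation theorem, which forces the slices to be Levi-flat and bounds the rank of $\de\debar u$ along the leaves so as to exclude a pluriharmonic degeneration; one also has to absorb possible exceptional minimal sets of the resulting lamination into the same maximum-principle argument and to justify that an analytic subset containing $\mcal$ must be compact. Everything else is bookkeeping with the cited results of Grauert, Ueda and Nishino and with the elementary properties of the minimal kernel. (Once it is available, the classification of weakly complete surfaces in \cite{Mon-Slo-Tom} yields the statement immediately, the Grauert-type surfaces being exactly those with $\ocal(X)=\C$.)
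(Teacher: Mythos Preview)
The paper does not itself prove this statement; it is quoted from Ohsawa \cite{Ohsawa}, and the only indication of a proof is the Proposition recorded immediately afterwards: for nonconstant $f\in\ocal(X)$, either every set $f^{-1}(z)\cap\{\phi<c\}$ is noncompact (or empty), or every such set is compact. The point is then that in dimension~$2$ the fibers of $f$ are curves, hence Stein exactly when noncompact, and Ohsawa derives holomorphic convexity directly from this dichotomy. Your minimal-kernel strategy is therefore a genuinely different route---but it has real gaps.

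The sharpest one is in the ``compact curve'' case. Ueda's classification, and in particular the converse you invoke, applies only to \emph{flat} curves. A negative curve is exceptional in Grauert's sense and certainly admits nonconstant holomorphic functions nearby (blow it down), yet it is neither flat nor generic \`a la Nishino; so the possibility ``$\Sigma_X$ non-compact and every compact curve in $X$ negative'' is covered neither by your Step~3 nor by your Step~4 (where you assume no compact curve at all). Separately, in Step~4 the claim that an analytic set containing $\mcal$ must be compact is unjustified: $\{g=a\}$ may well be a non-compact curve containing the compact set $\mcal$, the leaf $L$ being only open in its irreducible component; showing it is also closed there requires a local lamination argument you have not supplied. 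Finally, the closing appeal to \cite{Mon-Slo-Tom} does not help, since that classification requires a real-analytic exhaustion, which is not assumed here and cannot be obtained in general (cf.\ Section~\ref{Brunella}).
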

This result was generalized to weakly $1$-complete complex surfaces in \cite{Tomassini-VA}.

The main tool used by Ohsawa was an observation on the topology of the level sets of a holomorphic function, which holds in every dimension, not only for surfaces (see \cite{Ohsawa}*{Theorem 1.1}).
\begin{pro}
Let $X$ be a weakly complete manifold, with a plurisubharmonic exhaustion function $\phi:X\to\R$,  and let $f\in\ocal(X)$ be a non-constant holomorphic function, then either
\begin{itemize}
\item[i{\rm)}] $f^{-1}(z)\cap\{x\in X\ :\ \phi(x)<c\}$ is empty or noncompact for all $z\in\C$ and $c\in\R$
\setcounter{tempo}{\value{enumi}}\end{itemize}
\noindent or
\begin{itemize}\setcounter{enumi}{\value{tempo}}
\item[ii{\rm)}]  $f^{-1}(z)\cap\{x\in X\ :\ \phi(x)<c\}$ is compact for all $z\in\C$ and $c\in\R$.
\end{itemize}
\end{pro}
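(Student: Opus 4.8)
My plan is to reduce the dichotomy to a statement about the connected components of the fibres of $f$ and then to establish that statement by an open‑closed argument. Throughout write $X_c:=\{x\in X:\phi(x)<c\}$; since $\phi$ is an exhaustion, each $X_c$ is relatively compact in $X$. The two alternatives are mutually exclusive (if both held, $f^{-1}(z)\cap X_c$ would be empty for all $z,c$, contradicting $\bigcup_{z,c}(f^{-1}(z)\cap X_c)=X$), so it is enough to assume that (i) fails and to deduce (ii). Suppose then that $f^{-1}(z_0)\cap X_{c_0}$ is nonempty and compact for some $z_0,c_0$. This set is open in the analytic set $f^{-1}(z_0)$, hence locally connected, and being compact it has finitely many connected components, each compact, hence closed in $X$, hence itself a connected component of $f^{-1}(z_0)$ lying in $X_{c_0}$; thus $f^{-1}(z_0)$ has a compact connected component $\Gamma$. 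By the maximum principle $\phi$ is constant on the compact connected complex space $\Gamma$, and conversely, since $\phi$ is an exhaustion, any connected component of a fibre on which $\phi$ is constant is a closed subset of a compact level set of $\phi$, hence compact. Writing $\Gamma_x$ for the connected component of $f^{-1}(f(x))$ through $x$, it follows that
\[
W:=\{x\in X:\Gamma_x\ \text{is compact}\}=\{x\in X:\phi\ \text{is constant on}\ \Gamma_x\}
\]
is nonempty, because $\Gamma\subseteq W$.

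I would next check that $W=X$ forces conclusion (ii), so that — $X$ being connected — it suffices to prove that $W$ is open and closed. The implication is easy: given $z,c$, the compact set $\overline{X_c}$ meets only finitely many connected components of $f^{-1}(z)$ (the irreducible components of an analytic set form a locally finite family), and if $W=X$ each such component $\Gamma'$ is compact with $\phi|_{\Gamma'}$ constant, so $\Gamma'\cap X_c$ equals $\Gamma'$ or is empty; hence $f^{-1}(z)\cap X_c$ is a finite union of compact sets. Openness of $W$ is also routine: if $x_1\in W$ and $z_1=f(x_1)$, then the compact connected component $\Gamma_{x_1}$ of $f^{-1}(z_1)$ has an open neighbourhood $N\subseteq f^{-1}(D)$, for a suitable disc $D\ni z_1$, such that $f\colon N\to D$ is proper; then for each $z\in D$ the compact set $f^{-1}(z)\cap N$ has all its connected components compact, hence closed in $X$, hence connected components of $f^{-1}(z)$ contained in $N$, so that the component of $f^{-1}(f(x))$ through any $x\in N$ lies in $N$ and is compact. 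Hence $W\supseteq N$, a neighbourhood of $x_1$.

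The hard part will be the closedness of $W$, i.e.\ showing that if $\Gamma_{x_*}$ is non-compact then $\Gamma_x$ is non-compact for all $x$ near $x_*$. The obstacle is that connected components of the fibres of $f$ can merge in a limit, so a priori several compact components $\Gamma_{x_n}$ of nearby fibres might accumulate onto a single non-compact component $\Gamma_{x_*}$; excluding this is the heart of the matter, and it is where the holomorphy of $f$ is essential. The mechanism I would invoke is that a compact connected component $\Gamma$ of a fibre is, near $\Gamma$, the zero set of the nonconstant holomorphic function $f-z_0$, so $\Gamma$ admits no fundamental system of pseudoconcave neighbourhoods and has normal bundle of finite order; by Ueda's analysis $\Gamma$ then belongs to Class~$\beta'$, i.e.\ is a generic curve in the sense of Nishino. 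Nishino's propagation results (Section~\ref{Nino}, \cite{Nishino}) rule out the degenerate merging; in fact, in the surface case, they yield (ii) directly: one gets a proper holomorphic map $g\colon X\to R$ onto a Riemann surface with compact curve fibres, through which $f$ factors as $f=h\circ g$ with $h$ nonconstant meromorphic, and since $h^{-1}(z)$ is discrete while $g(X_c)$ is relatively compact, $f^{-1}(z)\cap X_c=g^{-1}\bigl(h^{-1}(z)\cap g(X_c)\bigr)$ is a finite union of compact $g$-fibres, hence compact. In higher dimension one would run the open-closed argument directly, replacing the appeal to generic curves by the corresponding stability statement for compact components of fibres of holomorphic maps — and it is precisely this point that I expect to require the main work.
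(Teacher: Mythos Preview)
The paper does not supply a proof of this proposition; it is quoted as \cite{Ohsawa}*{Theorem 1.1} and used as a black box, so there is no argument in the paper to compare your approach against.

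On the merits of your proposal: the reduction to an open--closed argument on $W=\{x:\Gamma_x\ \text{compact}\}$ is natural, and your openness step via a local properness neighbourhood $f|_N:N\to D$ is correct. The genuine gap is precisely where you locate it, namely closedness of $W$. Your surface route through Ueda and Nishino can be salvaged but needs repair and is disproportionate. First, Ueda's classification applies only to \emph{smooth} curves, whereas a connected component $\Gamma$ of $f^{-1}(z_0)$ may be singular; this is fixable, since your openness argument already produces a whole disc of values with compact fibre-components, and by Sard a generic value in that disc has smooth fibre, so you may replace $\Gamma$ by a nearby smooth compact curve. Second, invoking Nishino's structure theorem to obtain a proper map $g:X\to R$ is using a result considerably deeper than the lemma you are trying to prove; in the logical flow of the paper Nishino's theorem sits downstream of Ohsawa's, not upstream, so while the argument is not circular it is certainly roundabout.

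More seriously, the proposition is stated for weakly complete manifolds of \emph{arbitrary} dimension, and here your proposal is not a proof. Your final sentence --- ``replacing the appeal to generic curves by the corresponding stability statement for compact components of fibres of holomorphic maps'' --- names the difficulty rather than resolves it: that stability statement \emph{is} the closedness of $W$, and nothing in your write-up excludes the merging phenomenon you yourself describe. As written, the proposal establishes the result only for surfaces (and there by an unnecessarily heavy detour), not in the generality claimed.
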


The key fact to get the first result from this observation is that if $X$ is of dimension $2$, the fibers of $f$ are of dimension $1$, hence they are Stein if and only if they are noncompact.

\medskip

Observe that the condition $\mcal(X)\supsetneqq\C$ for weakly complete surface $X$ does not imply that $X$ is holomorphically convex (take for $X$ the surface $U$ of the Example \ref{Giuliana} below).

In higher dimension the existence of one holomorphic is not enough to grant holomorphic convexity. As a trivial example take $Z=X\times Y$ where $X$ is again the surface $U$ of the Example \ref{Giuliana} and $Y$ is a Stein curve; we think the following statement could be a suitable generalization of Ohsawa's theorem in higher dimensions.
\begin{conj*}
Let $Z$ be a weakly complete complex space of dimension $n+1$, $n\ge 1$, and $f_1,\ldots,f_n\in\ocal(Z)$ analytically independent {\rm(}i.e. $df_1\wedge\ldots\wedge df_n\neq 0${\rm)} holomorphic functions. Then $Z$ is holomorphically convex.
\end{conj*}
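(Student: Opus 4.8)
\emph{Strategy and a dichotomy.} The plan is to argue by induction on $n$. For $n=1$ the statement is Ohsawa's theorem (the first Theorem of Section~\ref{Ohsawa}), in the version for complex spaces proved in \cite{Tomassini-VA}. So assume it for $n-1$, let $\phi\colon Z\to\R$ be a plurisubharmonic exhaustion and $f=(f_1,\dots,f_n)\colon Z\to\C^n$ be holomorphic with $df_1\wedge\cdots\wedge df_n\not\equiv 0$, put $A=\{df_1\wedge\cdots\wedge df_n=0\}\cup\mathrm{Sing}(Z)$ (a nowhere dense analytic subset, on whose complement $f$ is a submersion with smooth $1$-dimensional fibres), and write $Z_c=\{\phi<c\}$, so $Z=\bigcup_c Z_c$ with $\overline{Z_c}$ compact. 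The first step is to upgrade the topology observation of Section~\ref{Ohsawa} (Ohsawa's Proposition, valid in all dimensions) from a single function to the map $f$, obtaining a dichotomy: either \emph{(A)} $f^{-1}(w)\cap Z_c$ is empty or noncompact for all $w\in\C^n$, $c\in\R$, or \emph{(B)} $f^{-1}(w)\cap Z_c$ is compact for all $w,c$. I would deduce this by applying the scalar statement to the generic linear combinations $\lambda\cdot f=\lambda_1f_1+\cdots+\lambda_nf_n$, whose fibres contain those of $f$: if Ohsawa's alternative (ii) holds for one $\lambda$ it holds for all, yielding (B); otherwise, using that the generic fibre of $f$ is $1$-dimensional and the $Z_c$ are relatively compact in $Z$, one gets (A). Throughout, the ``thin'' set of values $w$ over which $f^{-1}(w)$ is singular or of dimension $\ge 2$ (and possible compact irreducible components of otherwise noncompact fibres) must be handled: it lies over a proper analytic subset of $f(Z)$, with preimage inside $A$, and after removing a suitable lower-dimensional analytic subset of $Z$ one reaches a clean version of the dichotomy. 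It then suffices to show that in case (A) every $Z_c$ is Stein, and in case (B) that $Z$ is proper over a holomorphically convex space.

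\emph{Case (B): compact fibres.} Here the generic fibre $C_w=f^{-1}(w)$ is a compact complex curve, distinct generic fibres are disjoint, and $Z$ carries an $n$-parameter family of such curves. Running the higher-dimensional analogue of Nishino's propagation argument (Propositions~7 and~9 of Section~\ref{Nino}) — i.e.\ selecting the irreducible component of the Barlet cycle space of $Z$ whose generic point is such a $C_w$ and passing to the associated reduction — one should obtain a proper surjective holomorphic map $p\colon Z\to S$ with connected fibres onto a normal complex space $S$ of dimension $n$, through which $f$ factors: $f=g\circ p$ with $g\colon S\to\C^n$ holomorphic and $dg_1\wedge\cdots\wedge dg_n\not\equiv 0$. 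By the maximum principle $\phi$ is constant on the compact fibres of $p$, hence descends to a plurisubharmonic exhaustion of $S$; so $S$ is weakly complete of dimension $n=(n-1)+1$, and $g_1,\dots,g_{n-1}$ are analytically independent on it. By the inductive hypothesis $S$ is holomorphically convex, and, $p$ being proper, so is $Z$.

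\emph{Case (A): Stein fibres.} Now the generic fibre of $f$ is a noncompact reduced complex space of pure dimension $1$, hence Stein. On $Z\setminus A$ the Levi form of $\chi(\phi)+|f_1|^2+\cdots+|f_n|^2$, with $\chi$ convex increasing, dominates $\de\debar(|f_1|^2+\cdots+|f_n|^2)$, which is positive definite on the $n$ directions transverse to the fibres of $f$; so strict plurisubharmonicity can fail only along the one-dimensional fibre directions. Using the Steinness of the fibres of $f$ and the relative compactness of $Z_c$ in $Z$, I would build on each $Z_c$ a plurisubharmonic function that is strictly plurisubharmonic along the fibres of $f$ — this is the ``Steinness of a Stein fibration'' mechanism — and add it to the function above to obtain a strictly plurisubharmonic exhaustion of $Z_c$; then each $Z_c$ is Stein, and $Z$, being an increasing union of Stein open subsets, is Stein, a fortiori holomorphically convex. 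In the language of Section~\ref{prelim}, this amounts to proving $\Sigma_Z=\emptyset$, using that in case (A) the part $\Sigma_Z\setminus A$ is contained in the union of the Stein fibres of $f$, together with the local maximum property of the slices of $\Sigma_Z$.

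\emph{Main obstacle.} The crux is Case (A): turning ``the fibres of $f$ are Stein'' into ``$Z$ is Stein''. Since $f$ is \emph{not} proper over $\C^n$, the classical theorems on Stein fibrations do not apply directly, and one must exploit the global exhaustion $\phi$ and the relative compactness of the $Z_c$ to patch the fibrewise constructions, while simultaneously controlling the degeneracy locus $A$, the higher-dimensional fibres, and any compact fibre-components. The two subsidiary difficulties — the vector-valued dichotomy (A)/(B), and in Case (B) the passage of Nishino's cycle-theoretic propagation of compact curves from surfaces to dimension $n+1$ through Barlet's theory of cycle spaces — look more routine but still need to be carried out; the latter is precisely the natural higher-dimensional form of the results recalled in Section~\ref{Nino}.
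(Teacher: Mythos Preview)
The statement you are trying to prove is presented in the paper as a \emph{conjecture}, not a theorem: there is no proof of it in the paper, and the authors explicitly offer it as a plausible higher-dimensional generalization of Ohsawa's theorem, motivated by the trivial product counterexample $Z=X\times Y$ showing that a single holomorphic function does not suffice. So there is nothing to compare your attempt against; your proposal is an attack on an open problem.

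As for the attack itself, it is a reasonable outline but has genuine gaps at each of the three steps, which you partly acknowledge. First, the dichotomy (A)/(B) is not established: applying Ohsawa's scalar alternative to $\lambda\cdot f$ does give (B) when some $\lambda$ falls in case~(ii), but when every $\lambda$ falls in case~(i) you only learn that the \emph{hypersurfaces} $\{\lambda\cdot f=\mathrm{const}\}\cap Z_c$ are noncompact, which does not preclude a compact $1$-dimensional fibre of $f$ sitting inside such a noncompact hypersurface; your appeal to ``the generic fibre is $1$-dimensional and $Z_c$ is relatively compact'' does not bridge this. Second, in Case~(B) you invoke a higher-dimensional version of Nishino's propagation via Barlet's cycle space, but Nishino's arguments in \cite{Nishino} are genuinely two-dimensional (the notion of ``generic curve'' and the Cousin-problem criterion are surface-specific), and you give no indication of how the cycle-space component yields a \emph{proper} map $p$ onto a space $S$ of dimension exactly $n$ with the factorization $f=g\circ p$. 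Third, and most seriously, you yourself flag Case~(A) as the crux: producing a global function on $Z_c$ that is strictly plurisubharmonic along the (non-proper) Stein fibres of $f$ is precisely the missing ingredient, and neither the local-maximum property of $\Sigma_Z$ nor the transverse positivity of $\sum|f_j|^2$ supplies it. These obstacles are exactly why the statement remains a conjecture.
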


\section{Weakly complete surfaces}

In this section we look into the geometry of weakly complete surfaces. As we saw, in dimension two the minimal kernel carries a natural analytic structure and the presence of compact curves affects quite heavily the global geometry of the surface.

As we will see in the next pages, however, there are examples where the minimal kernel is not composed of compact curves, but of immersed complex curves with compact closure or, more precisely, of Levi-flat $3$-dimensional hypersurfaces whose Levi foliation has dense complex leaves. This kind of phenomenon does not have the same ``propagation'' property as the presence of compact curves, therefore we need some hypothesis that ensures us that we can extend this information from a level set to the whole surface. This is why we ask for the existence of a \emph{real-analytic} plurisubharmonic exhaustion function and, under such hypothesis, we prove a classification result.

This hypothesis is not always verified, as an example taken from \cite{Brunella} shows. In that case, our classification holds where the exhaustion can be taken to be real analytic. We also study the case of coronae, where our classification result carries over with minimal modifications.

\subsection{Examples and remarks}\label{examples} We present some examples of weakly complete surfaces, studying their geometry and the presence of holomorphic and meromorphic functions.

\begin{exa}\label{exa3.1}
Let $a_1 a_2$ be complex numbers such that
$$
0<\vert a_1\vert\le\vert a_2\vert<1 ,\>\>a_1^k\neq a_2^l
$$
for all $(k,l)\in\N^2\smallsetminus\{(0,0)\}$ and define $\tau$ by $\vert a_1\vert=\vert a_2\vert^\tau$; by hypothesis $\tau\notin\Q$.

Consider on $\C^2\smallsetminus\{(0,0)\}$ the equivalence relation $\sim$: $(z_1,z_2)\sim (a_1z_1,a_2z_2)$. The quotient space $\C^2\smallsetminus\{(0,0)\}/\sim$ is the Hopf manifold $\mathcal H$. Let $\pi$ denote the projection $\C^2\smallsetminus\{(0,0)\}\rightarrow\mathcal H$. The complex lines $\C_{z_1}=\{z_2=0\}$, $\C_{z_2}=\{z_1=0\}$ project into complex compact curves $C_1$, $C_2$ respectively. We note that the curves $C_1$ and $C_2$ are the only compact complex curves in $\hcal$.

Let $X=\hcal\smallsetminus C_2$. The function
$$
\Phi(z_1,z_2)=\frac{\vert z_2\vert^{2\tau}}{\vert z_1\vert^2}
$$
on $\C^2\smallsetminus\{(0,0)\}$ is $\sim$-invariant and so defines a function $\phi:X\rightarrow\R_{\{\ge 0\}}$; $\phi$ is proper and $\log\,\phi$ is pluriharmonic on $X\smallsetminus C_1$.The level sets $\{\phi=c\}$ contained in $X\smallsetminus C_1$ are the projections of the sets $\vert z_1\vert=c\vert z_2\vert^\tau$, $c>0$, and so foliated by the projections of the complex sets $\{z_1=c{\rm e}^{i\theta} z_2^\tau\}$ which are everywhere dense leaves, $\tau$ being irrational. If $f\in\ocal(X)$ then, by the maximum principle, $f$ is constant on a set $\{\phi=c\}$ which is of dimension $3$ and this implies that $f$ is constant on $X$. In the same way one shows that no strongly plurisubharmonic function exists on $X$. We also have $\mcal(\hcal)=\C$. Indeed let $f\in\mcal(\hcal)$ and $\tilde f=f\circ\pi$: $\tilde f\in \mcal(\C^2\smallsetminus\{(0,0)\})$ so it extends as meromorphic function on $\C^2$. Then $\tilde f=P/Q$ with $P,Q\in\ocal(\C^2)$. Set
\begin{equation*}
P(z_1,z_2)=\sum_{j,k=0}^\infty P_{jk}z^j_1z^k_2,\quad Q(z_1,z_2)=\sum_{\alpha,\beta=0}^\infty Q_{\alpha\beta}z^\alpha_1z^\beta_2.
\end{equation*}
Because of the $\sim$-invariance
\begin{equation*}
P(a_1z_1,a_2z_2)Q(z_1,z_2)=P(z_1,z_2)Q(a_1z_1,a_2z_2)
\end{equation*}
and from this we derive the identities
\begin{equation*}
P_{jk}Q_{\alpha\beta}a^j_1a^k_2=P_{jk}Q_{\alpha\beta}a^\alpha_1a^\beta_2
\end{equation*}
for all $j,k,\alpha,\beta\in\N$. Since, by hypothesis, $a_1^k\neq a_2^l$
for all $(k,l)\in\N^2\smallsetminus\{(0,0)\}$ we get $P_{jk}=0$ for $j+k>0$, $Q_{\alpha\beta}=0$ for $\alpha+\beta>0$ i.e. $P={\rm const}$, $Q={\rm const}$.

We can also prove that $X$ does not carry meromorphic function. Indeed, let $f\in\mcal(X)$ and $\tilde f=f\circ\pi$: $\tilde f\in \mcal(\C^2\smallsetminus \{z_1=0\})$. Then, since $\C^2\smallsetminus \{z_1=0\}$ is Stein
\begin{equation*}
\tilde f(z_1,z_2)=\sum_{j=0}^\infty \tilde f_j(z_1,z_2)z^{-j}_1
\end{equation*}
with $\tilde f_j(z_1z_2)$ entire. Again, using the $\sim$-invariance of $\tilde f$, we conclude that $\tilde f_j=0$ for $j>0$.\vskip.5truecm
\end{exa}

The curve $C_1$ is the minimum set of $\phi$ and it is the only compact curve of $X$. Since $\ocal(X)=\C$ the line bundle associated to $C_1$ is not trivial.
\begin{exa}\label{exa3.2}
With the notation of the previous example, we consider $X_1=\mathcal{H}\setminus(C_1\cup C_2)$ with plurisubharmonic exhaustion function $\alpha=(\log\phi)^2$. $X_1$ is a weakly complete surface, obviously of Grauert type. Here, however, the plurisubharmonic function $\alpha_1$ has a 3-dimensional minimum set, namely the quotient of the Levi-flat surface of $(\C^*)^2$ given by
$$H_0=\{(z_1,z_2)\in(\C^*)^2\ :\ |z_2|^\tau=|z_1|\}\;.$$
The pluriharmonic function on $X_1$ is, obviously, $\log(\phi)$, i.e. a befitting choice of the square root of $\alpha_1$.
\end{exa}
Another class of example is provided by total spaces of some complex line bundles over compact Riemann
surfaces (see also \cite{Ueda}).
\begin{exa}\label{exa3.3}
Let $M$ be a compact Riemann surface of genus $g>0$. It is well known that every topologically trivial line bundle can be represented by a flat unimodular cocycle, i.e. an element of $H^1(M, \mathbb S^1)$.

Consider a line bundle $L\to M$ with trivialization given by the open covering $\{U_j\}_{j=1}^n$ and transition functions $\{\xi_{ij}\}_{i,j}$ which represent a cocycle $\xi\in H^1(M,\mathbb S^1)$. We can define a function $\alpha:L\to \R$ by defining it on each trivialization as $\alpha_j:U_j\times\C$, $\alpha_j(x,w)=|w|^2$. As $|\xi_{ij}|=1$, these functions glue into $\alpha:L\to\R$, which is readily seen to be plurisubharmonic and exhaustive.

Now, consider $r>0$ and the section $f_1\in\Gamma(U_1, \xi)$ given by $f(x)\equiv r$ for all $x\in U_1$; taking all possible analytic continuations of $f_1$ as a section of the bundle $L$, we construct , for every chain  $\{U_{j_k}\}_{k\in\N}$ with $j_0=1$ and $U_{j_{k}}\cap U_{j_{k+1}}\neq\varnothing$, the sections
$f_k\equiv\xi_{j_{k}j_{k-1}}\xi_{j_{k-1}j_{k-2}}\cdots\xi_{j_1j_0}r\in\Gamma(U_{j_k}, \xi)\;.$
Representing $\xi$ as a multiplicative homomorphism $\psi_\xi:\pi_1(M)\to \mathbb S^1$, it is easy to see that the graphs of such sections glue into a compact complex manifold
if and only if $\psi(\pi_1(M))$ is contained in the roots of unity, i.e. if and only if $L^{\otimes n}$ is (analytically) trivial for some $n$, i.e. if and only if $\xi$ (as an element of the group $H^1(M,\mathbb S^1)$) is unipotent.

If that is not the case, the graphs of such sections glue into an imbedded, non closed, complex manifold, contained in  the Levi-flat hypersurface $\alpha^{-1}(r^2)$ and dense in it. The other leaves of the Levi foliation are obtained by the one constructed multiplying it by $e^{i\theta}$.

Finally, we have a pluriharmonic function $\chi:L\setminus M\to\R$ given by $\chi(p)=\log\alpha(p)$.

\end{exa}
\begin{exa}\label{Giuliana}
Let $\Gamma\Subset\C\times\R$ be the lattice generated by $e_1=(0,0,1)$, $e_3=(1,0,0)$, $e_4=(0,1,\sqrt 2)$ and $\T$ the real
torus $\C\times\R/\Gamma$. $\T$ is foliated by complex curves which are everywhere dense. Let $\widetilde\Gamma$ the lattice
 generated by $\tilde{e}_1=(0,0,1,0)$, $\tilde{e}_2=(0,0,0,1)$, $\tilde{e}_3=(1,0,0,,0)$, $\tilde{e}_4=(0,1,\sqrt 2,0)$ and
$\widetilde\T$ the complex torus $\C^2/\widetilde\Gamma$: $\widetilde\T$ is the complexification of $\T$. The torus $\widetilde\T$
 is not algebraic \cite{Shafa}. Following \cite{Gig-Tom}*{Sections 5 and 6},  we consider the matrix $E=(E(\tilde{e}_{jk}))$
 where $E(\tilde{e}_{12})=1$,  $E(\tilde{e}_{21})=-1$, $E(\tilde{e}_{jk})=0$ if $jk\neq 12, 21$ and define the hermitian form $H$
 on $\C^2$  by $H(\zeta,\zeta')=E(i\zeta,\zeta')+iE(\zeta,\zeta')$. To $H$ corresponds a line bundle $\lsf\to\widetilde\T$ whose restriction to $\T$
 is positive. It follows that there exist a (connected) weakly complete neighborhood $U$ of $\T$ and a positive line bundle ${\widetilde\lsf}\to U$ which extends $\lsf$.
By a theorem of Hironaka (see \cite{Nakano}) $U$ embeds in some $\Pro^N$ (see \cite{Gig-Tom}*{Sections 5 and 6}). In particular $\mcal(U)\neq\C$. On the other hand $\ocal(U)=\C$ since every holomorphic function in $U$ must be constant on $\T$ whence on $U$.
\end{exa}
\begin{rem}
In the first two examples one has $\ocal(X)=\mcal(X)=\C$ while for the third $\ocal(X)=\C$ but $\mcal(X)$ contains many meromorphic functions. Moreover in the example \ref{exa3.1} $X'=X\smallsetminus C_1$ is a corona, see Section \ref{annuli} for the precise definition, for which $\ocal(X')=\C$.
\end{rem}
\subsection{Classification results}\label{class}

In the previous examples the weakly complete surfaces have an exhaustion function which is real analytic, but this is not true in general. Indeed, an example by Brunella (see \cite{Brunella}*{Theorem 1} and Section \ref{Brunella}) shows that there exist weakly complete surfaces which do not admit a  a real-analytic exhaustion function.
Without the real-analyticity as a bridge from local to global, a ``classification'' of weakly complete  surfaces in the general case seems to be very hard, so we restrict ourselves to the real-analytic case.

We consider a complex surface $X$ endowed with a real-analytic, plurisubharmonic exhaustion function $\alpha:X\to\R$. In a joint paper with Slodkowski \cite{Mon-Slo-Tom}, we classified  such complex surfaces. By passing to a desingularization, the theorem stated for a complex surface $X$, covers also the case of singular spaces.

Before getting involved in the main results of \cite{Mon-Slo-Tom}, we point out some consequences of the results we presented in the previous Section.
\begin{itemize}
\item Clearly, there are no obstruction for $X$ to contain compact negative curves.
\item No compact positive curve and no compact curve belonging to Class $\alpha$ is present on $X$. Indeed, by Suzuki (see \ref{zuki}) and Ueda (\ref{Ue}) such a curve should have strongly pseudoconcave neighborhoods and this would force $\alpha:X\to\R$ to be constant.
\item If $X$ contains a curve $C$ of  Class $\beta'$, there exist uncountably many compact complex curves near $C$ (see \ref{Ue}, and property $(\beta'_1)$). Then by Nishino results (see Section \ref{Nino}) $X$ is proper over an open holomorphic curve.
\item If $X$ contains $C$ belonging to Class $\beta''$ then there is a neighborhood of $C$ which is of Grauert type (see Section \ref{nbhcurve})
\item If $\ocal(X)\neq\C$, then, by Ohsawa's result, $X$ is either a modification of a Stein space or, if it contains a non-negative curve, is proper over an open complex curve.
\end{itemize}

\begin{question}Is it possible to have a weakly complete space that contains a compact curve belonging to Class $\gamma$? How do plurisubharmonic functions behave in the neighborhood of such curve?\end{question}

We now turn our attention to the classification result for weakly complete complex surfaces $X$, endowed with a real-analytic plurisubharmonic exhaustion function $\alpha:X\to\R$.

\begin{teo}\label{teo_Main}
Consider a non singular complex surface $X$ as above. Then, one of the following three cases occurs:
\begin{itemize}
\item[i{\rm)}] $X$ is a modification of a Stein space of dimension $2$
\item[ii{\rm)}] $X$ is proper over a {\rm(}possibly singular{\rm)} open Riemann surface
\item[iii{\rm)}] the regular level sets of $\alpha$ are compact Levi-flat surfaces foliated with dense complex leaves.
\end{itemize}
\end{teo}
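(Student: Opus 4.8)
\noindent The plan is to deduce the trichotomy from the structure of the minimal kernel $\Sigma_X$, using the real-analyticity of $\alpha$ as the bridge that turns information on one level set into information on all of $X$. If $\Sigma_X=\emptyset$, the minimal function is a strictly plurisubharmonic exhaustion and $X$ is Stein by Grauert--Narasimhan, so $(i)$ holds with the trivial modification. Assume $\Sigma_X\neq\emptyset$. Because $\alpha$ is real analytic, $\Sigma_X$ (or a representative of it attached to a real-analytic minimal function) is a real-analytic subset of $X$, and the crude alternative is whether or not it has non-empty interior. If it does, then a real-analytic subset of the connected manifold $X$ with non-empty interior is all of $X$, so $\Sigma_X=X$; this is moreover the only way $\Sigma_X$ can contain a connected component of a regular level set of $\alpha$, since by the propagation Corollary that would force a connected component of a slab $\{s\le\alpha\le c\}$, which has non-empty interior, to lie inside $\Sigma_X$. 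In the opposite (``thin'') case, $\Sigma_X$ contains no component of any regular level set, so for every regular value $c$ the slice $\Sigma_X\cap\{\alpha=c\}$, if non-empty, is a proper real-analytic subset of the corresponding level set which, by the Proposition on slices, is locally a union of holomorphic graphs; being proper it has real dimension $2$ and is locally a holomorphic curve, while the rest of $\Sigma_X$ lives over critical values of $\alpha$.

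\emph{The fat case $\Sigma_X=X$.} Every regular level set $Y$ then lies in $\Sigma_X$, hence by the Proposition on slices --- now with the holomorphic graphs sweeping out all of $Y$ --- it is Levi flat and carries its Levi foliation by complex curves. It remains to see the leaves are dense in $Y$. A compact leaf $C$ is, by Ueda's analysis of its neighbourhood, flat, hence of Class $\beta'$, $\beta''$ or $\gamma$: a Class $\beta'$ curve has uncountably many nearby compact curves and Nishino's theorem then lands us in case $(ii)$; a Class $\beta''$ curve carries a pseudoflat (Grauert-type) exhaustion attaining its minimum along $C$, so $C$ sits on a critical, not a regular, level and cannot be one of our leaves; a Class $\gamma$ curve must be excluded by an ad hoc argument. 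With no compact leaf, a minimality argument for the real-analytic Levi foliation of the compact hypersurface $Y$ shows every leaf is dense, which is case $(iii)$.

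\emph{The thin case.} Here $\Sigma_X$ has empty interior; one shows it is then either a $3$-dimensional set which, as $c$ ranges over an interval of regular values, is swept out by a family of compact complex curves, or a compact set carried by a single level set and consisting of finitely many compact complex curves $C_1\cup\dots\cup C_N$ (plus, harmlessly, finitely many points). In the first subcase $X$ contains an uncountable pairwise disjoint family of compact curves inside some $\{\alpha<c_1\}\Subset X$; by Nishino's normal-family result one of them is generic, and Nishino's theorem makes $X$ proper over an open Riemann surface --- case $(ii)$. In the second subcase, no $C_i$ is positive or of Class $\alpha$ (a strictly pseudoconcave neighbourhood would contradict the existence of $\alpha$) nor of Class $\beta''$ (its Grauert-type neighbourhood is $3$-dimensional, contradicting $\dim_\R\Sigma_X=2$); excluding Class $\gamma$ once more, if some $C_i$ is of Class $\beta'$ then Nishino gives case $(ii)$, while if every $C_i$ is negative the configuration $\bigcup C_i$ is Grauert-contractible (the global local-maximum property of $\Sigma_X$ forces its intersection matrix to be negative definite), and contracting it yields a weakly complete surface with empty minimal kernel, hence Stein, so $X$ is a modification of a Stein surface --- case $(i)$.

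\emph{Main obstacles.} The technical heart is the fat case: giving $\Sigma_X$ a genuine real-analytic structure and then passing from the local disc structure together with the propagation Corollary to the global statement $\Sigma_X=X$, which is exactly where real-analyticity is indispensable. The other delicate points are the Grauert-contractibility of a configuration of negative curves inside $\Sigma_X$ and the recurring exclusion of Ueda's mysterious Class $\gamma$; proving density of the leaves in case $(iii)$ once compact leaves are ruled out is comparatively soft. As a consistency check, the trichotomy refines the dichotomy on $\ocal(X)$: in cases $(i)$ and $(ii)$ there are many holomorphic functions, whereas in case $(iii)$ every holomorphic function is constant --- the maximum of its modulus over a compact regular level set is attained at an interior point of a leaf, forcing constancy along that leaf, along its closure (a real hypersurface), and hence on $X$ --- so case $(iii)$ is precisely the situation $\ocal(X)=\C$; in the complementary situation Ohsawa's theorem makes $X$ holomorphically convex and its Remmert reduction, Stein of dimension $1$ or $2$, yields case $(ii)$ or case $(i)$ respectively.
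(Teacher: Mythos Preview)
Your argument rests on giving $\Sigma_X$ a real-analytic structure via ``a real-analytic minimal function'', but no such function is known to exist: minimal functions are produced by infinite sums of countably many exhaustions and are only $\ccal^\infty$ in general. Without this, your fat/thin dichotomy for $\Sigma_X$ as a real-analytic set collapses at the first step. The paper's approach (carried out in \cite{Mon-Slo-Tom}, and visible in the proofs of Theorems~\ref{teo_buco} and~\ref{teo34} here) never makes $\Sigma_X$ real-analytic. Instead it applies real-analyticity to $\alpha$ itself: the key lemma (\cite{Mon-Slo-Tom}*{Theorem 3.6}) shows that if a regular level $Y$ meets $\Sigma_X$ but contains no compact curve, then $Y\subset\Sigma_X$, $Y$ is Levi flat, and the real-analytic $(2,2)$-form $\de\debar\alpha\wedge\de\alpha\wedge\debar\alpha$ vanishes on $Y$; since $Y$ has real dimension $3$ and $\alpha$ is real analytic, this form vanishes on all of $X$, forcing \emph{every} regular level to be Levi flat. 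Density of the leaves then comes from writing such a level as the zero set of a pluriharmonic function (\cite{Mon-Slo-Tom}*{Lemma 3.7, Corollary 3.8}), not from any minimality argument for foliations.

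Your use of Ueda's classification is a second genuine gap. You invoke ``an ad hoc argument'' twice to exclude Class~$\gamma$, but none is given --- and the paper itself leaves the behaviour of plurisubharmonic functions near a Class~$\gamma$ curve as an open question. The proof in \cite{Mon-Slo-Tom} bypasses Ueda's classes entirely: when a regular level contains a compact curve, one shows directly (\cite{Mon-Slo-Tom}*{Lemma 4.1, Theorem 4.2}) that the level is foliated by compact curves and then invokes Nishino. Likewise, your ``thin case'' endgame --- reducing $\Sigma_X$ to a finite configuration of negative curves with negative-definite intersection matrix and contracting it --- is neither justified nor how the argument goes: case~(i) is obtained simply by finding a sequence $c_n\to+\infty$ with $\{\alpha=c_n\}\cap\Sigma_X=\emptyset$, which already makes each sublevel a modification of a Stein space.
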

A weakly complete surface $X$ which carries a smooth plurisubharmonic exhaustion function $\varphi$ whose regular level sets are Levi flat hypersurfaces, foliated by dense complex leaves (along which the Levi form of $\varphi$ degenerates) is said to be a space of \emph{Grauert type} (see \cite{Mon-Slo-Tom}, \cite{Mon-Slo-Tom1}) as their structure generalizes an example by Grauert (see, for instance \cite{Nar});

Cases i) and ii) end up being holomorphically convex, with Remmert reductions of dimension $2$ and $1$ respectively, whereas, in the third case iii), no non-constant holomorphic function exists: indeed, any holomorphic function would be constant along the complex leaves that foliate the regular levels of $\alpha$, but then it would be constant on the whole level, which is of real dimension $3$, so it would be constant on $X$.

The peculiar geometry of Grauert type surfaces does not only affect holomorphic functions, but also plurisubharmonic functions. We have the following result.

\begin{pro}
Let $X$ be a Grauert-type surface with a real-analytic plurisubharmonic exhaustion function $\alpha:X\to\R$ and let
$$M=\big\{x\in X\ :\ \alpha(x)=\min_{X}\alpha\big\}\;.$$
Then, we have two possibilities:
\begin{itemize}
\item[iii-a{\rm)}] $M$ is a compact complex curve and there exists a proper pluriharmonic function $\chi$ on $X\setminus M$ such that every pluri\-sub\-harmonic function on $X\setminus M$ is of the form $\gamma\circ\chi$
\item[iii-b{\rm)}] $M$ has real dimension $3$ and there exist a double holomorphic covering map $\pi:X^*\to X$ and a proper pluriharmonic function $\chi^*:X^*\to\R$ such that every plurisubharmonic function on $X^*$ is of the form $\gamma\circ \chi^*$.
\end{itemize}
In both cases, $\gamma$ is a convex, increasing real function.
\end{pro}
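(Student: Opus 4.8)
The plan is to read off the geometry of $M$, to put $\alpha$ into a local normal form $\alpha=\lambda\circ h$ with $h$ pluriharmonic and $\lambda$ convex, and then to glue these local potentials, the only surviving obstruction being a $\Z/2$‑monodromy which in case iii‑b is absorbed by passing to a double cover. First I would identify $M$. Since $X$ is of Grauert type we have $\Sigma_X=X$ (the regular level sets of $\alpha$ lie in $\Sigma_X$ and, $\alpha$ being real‑analytic, are dense), so $M\subset\Sigma_X$; being the minimum set of the real‑analytic $\alpha$, $M$ is a compact real‑analytic set, and by the structure of the slices of $\Sigma_X$ along level sets (in a surface, locally a union of holomorphic discs) every stratum of $M$ is a union of holomorphic discs. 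Hence $\dim_{\R}M\in\{2,3\}$: it cannot be $0$ or $1$ because of the disc structure, nor $4$ since $X$ is connected and noncompact. If $\dim_{\R}M=2$, then $M$ is a compact complex curve and we are in case iii‑a; if $\dim_{\R}M=3$, then, being a union of complex curves, $M$ is a Levi‑flat hypersurface and we are in case iii‑b.

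Next I would use the local structure of Grauert‑type surfaces (as in \cite{Mon-Slo-Tom1}): near any point, $\alpha=\lambda\circ h$ with $h$ pluriharmonic and $\lambda$ convex, with $h$ determined up to a change $t\mapsto\pm t+b$. Heuristically, $i\de\debar\alpha$ has rank $\le1$ and its kernel is a holomorphic foliation $\fcal$ whose leaves are the prescribed dense complex leaves of the level sets; by the maximum principle together with this density, every plurisubharmonic function — in particular $\alpha$ — is constant on the leaves of $\fcal$, hence descends to a subharmonic function on the local leaf space; the leaf‑holonomy of $\fcal$ acts on that leaf space by holomorphic germs preserving this function, with orbits dense in each of its level curves, which forces the level curves to be circles in a suitable holomorphic coordinate $q$, so $\alpha$ is locally a function of $\log|q|$. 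The behaviour at $M$: in case iii‑a, $M$ is a single leaf of $\fcal$, i.e. an isolated minimum on the leaf space, and $q$ vanishes on $M$, which places $M$ in Ueda's Class $\beta''$ ($|q|$ single‑valued, $h=\log|q|\to-\infty$ on $M$) and makes $\lambda$ increasing with $\lambda(-\infty)=\min_X\alpha$; in case iii‑b, $M$ corresponds to a level curve $\{|q|=1\}$, $h$ vanishes on $M$, and $\lambda$ is convex with a single interior minimum at $0$.

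Then I would globalize. On overlaps the transition maps are biholomorphisms preserving the circle foliation, so $h_U=\pm h_V+b_{UV}$ with locally constant cocycles; the additive part must vanish, since otherwise $\lambda$ would be periodic (impossible for a nonconstant convex function), so the monodromy of $h$ lies in $\{\pm1\}$. In case iii‑a the sign is always $+$ — near $M$ the chart is a punctured disc with $M$ at the puncture, so no transition there can be an inversion — whence $h$ glues to a global pluriharmonic $\chi:X\setminus M\to\R$ with $\alpha=\lambda\circ\chi$. In case iii‑b the sign may be $-1$, precisely when some loop of $X$ reverses the two sides of $M$; then $h$ becomes single‑valued on the associated connected double cover $\pi:X^*\to X$, yielding a pluriharmonic $\chi^*:X^*\to\R$ with $\alpha\circ\pi=\lambda\circ\chi^*$. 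In either case $\lambda$ is convex and proper as a function of one real variable, so $\chi$ (resp. $\chi^*$) is proper because $\alpha$ is an exhaustion.

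Finally, for any plurisubharmonic $v$ on $X\setminus M$ (resp. on $X^*$), the same maximum‑principle‑and‑density argument shows $v$ is constant on each regular level set of $\chi$ (resp. $\chi^*$), so $v=\gamma\circ\chi$ for some $\gamma$ of one real variable; writing $\chi$ locally as $\mathrm{Re}(g)$ and computing $i\de\debar v=\tfrac14\gamma''(\chi)\,i\,\de g\wedge\debar\bar g\ge0$ shows $\gamma$ is convex, and — after orienting $\chi$ suitably, in particular for exhaustions — increasing. I expect the real work to be concentrated in the local normal form (the assertion that a level curve carrying a dense pseudo‑group of holomorphic symmetries is a circle, and the separate analysis at $M$) and in the monodromy bookkeeping that isolates the $\Z/2$ resolved by the double cover; the remaining steps — identifying $M$, the maximum principle, and the convexity of $\gamma$ — are comparatively routine.
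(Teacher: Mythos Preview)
The paper does not prove this proposition: immediately after stating it, the authors write that ``the two previous statements are the content of the Main Theorem in \cite{Mon-Slo-Tom}'' and explicitly decline to enter the details, referring the reader to \cite{Mon-Slo-Tom}, \cite{Mon-Slo-Tom1}, \cite{Mon-Slo-Tom0}. So there is no in-paper argument to compare against; your sketch is in fact broadly the strategy carried out in \cite{Mon-Slo-Tom} (local normal form $\alpha=\lambda\circ h$ with $h$ pluriharmonic obtained from the dense-leaf holonomy on the transversal, then a monodromy analysis to globalize $h$, with the residual $\Z/2$ absorbed by a double cover).

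Two steps in your outline are thinner than the actual proof and would need real work. First, your claim that the additive part $b_{UV}$ of the transition must vanish ``since otherwise $\lambda$ would be periodic'' presupposes a single global $\lambda$, which is what you are trying to establish; what is actually used is that the regular level sets of $\alpha$ are connected (a nontrivial fact, treated separately in the cited papers), so that $h$, being constant on each such level, is globally a function of $\alpha$ up to the two branches of $\lambda^{-1}$ --- and that is precisely where the $\pm1$ monodromy enters. Second, your dimension count for $M$ invokes the holomorphic-disc structure of the slices of $\Sigma_X$, but that proposition is stated for \emph{regular} level sets, whereas $M$ is the (singular) minimum level; the determination that $\dim_\R M\in\{2,3\}$ and the companion fact that $\mathrm{Crt}(\alpha)$ has the same dimension as $M$ require a separate analysis of the critical locus, carried out in Section~5 of \cite{Mon-Slo-Tom}. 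Finally, the assertion that $\gamma$ is \emph{increasing} is delicate in case~iii-b, where $\chi^*$ is two-sided and a general plurisubharmonic function only forces $\gamma$ to be convex; the paper's own formulation is already somewhat informal on this point.
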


In addition, it is also true that the set $\textrm{Crt}(\alpha)$ of critical points of $\alpha$ has the same dimension of $M$.

\medskip

The two previous statements are the content of the Main Theorem in \cite{Mon-Slo-Tom}. Moreover, in \cite{Mon-Slo-Tom1}, we analyze further the geometry of Grauert-type surfaces, obtaining the following results:
\begin{itemize}
\item the level sets of the pluriharmonic function $\chi$ (or $\chi^*$) are connected, hence the function is somehow minimal,
\item any compact curve not contained in $M$ is negative in the sense of Grauert,
\item there are examples of Grauert-type surfaces which do not posses any pluriharmonic function, but their double covering does.
\end{itemize}

We do not want to enter the details of the proofs, for which we refer the reader to the papers \cites{Mon-Slo-Tom, Mon-Slo-Tom1} and the introductory note \cite{Mon-Slo-Tom0}.

Here, we only want to remark that most of the methods used are of a ``local'' nature, i.e. they work in a neighborhood of a level of the exhaustion function. The main tool in our investigation was the minimal kernel $\Sigma_X$, defined in \cite{Slo-Tom}, and its intersections with the level sets of an exhaustion function.

\medskip

As an example of the consequences of this ``local'' nature, we examine the case when all our hypotheses hold outside of a compact set.

We first observe that if $X$ carries an exhaustion function $\phi$ which is plurisubharmonic away from a compact subset $K$ then $X$ is weakly complete. Indeed, let $c$ such that $K\Subset\{\phi<c\}$ and take as a new exhaustion function $\psi:=\max(\phi,c')$ with $c'>c$.

Then we deduce the following

\begin{teo}\label{teo_buco} Let $X$ be a complex surface, $\phi:X\to\R$ an exhaustion function and $K\Subset X$ a compact set such that $\phi$ is real analytic and plurisubharmonic outside $K$. Then, one of the following cases occurs:
\begin{itemize}
\item[1{\rm)}] $X$ is a modification of a Stein space
\item[2{\rm)}] $X$ is proper over a \(possibly singular\) open Riemann surface
\item[3{\rm)}] $X\setminus K$ is of Grauert type.
\end{itemize}
\end{teo}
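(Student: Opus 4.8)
The plan is to reduce Theorem \ref{teo_buco} to the already-known classification Theorem \ref{teo_Main} by producing, on the whole of $X$, a real-analytic plurisubharmonic exhaustion that agrees with $\phi$ outside a slightly larger compact set, and then tracking which of the three cases of Theorem \ref{teo_Main} can actually occur. First I would fix $c$ with $K\Subset\{\phi<c\}$ and, as in the remark preceding the statement, replace $\phi$ by $\psi:=\max(\phi,c')$ for some $c'>c$; this is a plurisubharmonic exhaustion on all of $X$, constant (equal to $c'$) on a neighborhood of $K$, hence in particular plurisubharmonic there, and it coincides with $\phi$ on $\{\phi\ge c'\}$. The one technical point is that $\max(\phi,c')$ is only continuous, not smooth or real-analytic; I would fix this by a regularized maximum $\max_\varepsilon$, which is real-analytic wherever the two arguments are (here $\phi$ is real-analytic outside $K$ and $c'$ is constant everywhere), so that the regularized $\psi$ is a real-analytic plurisubharmonic exhaustion on $X$, equal to $\phi$ on $\{\phi>c'+\varepsilon\}$.

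Now apply Theorem \ref{teo_Main} to $(X,\psi)$: either (i) $X$ is a modification of a Stein space, or (ii) $X$ is proper over an open Riemann surface, or (iii) the regular level sets of $\psi$ are compact Levi-flat surfaces foliated by dense complex leaves. Cases (i) and (ii) are exactly cases 1) and 2) of the theorem, so there is nothing further to prove there. In case (iii), I would use that $\psi\equiv\phi$ on the unbounded region $\{\psi>c'+\varepsilon\}$: for every regular value $t>c'+\varepsilon$ the level set $\{\psi=t\}=\{\phi=t\}$ is a compact Levi-flat hypersurface foliated by dense complex leaves, and these levels exhaust $X\setminus\{\psi\le c'+\varepsilon\}\supset X\setminus K'$ for a suitable compact $K'$. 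To get the conclusion exactly as stated ($X\setminus K$ of Grauert type) I would take the restriction of $\phi$ to the open set $X\setminus K$, note it is a real-analytic plurisubharmonic exhaustion of $X\setminus K$ whose regular level sets $\{\phi=t\}$ for large $t$ are precisely those Levi-flat dense-leaf hypersurfaces, and observe that the Levi form of $\phi$ degenerates along the leaves (this is part of what Theorem \ref{teo_Main}(iii) gives, via the propagation results of Section \ref{prelim}): that is the definition of Grauert type for $X\setminus K$. One should also record that in case (iii) there are no nonconstant holomorphic functions on $X\setminus K$ near infinity, by the dimension argument recalled after Theorem \ref{teo_Main}.

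The main obstacle I anticipate is the smoothing/matching step: one must be careful that after regularizing $\max(\phi,c')$ the resulting function is genuinely real-analytic (not merely smooth) on all of $X$ and still an exhaustion — this works precisely because one argument of the max is the constant $c'$, so the "corner" is being rounded off between a real-analytic function and a real-analytic (constant) function, and a real-analytic regularized maximum can be used; and that it coincides with the original $\phi$ on an unbounded set, so that the geometric conclusion of Theorem \ref{teo_Main} in case (iii) can be transported to level sets of $\phi$ itself rather than of the modified exhaustion. A secondary, more cosmetic point is to make sure the trichotomy is genuinely exhaustive and that in case (iii) one really obtains $X\setminus K$ (and not just $X$ minus some possibly larger or smaller compact set) is of Grauert type; this is handled by noting that being of Grauert type is a property of the end of the surface, insensitive to which compact piece one removes, so deleting exactly $K$ is harmless. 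I do not expect any difficulty beyond bookkeeping in the cases where $X$ turns out to be holomorphically convex.
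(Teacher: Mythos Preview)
Your approach has a genuine gap at the very first step, the construction of a global real-analytic plurisubharmonic exhaustion $\psi$ on $X$. You propose to use a regularized maximum $\max_\varepsilon(\phi,c')$ and claim it can be made real-analytic because both arguments are real-analytic. But these two requirements are in conflict. If you use the standard smooth regularized maximum (with a compactly supported mollification of $|\,\cdot\,|$), then $\psi$ equals the constant $c'$ on the open set $\{\phi<c'-\varepsilon\}$, which is nonempty and contains $K$; by the identity principle for real-analytic functions on a connected manifold, $\psi$ would then be identically $c'$, so it is not an exhaustion. If instead you use a genuinely real-analytic regularizer such as $\max_\varepsilon(a,b)=\tfrac{a+b}{2}+\tfrac12\sqrt{(a-b)^2+\varepsilon^2}$, then $\psi$ depends on $\phi$ at \emph{every} point, in particular on $K$, where $\phi$ is not assumed real-analytic (nor plurisubharmonic); so $\psi$ is neither real-analytic nor plurisubharmonic on $K$. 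There is no way around this: the identity principle forbids a nonconstant real-analytic function from coinciding with a constant on an open set, so you cannot ``patch'' $\phi$ across $K$ in the real-analytic category.

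The paper's proof avoids this obstruction entirely by \emph{not} trying to produce a global real-analytic exhaustion and then invoking Theorem~\ref{teo_Main} as a black box. Instead it observes that $X$ is weakly complete (via the continuous $\max(\phi,c')$, which is fine for that purpose), and then reruns the \emph{local} arguments behind Theorem~\ref{teo_Main} (those of \cite{Mon-Slo-Tom}*{Theorems 4.2, 4.4 and Corollary 4.3}) directly on the level sets of $\phi$ lying in $X\setminus K$, where $\phi$ is genuinely real-analytic and plurisubharmonic. The trichotomy then arises from whether arbitrarily high level sets are strictly pseudoconvex, whether some regular level set in $X\setminus K$ contains a compact curve (which propagates globally by \cite{Mon-Slo-Tom}*{Theorem 4.2}), or neither. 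Your strategy of reducing to Theorem~\ref{teo_Main} cannot be carried out as written; you would need to open up that theorem's proof, which is exactly what the paper does.
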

\begin{proof} By the above remark $X$ is weakly complete so we can repeat verbatim the proof of \cite{Mon-Slo-Tom}*{Theorem 4.4}, restricting ourselves to $X\setminus K$. If we have a sequence $c_n\to+\infty$ such that $\{\phi=c_n\}$ is contained in $X\setminus K$ and is strictly pseudoconvex, then we obtain that $X$ is a modification of a Stein space; if we have a regular level of $\phi$ in $X\setminus K$ containing a compact curve, then, by \cite{Mon-Slo-Tom}*{Theorem 4.2}, $X$ would be foliated in compact complex curves. So, by \cite{Mon-Slo-Tom}*{Corollary 4.3}, all the regular level sets of $\phi$ contained in $X\setminus K$ are Levi flat hypersurfaces foliated with dense complex leaves.
\end{proof}

One striking difference between the first two cases and the Grauert-type surfaces is the ``propagation'' of the information, when there is no real-analytic exhaustion to act as a bridge from local to global.

If there exists $c\in\R$ such that the level set $\{x\in X\ :\ \alpha(x)=c\}$ is strictly pseudoconvex then the sublevel set $\{x\in X\ :\ \alpha(x)<c\}$ is a modification of a Stein space; even more, if $\{x\in X\ :\ \alpha(x)=c\}\cap\Sigma_X=\emptyset$, then we can find a strictly pseudoconvex hypersurface, arbitrarily close to the level set of $\alpha$, therefore we can approximate the sublevel set with strictly pseudoconvex domains (which are modifications of Stein spaces), implying that also our sublevel is a modification of a Stein space.

If there exists $c\in\R$ such that the level set $\{x\in X\ :\ \alpha(x)=c\}$ contains uncountably many compact complex curves, or, equivalently, a generic compact complex curve (in the sense of Nishino, see Section \ref{Nino}), then the whole manifold $X$ is proper over an open complex curve; in this case, the information does not only extend to ``fill the hole'' in the sublevel set, but to the whole of $X$.

If there exists $c\in\R$ such that $\{x\in X\ :\ \alpha(x)=c\}$ is of Grauert type, we are sure that no generic curves are present in $X$ (otherwise $X$ would be union of compact complex curves, by Nishino); we are also sure that all the level sets $\{x\in X\ :\ \alpha(x)=c'\}$ with $c'>c$ intersect $\Sigma_X$, otherwise the corresponding sublevel would be a modification of a Stein space containing a compact Levi-flat hypersurface, which is absurd. However, we are not able to say much about the level sets with $c'<c$.

This difference is at the core of the example by Brunella (see Section \ref{Brunella}) of a weakly complete surface without real-analytic plurisubharmonic exhaustion functions; as a partial result, we note that we can exploit the existence of some particular pluriharmonic functions on a Grauert-type surface, in order to extend the information to a sublevel, given the appropriate topological condition.

By standard cohomologicaly techniques, we have the following extension result for pluriharmonic functions:
\begin{pro}\label{Hartogs_P}Suppose $W$ is a complex manifold with $H^2_c(W,\R)=0$, $U\subseteq W$ an open domain and $K\Subset U$ a compact set. Then every pluriharmonic function $\chi:U\setminus K\to\R$ has a pluriharmonic extension $\widetilde{\chi}:U\to\R$.
\end{pro}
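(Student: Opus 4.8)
The plan is to turn the extension problem into the solution of a $\de\debar$-equation with compact support by means of a cut-off, and then to annihilate the resulting obstruction using $H^2_c(W,\R)=0$.

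\emph{The cut-off.} First I would fix an open set $V$ with $K\Subset V\Subset U$ and a function $\rho\in\ccal_c^\infty(V)$ with $\rho\equiv 1$ near $K$, and set $\chi_0:=(1-\rho)\chi$ (understood to be $0$ on $\{\rho\equiv 1\}$). Then $\chi_0\in\ccal^\infty(U)$, $\chi_0\equiv\chi$ off $\operatorname{supp}\rho$, and $\alpha:=i\de\debar\chi_0$ is a smooth real $(1,1)$-form whose support is a compact subset of $U$: indeed $\alpha$ vanishes near $K$, where $\chi_0\equiv 0$, and off $\operatorname{supp}\rho$, where $\chi_0=\chi$ is pluriharmonic. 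If one can find a real $v\in\ccal^\infty(W)$ with compact support such that $i\de\debar v=\alpha$, then $\widetilde\chi:=\chi_0-v$ is pluriharmonic on $U$ and coincides with $\chi$ off a compact set, hence — a pluriharmonic function being real-analytic — coincides with $\chi$ on all of $U\setminus K$ by the identity principle. (This last step, and so the statement itself, requires $U\setminus K$ to be connected; this is implicit and is genuinely needed, as the example $U=W=\C^2$ with $K$ a closed spherical shell shows.)

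\emph{Killing the obstruction.} Regarded on $W$, $\alpha$ is a closed, compactly supported $2$-form (indeed $\alpha=d(i\debar\chi_0)$), so the hypothesis $H^2_c(W,\R)=0$ provides a real, compactly supported $1$-form $\gamma$ with $d\gamma=\alpha$. Comparing bidegrees in $d\gamma=\alpha$ gives $\de(\gamma^{1,0})=0$, so $\gamma^{1,0}$ is a $\de$-closed compactly supported $(1,0)$-form; solving the equation $\de u=\gamma^{1,0}$ with compact support on $W$ (a compactly supported $\debar$-problem, via conjugation) yields a compactly supported $u$, and then the $(1,1)$-component of $d\gamma=\alpha$ becomes $i\de\debar(-2\operatorname{Im}u)=\alpha$, so $v:=-2\operatorname{Im}u$ is the desired function; since $v$ is holomorphic, hence $0$, on the component of $W\setminus\operatorname{supp}\gamma$ reaching infinity, its support may moreover be kept near $K$. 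Equivalently, in sheaf-cohomological language one runs the short exact sequence $0\to i\underline\R\to\ocal_W\xrightarrow{\operatorname{Re}}\mathcal{PH}_W\to 0$ through the cohomology sequence with supports in $K$: the obstruction to extending $\chi$ lies in $H^1_K(U,\mathcal{PH})$, injects — via Hartogs extension, $H^1_K(U,\ocal)=0$ — into $H^2_K(U,\underline\R)\cong H^2(W,W\setminus K;\R)$, and is controlled by $H^2_c(W,\R)$.

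\emph{The main obstacle.} The subtle point is that $H^2_c(W,\R)=0$ by itself disposes only of the \emph{topological} half of the obstruction (the class $[\alpha]$, or the passage $H^2_K(U,\cdot)\to H^2_c(W,\cdot)$); one must also have the auxiliary $\de$-/$\debar$-equation above solvable with compact support, equivalently a Hartogs extension property $H^1_c(W,\ocal)=0$ (or $H^1_K(U,\ocal)=0$), which is available in the generality in which the proposition is applied — observe, too, that $H^2_c(W,\R)=0$ already forces $\dim_\C W\ge 2$, since a connected Riemann surface has $H^2_c\cong H^0\ne 0$. Making the bookkeeping fully rigorous — excision $H^2_K(U,\cdot)\cong H^2(W,W\setminus K;\cdot)$, the compatibility of the connecting homomorphisms for the two exact sequences, and the control of supports guaranteeing that the produced function restricts to $\chi$ on the \emph{whole} of $U\setminus K$ — is where the real work lies.
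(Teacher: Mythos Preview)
The paper does not prove this proposition; it only says that it follows ``by standard cohomological techniques.'' Your cut-off argument reducing to a compactly supported $\de\debar$-problem, and the parallel sheaf-theoretic reading via $0\to i\underline{\R}\to\ocal\to\mathcal{PH}\to 0$, are precisely those standard techniques, so your approach is the intended one and there is nothing substantive to compare.

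What you add beyond the paper is a correct diagnosis of what that one-line appeal hides. The hypothesis $H^2_c(W,\R)=0$ kills only the topological obstruction (the de Rham class of $\alpha$, or the image in $H^2_K(U,\underline{\R})$); completing the argument still requires a Hartogs-type input --- solvability of the compactly supported $\debar$-equation in degree $(0,1)$, equivalently $H^1_K(U,\ocal)=0$ --- which is not a formal consequence of the stated hypothesis but is available in the two places the proposition is actually invoked (Corollary~\ref{cor12} and the last corollary of Section~\ref{smoothcase}). Your remark that $U\setminus K$ must be connected for the identity-principle step is likewise correct and again tacit in the applications. One small slip: you write ``since $v$ is holomorphic, hence $0$'', but $v=-2\operatorname{Im}u$ is real; the holomorphicity-and-vanishing argument should be run on $\bar u$ (holomorphic where $\gamma^{1,0}=0$), and $v$ then inherits the support control from $u$.
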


\begin{cor}\label{cor12}Suppose $X$ is a complex surface such that $H^2(X,\R)=0$. If there exist an exhaustion function $\phi:X\to\R$ and $c\in\R$ such that
\begin{itemize}
\item[1{\rm)}] $X_c=\{\phi\leq c\}\Subset X$ is connected
\item[2{\rm)}] $X\setminus X_c$ is of Grauert type
\item[3{\rm)}] $\phi$ is real analytic and plurisubharmonic on $X\setminus X_c$
\end{itemize}
then $X$ is of Grauert type.
\end{cor}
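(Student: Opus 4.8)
The plan is to re‑run the argument of Theorem~\ref{teo_buco} and then to fill in the hole $X_c$ by extending a pluriharmonic function, the decisive new input being Proposition~\ref{Hartogs_P}. First I would note that $X$ is weakly complete (by the remark preceding Theorem~\ref{teo_buco}), so Theorem~\ref{teo_buco} applies. Since $X\setminus X_c$ is of Grauert type it is connected and carries no non‑constant holomorphic function, hence the identity‑principle injection $\ocal(X)\hookrightarrow\ocal(X\setminus X_c)$ forces $\ocal(X)=\C$; consequently $X$ can be neither a two–dimensional Stein modification nor proper over an open Riemann surface (both would carry non‑constant holomorphic functions), so the only relevant alternative of Theorem~\ref{teo_buco} is the one in hypothesis $2)$, and what remains to be shown is that the Grauert‑type structure of the end $X\setminus X_c$ globalizes to all of $X$.

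To do this I would extract from $X\setminus X_c$ a pluriharmonic ``first integral''. The dense complex leaves foliating the regular level sets of $\phi$ in $X\setminus X_c$ form a holomorphic foliation which, near each such level, admits a local pluriharmonic function of which $\phi$ is a convex function of bounded slope (this is the local normal form around a Levi‑flat level, combined with the disc structure of the $\Sigma_X$‑slices). These local data patch — possibly only after passing to a holomorphic double covering $p\colon Y\to X\setminus X_c$, exactly as in Examples~\ref{exa3.2} and~\ref{exa3.3} — into a single pluriharmonic $\chi\colon Y\to\R$ together with a convex $\gamma$ such that $\phi\circ p=\gamma\circ\chi$. This is where the two standing hypotheses enter: because $X_c$ is connected, the $\Z/2$‑monodromy of $Y\to X\setminus X_c$ extends across the hole, so $Y$ is the restriction of a holomorphic double cover $\widetilde p\colon\widetilde X\to X$; and $H^2(X,\R)=0$ gives, by Poincaré duality, $H^2_c(X,\R)=0$ (one checks the same vanishing on $\widetilde X$).

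Now I would apply Proposition~\ref{Hartogs_P} with $W=U=\widetilde X$ and the compact set $K=\widetilde p^{-1}(X_c)$: the pluriharmonic $\chi$ on $\widetilde X\setminus K=Y$ extends to a pluriharmonic $\widetilde\chi\colon\widetilde X\to\R$, and by uniqueness of such extensions $\widetilde\chi$ is still anti‑symmetric for the deck involution, so $\Psi:=\gamma\circ\widetilde\chi$ descends to a smooth plurisubharmonic function on $X$. Off $X_c$ one has $\Psi=\phi$, so $\Psi$ is an exhaustion; its regular level sets pull back to regular level sets of $\widetilde\chi$, hence are Levi‑flat (the Levi form of $\widetilde\chi$ vanishing identically), and off $X_c$ they are precisely the Levi‑flat levels of $\phi$, foliated by dense complex leaves. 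To finish, I would check that the levels of $\widetilde\chi$ meeting $X_c$ are again compact, Levi‑flat, and foliated by dense leaves: a pluriharmonic function admits no interior extremum, and — arguing as for the minimal pluriharmonic function in \cite{Mon-Slo-Tom1} — its level sets are connected, so every level of $\widetilde\chi$ agrees near infinity with one of the already understood foliated Levi‑flat levels of $\phi\circ\widetilde p$ and is therefore of the same nature throughout; this exhibits $(X,\Psi)$, whence $X$, as a Grauert‑type surface.

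The main obstacle is the construction and globalization of the pluriharmonic first integral $\chi$ across the compact hole: one must produce $\chi$ on the end $X\setminus X_c$ — where $\phi$ is not itself an exhaustion, so the classification for Grauert‑type surfaces cannot simply be quoted — control its behaviour as one approaches $\partial X_c$, and show that both the double cover and $\chi$ extend across $X_c$ compatibly with $\phi\circ p=\gamma\circ\chi$ near the boundary (equivalently, that $\widetilde\chi$ does not degenerate to a constant over $X_c$). Once $\widetilde\chi$ is in hand, recognizing $\Psi=\gamma\circ\widetilde\chi$ as a Grauert‑type exhaustion is essentially formal.
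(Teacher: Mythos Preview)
Your overall strategy --- produce a pluriharmonic function on the Grauert-type end and extend it across $X_c$ via Proposition~\ref{Hartogs_P} --- is the paper's strategy, but you make it considerably harder than necessary, and in doing so introduce steps that are not justified.

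The paper does not attempt to construct $\chi$ globally on $X\setminus X_c$, nor does it pass to a double cover. Instead it picks a single connected regular level $\{\phi=c'\}$ with $c'$ just above $c$; since $X\setminus X_c$ is of Grauert type, this level is Levi flat with dense complex leaves, and \cite{Mon-Slo-Tom}*{Corollary 4.3} already furnishes a pluriharmonic $\chi=\lambda\circ\phi$ on a \emph{neighborhood} $V$ of that single level. One then applies Proposition~\ref{Hartogs_P} directly on $X$ (with $U=\{\phi<c'\}$ and $K=\{\phi\le c''\}$ for some $c''\in(c,c')$ with $\{c''<\phi<c'\}\subset V$) to extend $\chi$ pluriharmonically to the whole sublevel $U$. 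Compact curves in the regular levels of the extended $\chi$ are ruled out via \cite{Mon-Slo-Tom}*{Theorem 4.2} (they would propagate, contradicting the Grauert-type hypothesis on the end), and \cite{Mon-Slo-Tom}*{Corollary 4.3} is invoked once more to get dense leaves. No global first integral on the end, no covering, no descent.

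Your detour through the double cover creates two genuine gaps. First, extending a $\Z/2$-cover $Y\to X\setminus X_c$ across $X_c$ amounts to surjectivity of $H^1(X,\Z/2)\to H^1(X\setminus X_c,\Z/2)$; connectedness of $X_c$ alone does not give this. Second, $H^2(X,\R)=0$ does not force $H^2(\widetilde X,\R)=0$ for a double cover: over $\R$ one has $H^2(\widetilde X,\R)\cong H^2(X,\R)\oplus H^2(X,L)$ for the sign local system $L$, and the second summand need not vanish, so your appeal to Proposition~\ref{Hartogs_P} on $\widetilde X$ is unfounded. You yourself flag the construction of a global $\chi$ on the end as the main obstacle --- and the point is that the paper simply sidesteps it by working near one level. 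Drop the double cover and the global construction; the local input from \cite{Mon-Slo-Tom}*{Corollary 4.3} plus one application of Proposition~\ref{Hartogs_P} on $X$ itself is all that is needed.
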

\begin{proof} For every $\epsilon>0$ we can find $c'\in(c,c+\epsilon)$ such that $\{\phi=c'\}$ is a connected regular level, so, by \cite{Mon-Slo-Tom}*{Corollary 4.3}, we have a neighborhood $V$ of $\{\phi=c'\}$ and a pluriharmonic function $\chi:V\to\R$, which is given by $\chi=\lambda\circ\phi$ on $V$.

Therefore, there exists $c''\in(c,c')$ such that $\{c''<\phi<c'\}\subseteq V$; we set $U=\{\phi<c'\}$ and $K=\{\phi\leq c''\}$. By duality $H^2_c(X,\R)=0$,so Proposition \ref{Hartogs_P} applies giving that $\chi$ extends to a pluriharmonic function on $U$. Its level sets in $K$ are compact and Levi flat. If there is a compact complex curve in a regular level of $\chi$, $X$ is foliated in complex curves, by \cite{Mon-Slo-Tom}*{Theorem 4.2}, so, as $X\setminus K$ is of Grauert type, this is not the case. Hence, by \cite{Mon-Slo-Tom}*{Corollary 4.3}, the regular level sets of $\chi$ have dense complex leaves.
\end{proof}

This, in particular, implies that, in the situation described in Corollary \ref{cor12}, there is a global real-analytic plurisubharmonic exhaustion function.

\subsection{Coronae of dimension 2}\label{annuli}
Many of the results of \cite{Mon-Slo-Tom} are ``local'', i.e. hold in a neighborhood of a level, and the topological condition needed on the plurisubharmonic function is the properness, i.e. the fact that the level sets are compact. Let $X$ be a complex space with a smooth plurisubharmonic function $\phi:X\to(0,+\infty)$ such that
\begin{itemize}
\item $\inf_X\phi=0$, $\sup_X\phi=+\infty$
\item for every $0<\epsilon\leq m<+\infty$ the subcorona
$$X_{\epsilon, m}=\{x\in X\ :\ \epsilon<\phi(x)<m\}$$
is relatively compact in $X$.
\end{itemize}
Such a function is called an \emph{corona exhaustion} and $X$ is called a \emph{corona}. We define $\Sigma_X$ as the set of points $x\in X$ such that every smooth plurisubharmonic corona exhaustion is not strictly plurisubharmonic; it is easy to see that all the ``local'' results on the minimal kernel used or proved in \cite{Mon-Slo-Tom} extend to this setting. We would like to obtain a classification theorem for coronae admitting a real-analytic plurisubharmonic corona exhaustion; one of the ingredients that use pseudoconvexity in a global way is the result by Nishino. We show how to adapt it to the case of a corona.

\begin{lem}\label{lmm_nishino}Let $X$ be a complex surface, with a real-analytic plurisubharmonic corona exhaustion $\alpha:X\to(0,+\infty)$. Suppose $c\in\R$ is a regular value for $\alpha$, $Y\subseteq\{x\in X\ :\alpha(x)=c\}$ a connected component containing a compact complex curve $C$ and $W$ a neighborhood of $Y$ with a pluriharmonic function $\chi:W\to\R$ such that $Y=\{x\in W\ :\ \chi(x)=0\}$.

Then, $X$ is proper over an open complex curve.
\end{lem}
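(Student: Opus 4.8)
The plan is to deduce the statement from Nishino's propagation theorem (see~\ref{Nino}). First I use the pluriharmonic function $\chi$ to show that $C$ is a \emph{generic} curve in Nishino's sense; then I re-run the globalisation argument of \cite{Nishino}*{Section~5}, the only new difficulty being that $X$ is a corona rather than a weakly complete manifold, so the part of Nishino's proof that uses a global plurisubharmonic exhaustion must be carried out inside the relatively compact subcoronae $X_{\epsilon,m}$.

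\smallskip

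\emph{Step 1: $C$ is generic.} Since $\chi$ is pluriharmonic, $\partial\chi$ is a closed holomorphic $1$-form on $W$. Because $c$ is a regular value and $Y=\{\chi=0\}$, the form $\partial\chi$ cannot vanish along $Y$ (otherwise $Y$ would be a singular hypersurface), so it is nowhere zero on some neighbourhood of $C$. As $C\subseteq Y$ we have $\chi\equiv0$ on $C$, hence $\partial\chi$ pulls back to the zero $1$-form on the compact curve $C$ and all periods of $\partial\chi$ over $H_1(C)$ vanish. Choosing a tubular neighbourhood $N$ of $C$ small enough that $N\Subset X_{c/2,2c}$, that $N$ deformation retracts onto $C$, and that $\partial\chi\neq0$ on $N$, the closed holomorphic form $\partial\chi$ admits a single-valued holomorphic primitive $f\in\ocal(N)$; subtracting the constant $f|_C$ we may assume $f|_C\equiv0$, and then --- shrinking $N$ so as to discard the other components of $\{f=0\}$ --- we get $f^{-1}(0)=C$ while $f\colon N\to D$ is a proper holomorphic submersion onto a disc $D$ with compact fibres. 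In particular $C$ is the zero set of a function holomorphic on a neighbourhood of it, so $C$ belongs to Ueda's Class~$\beta'$ (see~\ref{Ue}), i.e. $C$ is a generic curve (see~\ref{Nino}), and it sits in the holomorphic family $\{f^{-1}(t)\}_{t\in D}$ of pairwise disjoint compact curves.

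\smallskip

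\emph{Step 2: globalisation.} I would now imitate \cite{Nishino}*{Section~5}. Starting from $C$ and the local fibration $f$, one analytically continues the local defining equations and lets $\Omega\subseteq X$ be the set of points lying on a compact curve of the resulting maximal family; by the local propagation of generic curves (\cite{Nishino}*{Proposition~7}, together with Step~1) $\Omega$ is open. The crux is to show that $\Omega$ is closed --- and here the corona structure takes the place of weak completeness. Working in a fixed relatively compact subcorona $X_{\epsilon,m}$, one observes that $\alpha$ is constant on each curve of the family (maximum principle) and close to a fixed value on the curves meeting $X_{\epsilon,m}$, so those curves are contained in some $\overline{X_{\epsilon',m'}}$; moreover they have uniformly bounded area --- this is exactly the point at which the genericity of $C$, i.e. the solvability of the additive and multiplicative Cousin problems along $C$, is used, just as in \cite{Nishino} --- so Bishop's compactness theorem for analytic sets applies, and a limit of curves of the family is again a compact curve of the family: it can neither split nor grow nor become higher-dimensional nor escape towards $\alpha=0$ or $\alpha=+\infty$. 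Thus $\Omega=X$ by connectedness, the curves of the family are the fibres of a holomorphic map $\pi\colon X\to R$ onto a $1$-dimensional, possibly singular complex space $R$, and $\pi$ is proper because each subcorona $X_{\epsilon,m}$ is relatively compact and is a union of whole fibres. Since $X$ is non-compact, $R$ is an open Riemann surface, which is the assertion. The main obstacle, as indicated, is precisely this last (global) step: ruling out degeneration of the family without a genuine exhaustion, which is achieved by confining all the arguments to the relatively compact subcoronae, after which Nishino's normal-family technique applies essentially verbatim.
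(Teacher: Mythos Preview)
Your Step~1 is fine and is essentially what the paper does (via \cite{Mon-Slo-Tom}*{Lemma~4.1}): produce a holomorphic function $f$ on a neighbourhood of $C$ with $C=f^{-1}(0)$, so that $C$ sits in a one-parameter family of compact curves.

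The gap is in Step~2, and it is precisely that you never use the real-analyticity of $\alpha$. The paper's globalisation is much shorter and relies on it in an essential way: once a neighbourhood $V$ of $C$ is foliated by compact curves, one has $\de\debar\alpha\wedge\de\alpha\wedge\debar\alpha=0$ on $V$, and by real-analyticity this identity propagates to all of $X$. Hence \emph{every} regular level of $\alpha$ is Levi flat, so the subcoronae $X_n=\{a_n<\alpha<b_n\}$ bounded by regular levels are relatively compact \emph{pseudoconvex} domains exhausting $X$, and Nishino's global result \cite{Nishino}*{III.5.B} applies directly.

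Your proposed route tries to run Nishino's normal-family argument inside arbitrary subcoronae $X_{\epsilon,m}$, but without knowing these are pseudoconvex there is no mechanism controlling the family near $\partial X_{\epsilon,m}$. The claim that the curves ``have uniformly bounded area --- this is exactly the point at which the genericity of $C$ \ldots\ is used'' is not how Nishino's proof works: genericity gives local propagation, while the area bounds and the exclusion of degenerations come from the plurisubharmonic exhaustion (pseudoconvexity). In a bare corona with no Levi-flatness of the boundary levels you have neither, so Bishop compactness is not justified and the closedness of $\Omega$ is unproven. The fix is exactly the paper's: use real-analyticity to make the subcoronae pseudoconvex first, then invoke Nishino.
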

\begin{proof}We can assume that $C$ is connected and that $\chi$ does not have critical points in $W$. By \cite{Mon-Slo-Tom}*{Lemma 4.1}, there exist a neighborhood $V$ of $C$ in $W$ and a holomorphic function $f:V\to\C$ such that $C=\{x\in V\ :\ f(x)=0\}$; therefore, the open set $V$ is foliated in compact curves. This means that
$$\de\debar\alpha\wedge\de\alpha\wedge \debar\alpha=0$$
on $V$, hence on $X$, by real analyticity. Therefore, every regular level set of $\alpha$ is Levi flat and foliated by immersed complex curves.

Let $\{a_n\}_{n\in\N}$, $\{b_n\}_{n\in\N}$ be sequences of regular values for $\alpha$ such that $a_n$ decreases to $0$ and $b_n$ increases to $+\infty$; then the coronae
$$X_n=\{x\in X\ :\ a_n<\alpha(x)<b_n\}$$
have Levi-flat boundaries, therefore they give an exhaustion of $X$ by relatively compact pseudoconvex sets.

We now apply \cite{Nishino}*{III.5.B} to obtain that $X$ is proper over an open complex curve.
\end{proof}

In the previous Lemma, we actually proved that $X$ is a \emph{false corona}, meaning that it admits not only the function $\alpha$, but also some other real-analytic plurisubharmonic function $\beta$ which is exhaustive on $X$.

%
We derive the following structure theorem
\begin{teo}\label{teo34}Let $X$ be a complex surface, $\alpha:X\to\R$ a real-analytic plurisubharmonic corona exhaustion function. Then the following three cases can occur:
\begin{itemize}
\item[1{\rm)}] $X$ is an increasing union of subcoronae with one strictly pseudoconvex boundary and $\Sigma_X$ is contained in countably many level sets of $\alpha$,
\item[2{\rm)}] $X$ is proper over a complex curve, therefore a false corona,
\item[3{\rm)}] $X$ is of Grauert type.
\end{itemize}
\end{teo}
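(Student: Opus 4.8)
The plan is to run the argument of \cite{Mon-Slo-Tom}*{Theorem 4.4} in the relative, two-ended setting of a corona. All the tools used there are local around a single level set of the exhaustion --- the local maximum property of the slices $\Sigma_X\cap\{\alpha=c\}$ (\cite{Mon-Slo-Tom}*{Theorem 3.2}) and their holomorphic disc structure (\cite{Mon-Slo-Tom}*{Proposition 3.5}), the slab propagation result \cite{Slo-Tom}*{Theorem 3.9} together with its corollary, the fact that a regular level containing a compact complex curve forces $X$ to be foliated by compact complex curves (\cite{Mon-Slo-Tom}*{Theorem 4.2}), and \cite{Mon-Slo-Tom}*{Corollary 4.3} providing a local pluriharmonic function $\lambda\circ\alpha$ along a Levi-flat regular level --- and since every level set of a corona exhaustion is compact, they carry over unchanged; the one genuinely global ingredient of \cite{Mon-Slo-Tom}, namely Nishino's propagation theorem, is replaced by Lemma \ref{lmm_nishino}. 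Note also that the regular values of $\alpha$ are dense in $(0,+\infty)$ with at most countable complement, since $\textrm{Crt}(\alpha)$ is a real-analytic subset of $X$ on each of whose connected components $\alpha$ is constant.

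The trichotomy is governed by the real-analytic top-degree form $\omega:=\de\debar\alpha\wedge\de\alpha\wedge\debar\alpha$. \emph{Suppose first that $\omega\equiv 0$ on $X$.} Then on $X\setminus\textrm{Crt}(\alpha)$ the Levi form of $\alpha$ degenerates along the complex tangent line to its level sets, so every regular level is a Levi-flat hypersurface along whose Levi foliation $\alpha$ behaves like a pluriharmonic function. If some regular level contains a compact complex curve $C$, then $X$ is foliated by compact complex curves by \cite{Mon-Slo-Tom}*{Theorem 4.2}; realizing the level component through $C$ as $\{\chi=0\}$ via \cite{Mon-Slo-Tom}*{Corollary 4.3} and applying Lemma \ref{lmm_nishino} shows that $X$ is proper over an open complex curve, hence a false corona: case $2$. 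If no regular level has a compact leaf, then --- exactly as in \cite{Mon-Slo-Tom}, where one shows that a compact Levi-flat level without compact leaves is foliated by dense leaves --- every regular level of $\alpha$ is Levi-flat and foliated by dense complex curves, so $X$ is of Grauert type: case $3$.

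\emph{Suppose now that $\omega\not\equiv 0$.} Then, $X$ being connected, $Z:=\{\omega=0\}$ is a nowhere dense real-analytic subset. By \cite{Mon-Slo-Tom}*{Proposition 3.5} every nonempty slice $\Sigma_X\cap\{\alpha=c\}$ with $c$ a regular value is laminated by holomorphic curves, hence $\Sigma_X\setminus\textrm{Crt}(\alpha)\subseteq Z$; one then shows, as in \cite{Mon-Slo-Tom}, that $\Sigma_X$ meets only countably many level sets of $\alpha$ --- for if it met uncountably many \emph{regular} levels, a minimality argument for laminated local maximum sets would force a whole connected component of one such level into $\Sigma_X$, whence, by the corollary to \cite{Slo-Tom}*{Theorem 3.9}, a connected component of a slab $\{s\le\alpha\le t\}$, a set with nonempty interior, would lie inside $\Sigma_X\subseteq Z$, contradicting nowhere density. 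Picking regular values $\epsilon_n\downarrow 0$ and $m_n\uparrow+\infty$ with level sets disjoint from $\Sigma_X$, a finite sum $v$ of smooth plurisubharmonic corona exhaustions is strictly plurisubharmonic on a neighborhood of each such (compact) level by definition of $\Sigma_X$, so $\alpha+\delta v$ is strictly plurisubharmonic there and a suitable level of it close to $m_n$ (respectively $\epsilon_n$) is a strictly pseudoconvex compact hypersurface; cutting along these exhibits $X$ as an increasing union of relatively compact subcoronae, each with a strictly pseudoconvex outer boundary component: case $1$.

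The step requiring the most care is the countability claim when $\omega\not\equiv 0$, i.e. the minimality argument that forces a full level component into $\Sigma_X$ once $\Sigma_X$ is met on too many levels; this is where the structure theory of laminated local maximum sets of \cite{Mon-Slo-Tom} is genuinely used rather than merely quoted. Secondary points to settle are: that the localized modification of $\alpha$ near a regular level disjoint from $\Sigma_X$ does not spoil relative compactness of the resulting subcoronae, so the exhausting sequence in case $1$ is legitimate; that in the case $\omega\equiv 0$ Lemma \ref{lmm_nishino} indeed applies, which, as in its own proof, requires first reducing to a connected curve $C$ and a function $\chi$ without critical points; and that ``dense leaves'' follows from ``no compact leaves'' for the regular levels, a fact imported directly from \cite{Mon-Slo-Tom}. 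The remainder is the bookkeeping already present there.
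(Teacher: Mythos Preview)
Your organization by the vanishing of $\omega=\de\debar\alpha\wedge\de\alpha\wedge\debar\alpha$ is a clean and legitimate alternative to the paper's case split (the paper instead branches on whether $\alpha(\Sigma_X)$ contains a half-line, then on whether a regular level meeting $\Sigma_X$ carries a compact curve, and only derives $\omega\equiv 0$ as a consequence). In the $\omega\equiv 0$ branch your argument is essentially the paper's, except that you shortcut directly to Lemma~\ref{lmm_nishino} without first producing uncountably many curves --- which is fine, since the hypotheses of that lemma are already in hand via \cite{Mon-Slo-Tom}*{Lemma 3.7/Corollary 4.3}. (One quibble: your citation of \cite{Mon-Slo-Tom}*{Theorem 4.2} there is not really warranted in the corona setting --- that theorem is stated for weakly complete surfaces; Lemma~\ref{lmm_nishino} is what does the work, and you do invoke it.)

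There is, however, a genuine gap in your $\omega\not\equiv 0$ branch, precisely at the ``minimality argument'' you flag as the delicate step. The dichotomy from \cite{Mon-Slo-Tom}*{Theorem 3.6} is a \emph{per-level} statement: for each regular level $Y$ meeting $\Sigma_X$, either $Y\subset\Sigma_X$ or $\Sigma_X\cap Y$ is a finite union of compact curves. Your slab argument correctly excludes the first alternative when $\omega\not\equiv 0$. But nothing you have said rules out the second alternative occurring on \emph{uncountably many} levels simultaneously: that scenario is perfectly compatible with $\Sigma_X\setminus\textrm{Crt}(\alpha)\subset Z$ and with $Z$ being nowhere dense, since a nowhere dense real-analytic set can easily have $\alpha$-image containing an interval. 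The paper closes this gap not by any laminated-local-maximum-set minimality, but by invoking Nishino again: uncountably many disjoint compact curves in a bounded region force one of them to be generic (\cite{Nishino}*{Propositions 9 and 7}), hence to have a neighborhood foliated by compact curves, whence $\omega$ vanishes on an open set and so $\omega\equiv 0$ by real analyticity --- contradicting the hypothesis of the branch. You need this step (or an equivalent one) to obtain countability; the ``minimality argument'' as described does not supply it.
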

\begin{proof}
First, we assume that there exists $a\in\R$ such that $[a,+\infty)\subseteq\alpha(\Sigma_X)$; if every regular level $\{x\in X\ :\ \alpha(x)=b\}$, for $b\geq a$, contains a compact curve, then we have uncountably many compact curves in $X$ and, by \cite{Nishino}*{Proposition 9 and 7}, one of these curves has a neighborhood foliated with compact curves; therefore, reasoning as in the proof of Lemma \ref{lmm_nishino}, one regular level set is foliated and Levi flat, hence by \cite{Mon-Slo-Tom}*{Lemma 3.7} it is the level set of a pluriharmonic function. Applying Lemma \ref{lmm_nishino}, we obtain that $X$ is proper over an open (possibly singular) complex curve.

\medskip

Let us suppose that there is a regular value $c\in\R$ for $\alpha$ such that the level set $Y=\{x\in X\ :\ \alpha(x)=c\}$ does not contain any compact complex curve and $Y\cap\Sigma_X\neq\emptyset$. Then, by \cite{Mon-Slo-Tom}*{Theorem 3.6}, $\Sigma_X\supseteq Y$, $Y$ is Levi flat and $\de\debar\alpha\wedge\de\alpha\wedge\debar\alpha=0$ on $X$. Hence, every regular level is Levi flat and, by \cite{Mon-Slo-Tom}*{Lemma 3.7}, is the (regular) level set of a pluriharmonic function; by the same reasoning used in the proof of \cite{Mon-Slo-Tom}*{Corollary 3.8}, $Y$ is foliated with dense complex curves.

Therefore, if such $c$ exists, no regular level set can contain a compact curve, otherwise it would propagate as we described earlier, by Lemma \ref{lmm_nishino}; so, every regular level is foliated with dense complex curves, i.e. $X$ is of Grauert type.

\medskip

The last remaining case to consider is when there exists a sequence of regular values that do not intersect $\Sigma_X$.

We note that if $\Sigma_X$ intersects a regular level which does not contain a compact curve, then $\Sigma_X=X$; likewise, if there are uncountably many levels that contain a compact curve, then $\Sigma_X=X$. Therefore, there are at most countably many regular level intersecting $\Sigma_X$ and they all contain a compact curve; let us denote by $\{c_n\}_{n\in\N}$ the sequence of associated regular values.

By \cite{Mon-Slo-Tom}*{Proposition 3.5}, $\Sigma_X\cap\{x\in X\ :\ \alpha(x)=c_n\}$ can locally be written as a union of holomorphic discs; reasoning as in \cite{Mon-Slo-Tom}*{Theorem 3.6}, we see that there are only two cases: either $\{x\in X\ :\ \alpha(x)=c_n\}$ is Levi flat and foliated in complex curves, or $\Sigma_X\cap\{x\in X\ :\ \alpha(x)=c_n\}$ is the union of finitely many connected compact complex curves. Were the level set Levi flat, it would be the zero set of a pluriharmonic function, but as it contains at least one compact curve, this would imply $\Sigma_X=X$, which is not the case.

Therefore, $\Sigma_X\cap\{x\in X\ :\ \alpha(x)=c_n\}$ is a union of finitely many compact curves, for every $n$. Finally, if $d\in\R$ is a singular value of $\alpha$ such that $\Sigma_X\cap\{x\in X\ :\ \alpha(x)=d\}\neq\emptyset$, then for every $p$ regular point in $\{x\in X\ :\ \alpha(x)=d\}$ there exists a neighborhood $U$ such that $U\cap\Sigma_X\cap\{x\in X\ :\ \alpha(x)=d\}$ is a union of finitely many complex discs.

In conclusion, if $\Sigma_X\neq X$, then $\Sigma_X$ is a countable union of compact curves and curves immersed in the singular levels, plus, maybe, the critical points of $\alpha$; moreover, $\alpha(\Sigma_X)$ is countable and (as $\alpha$ is proper) closed.  Therefore we can find intervals $I_n$ of regular values, containing arbitrarily large real numbers, such that $\alpha(\Sigma_X)\cap I_n=\emptyset$, so we can write $X$ as a union of open domains $\Omega_n=\{x\in X\ :\ \alpha(x)<t_n\}$ with a strictly pseudoconvex boundary such that $\Sigma_X\cap \Omega_n$ is a union of compact curves, immersed curves in singular levels and critical points of $\alpha$.
\end{proof}

One could ask whether, in cases $1$ and $3$, it could be possible to ''fill the hole'' and see the corona as a subdomain of a weakly complete surface. We show that, at least in the first case, it is not possible.

We consider an example taken from Section 6 of \cite{Rossi}; we will briefly recall the construction. Let $M$ be the complex manifold diffeomorphic to $\C^2\setminus\{(0,0)\}$ and endowed with the unique complex structure such that the $2$-form
$$\phi=dz_1\wedge dz_2+\epsilon\de\overline{\de}\log(|z_1|^2+|z_2|^2)$$
is holomorphic (such a complex structure exists and is unique by a theorem of Andreotti). A function $f:M\to\C$ is holomorphic if and only if $df\wedge \phi=0$ and one can verify that the functions
$$v_{1}=\frac{z_1^2}{2}-\frac{\epsilon z_1\overline{z}_2}{|z_1|^2+|z_2|^2}\qquad v_2=\frac{z_2^2}{2}-\frac{\epsilon z_2\overline{z}_1}{|z_1|^2+|z_2|^2}\qquad v_3=\frac{z_1z_2}{2}-\frac{\epsilon z_2\overline{z}_2}{|z_1|^2+|z_2|^2}$$
are holomorphic on $M$ and satisfy $v_1^2=v_2v_3$. Hence, the map $v:M\to\C^3$ given by $v=(v_1,v_2,v_3)$ sends $M$ on the quadratic cone $K=\{(x,y,z)\in\C^3\ :\ x^2=yz\}$ minus the origin; it is easy to check that $v(z_1,z_2)=v(w_1,w_2)$ if and only if $z_1=\pm w_1$, $z_2=\pm w_2$, hence $v$ is a $2$-to-$1$ covering map.

If one considers the blow-ups $Q$ of $\C^2$ in $(0,0)$ and $Q'$ of $K$ in $(0,0,0)$, then we have a $2$-to-$1$ map from $Q\setminus Z$ to $Q'\setminus Z'$, where $Z$, $Z'$ are the respective exceptional divisors; however, the induced map $M\mapsto Q'\setminus Z'$ is not holomorphic. We consider $Q'$ as the total space of a line bundle over $\mathbb{CP}^1$, chosing coordinates $[x_1:x_2]$ on $\mathbb{CP}^1$ and coordinates $y_i$ for the fibers on $U_i=\{x_i\neq 0\}$; the transition function is then $y_2=x_1^2y_1$ on $U_1\cap U_2$. We define new coordinates
$$\tilde{y}_1=\frac{y_1}{2}-\frac{\epsilon \bar{x}_1}{1+|x_1|^2}\textrm{ on }U_1$$
$$\tilde{y}_2=\frac{y_2}{2}-\frac{\epsilon \bar{x}_2}{1+|x_2|^2}\textrm{ on }U_2$$
with transition function $\tilde{y}_2=(x_1^2+\epsilon)\tilde{y}_1$. Let $\tilde{Q}$ be the complex manifold obtained from $Q'$ with this new choice of holomorphic coordinates, then the map $M\to \tilde{Q}$ is holomorphic and $\tilde{Q}$ is Stein. Indeed, $\tilde{Q}$ can be embedded in $\C^3$ as the hypersurface $\{(w_1,w_2,w_3)\in\C^3\ :\ w_3(w_3+\epsilon)=w_1w_2\}$.

The zero section $Z'$ of $Q'$ becomes a real-analytic submanifold $A$ of $\tilde{Q}$, which is no longer complex and, actually, totally real: in a chart $U_i$, it is the graph of one of the functions $f_\pm:\C\to\C$,
$$f_\pm(z)=\pm\frac{\epsilon \bar{z}}{1+|z|^2}\;.$$
Therefore, the function $\phi(w)=\mathrm{dist}(w, A)^2$ is zero only on $A$, together with its gradient, and is strictly plurisubharmonic on $\tilde{Q}\setminus A$; the pullback $\psi$ to $M$ is a real-analytic strictly plurisubharmonic function with no critical points on $M$. The level sets $\{p\in M\ :\ \psi(p)=t\}$ for $t$ small enough are compact, as they bound a basis of neighborhoods of $(0,0)$ in $\C^2$, therefore every level set of $\psi$ is compact.

Therefore, $M$ is a corona with a real-analytic strictly plurisubharmonic corona exhaustion function and, by \cite{Rossi}, $M$ cannot be embedded as an open domain in a Stein space $\tilde{M}$ such that the complement is compact.

\begin{question}Is it true that, in case 1, $\Sigma_X$ is the union of countably many curves? Are these curve negative in the sense of Grauert?\end{question}

\begin{question}Can one produce a similar example for case 3, i.e. for Grauert-type coronae?\end{question}

\subsection{Brunella's example}\label{Brunella}
In \cite{Brunella}, Brunella gives an example of a family of weakly complete complex surfaces which do not admit any real-analytic plurisubharmonic exhaustion function; the main idea of his construction is the following.

\begin{pro}Let us consider a compact complex surface $S$ containing a compact curve $C$ which belongs to Ueda's class $\beta''$ and suppose that $S$ does not admit any holomorphic foliation tangent to $C$ and free of singularities along $C$, then $S\setminus C$ is weakly complete, but does not admit any real-analytic plurisubharmonic exhaustion function.\end{pro}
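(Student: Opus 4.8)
The plan is to couple Ueda's local picture around a Class~$\beta''$ curve (Section~\ref{Ue}) with the trichotomy of Theorem~\ref{teo_Main}. Fix a neighbourhood $V$ of $C$, a multivalued holomorphic $u$ on $V$ with $|u|$ single-valued and divisor $C$, the associated Ueda foliation $\fcal$ of $V$, and the Levi-flat tubes $\Sigma_r=\{p\in V:|u(p)|=r\}$ around $C$; recall that $\fcal$ is tangent to $C$ and nonsingular along $C$ (since $du\neq 0$ on $C$), that each $\Sigma_r$ is Levi flat with dense $\fcal$-leaves, and that every plurisubharmonic function defined near $\Sigma_r$ is constant on it. For weak completeness, normalise the sign so that $h:=-\log|u|$ is pluriharmonic on $V\setminus C$ with $h\to+\infty$ at $C$; since $S$ is compact, $C$ is the only end of $S\setminus C$, so, gluing $\max(h,M)$ near $C$ to the constant $M$ elsewhere (for $M$ large), one obtains a continuous plurisubharmonic exhaustion of $S\setminus C$ (equivalently, $S\setminus C$ carries an exhaustion that is plurisubharmonic off a compact set, hence is weakly complete by the remark preceding Theorem~\ref{teo_buco}).

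Assume now, for a contradiction, that $\alpha\colon S\setminus C\to\R$ is a real-analytic plurisubharmonic exhaustion, and apply Theorem~\ref{teo_Main} to the (we may assume non-singular) surface $S\setminus C$. First observe that $\ocal(S\setminus C)=\C$: any $f\in\ocal(S\setminus C)$ has plurisubharmonic real and imaginary parts, hence is constant on every tube $\Sigma_r$ by property~($\beta''$); therefore $f$ depends only on $|u|$ near $C$, and being holomorphic it is locally constant on $V\setminus C$, so constant on the connected surface $S\setminus C$ by the identity principle. This excludes case~i) (a modification of a $2$-dimensional Stein space carries many holomorphic functions) and case~ii) (an open Riemann surface is Stein, and its nonconstant holomorphic functions would pull back).

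Hence case~iii) holds: the regular levels of $\alpha$ are compact Levi-flat hypersurfaces foliated by dense complex leaves. For $t$ large, $\{\alpha=t\}$ is such a hypersurface contained in $V\setminus C$; since $\log|u|$ is constant on it, it is contained in — hence by dimension equals — some tube $\Sigma_r$. Thus the Levi foliation $\mathcal{G}$ of the regular levels of $\alpha$ agrees with $\fcal$ on a punctured neighbourhood of $C$, and in particular is holomorphic and nonsingular there. Now invoke the pluriharmonic function attached to a Grauert-type surface (the Proposition following Theorem~\ref{teo_Main}): a proper pluriharmonic $\chi$ on $S\setminus C$, or one on a double cover $\pi\colon X^*\to S\setminus C$. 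Being pluriharmonic, $\chi$ is constant on each tube, hence an \emph{affine} function of $\log|u|$ there (differentiate $\chi=g(\log|u|)$ twice and use pluriharmonicity of both sides to get $g''=0$); consequently $\de\chi$ does not vanish near $C$, so the complex-analytic set $\textrm{Crt}(\chi)=\{\de\chi=0\}$ — or its image under $\pi$ in the covering case — avoids a neighbourhood of $C$. On the complement of this complex-analytic subset, which contains a neighbourhood of $C$, the leaves of $\mathcal{G}$ form a holomorphic foliation coinciding with $\fcal$ near $C$; it extends across $\textrm{Crt}(\chi)$ to a singular holomorphic foliation of $S$, and this foliation is tangent to $C$ and free of singularities along $C$ because it equals $\fcal$ there — contradicting the hypothesis on $S$, and completing the proof.

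The substantial step is the third paragraph: identifying the globally defined Levi foliation of the Grauert-type structure with the local Ueda foliation near $C$, and then extending it holomorphically across the singular locus of the exhaustion without creating singularities along $C$. Passing through the pluriharmonic companion $\chi$ rather than $\alpha$ itself is what confines the bad set to a complex-analytic subset (of real codimension $\geq 2$), so that the extension of the foliation is automatic; the double-cover case~iii-b) additionally requires noting that the foliation still descends to $S\setminus C$ and that the pushed-forward critical locus is again analytic and disjoint from a neighbourhood of $C$.
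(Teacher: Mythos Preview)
Your argument is correct and follows the same overall strategy as the paper: build the exhaustion from Ueda's pluriharmonic $\log|u|$, apply the trichotomy of Theorem~\ref{teo_Main}, land in case~iii), and extract from the Grauert-type structure a holomorphic foliation tangent to $C$ and nonsingular along it. The differences are in execution rather than in conception.

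Two points of comparison. First, the paper excludes cases~i) and~ii) more cheaply than you do: rather than proving $\ocal(S\setminus C)=\C$, it simply observes that the compact Levi-flat hypersurfaces $\Sigma_r$ with dense complex leaves already sit inside $S\setminus C$, and such objects cannot exist in a modification of a Stein space or in a surface proper over a curve. Your route via $\ocal=\C$ is valid but longer. Second, the paper is terse where you are careful: it asserts in one sentence that ``the leaves of the Levi foliation give a holomorphic foliation of $S\setminus C$ which is tangent to $C$ and free of singularities along it'', implicitly leaning on the structural results of \cite{Mon-Slo-Tom1}, whereas you unpack this through the pluriharmonic companion~$\chi$, the holomorphic $1$-form $\partial\chi$, and the identification with the Ueda foliation near~$C$. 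Your version is more self-contained; the paper's buys brevity at the cost of a pointer to the literature.

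One small gap in your write-up: in case~iii-a) the function $\chi$ is only defined on $(S\setminus C)\setminus M$, so when you speak of extending the foliation across $\textrm{Crt}(\chi)$ to all of $S$ you have silently skipped the curve $M$. This is harmless for the conclusion---near $M$ the form $\partial\chi$ is meromorphic and still defines a singular holomorphic foliation after clearing poles, and in any case $M$ is disjoint from a neighbourhood of $C$, so nonsingularity along $C$ is unaffected---but it deserves a sentence.
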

\begin{proof}By \cite{Ueda}, there exist a neighborhood $V$ of $C$ in $S$ and a function $u:V\to\R$ such that $u$ is pluriharmonic and $C=\{p\in V\ :\ u(p)=0\}$; moreover, the levels of $u$ are Levi flat and foliated with dense leaves. We consider the function $-\log u$, defined on $V\setminus C$ and we extend it to $S\setminus C$ setting it constant outside of some set $\{p\in V\ :\ \log u(p)\geq c\}$; we can do so in a smooth way.

Therefore, $S\setminus C$ is weakly complete. On the other hand, were it possible to produce a real-analytic plurisubharmonic exhaustion function, we could apply our Theorem \ref{teo_Main} and deduce that, as $S\setminus C$ contains some compact Levi-flat hypersurfaces with dense complex leaves, $S\setminus C$ has to be a Grauert type surface. The leaves of the Levi foliation give a holomorphic foliation of $S\setminus C$ which is tangent to $C$ and free of singularities along it. As this is impossible, we cannot have any real-analytic plurisubharmonic exhaustion function on $S\setminus C$.
\end{proof}

An extensive classification of foliation on surfaces was carried on in \cite{McQuillan} and we also refer to \cite{Brunella2} for the case of projective surfaces; Brunella in \cite{Brunella} considers the following explicit example. Let $C_0\subset \mathbb{CP}^2$ be a smooth elliptic curve and define $S$ as the blow-up of $\mathbb{CP}^2$ in $9$ points belonging to $C_0$ and $C\subset S$ as the strict transform of $C_0$; obviously, $(C^2)=0$. Moreover, if the $9$ points are generic, then $C\subset S$ belongs to Ueda's class $\beta''$.

From the classification of foliations mentioned above, we know that on $S$ there is no foliation tangent to $C$ with no singularities along it (see also Proposition 8 in \cite{Brunella}).

\medskip

This example by Brunella tells us that the existence of real-analytic plurisubharmonic exhaustion is not ensured by weakly completeness; however, the surface $S\setminus C$ could still be of Grauert type, i.e. union of Levi-flat hypersurfaces, where the leaves of the Levi foliations are dense, but they do not constitute a holomorphic foliation of the whole $S\smallsetminus C$.

In $S\setminus C$, we have a compact set $\Sigma\Subset S\smallsetminus C$ such that, for every neighborhood $\Omega\Subset S\smallsetminus C$ of $\Sigma$, we can produce a smooth plurisubharmonic exhaustion which is real analytic outside $\Omega$ and vanished on $\Sigma$, for every neighborhood of $\Sigma$. The complement of $\Sigma$ is the maximal pseudoflat neighborhood of $C$ in $S$.

\begin{question}What can we say about $\Sigma$? Does it have an interior? If so, what can we say about the interior? Is it weakly complete, holomorphically convex, Stein, none of the previous? Does its boundary carry any kind of analytic structure?\end{question}

\section{Minimal kernels and the structure of complex manifolds}\label{struct}

The nature of the minimal kernel of a weakly complete manifold seems to be strictly linked to its geometry and to the presence of analytic objects, at least in dimension $2$; as we recounted in the previous pages, general precise results are known only in presence of a real-analytic exhaustion. However, there are some observations that we can make for a general weakly complete surface, for example in the particular case when the minimal kernel is the whole surface.

Moreover, the notion of minimal kernel has been generalized by Slodkowski in \cite{Slo} to an arbitrary complex manifold and to other classes of plurisubharmonic function, retrieving in the process also the notion of core of a domain; Slodkowski managed to show a decomposition theorem for these generalizations of the minimal kernel, where the components are pseudoconcave and every plurisubharmonic function in the class considered is constant along each of them.

We present yet another ``kernel'' and we employ the construction and the computations in \cite{Die-For} to show that, if such kernel is empty, a bounded, smooth, pseudoconvex domain is a modification of a Stein space.

\subsection{Complex surfaces with a smooth exhaustion}\label{smoothcase}
As already observed the classification of weakly complete surfaces in the general case is very hard  and it is not even clear that the three cases of Theorem \ref{teo_Main} are the only possible.

We can nonetheless say something in some particular cases. In \cites{Mon, Mon-Slo}, the following situation was studied: $X$ is a weakly complete surface with a real-analytic plurisubharmonic exhaustion function and $\Omega\subset X$ is a domain with a smooth (in \cite{Mon}), or continuous (in \cite{Mon-Slo}), plurisubharmonic exhaustion function. Then, $\Omega$ falls into one of the three cases of Theorem \ref{teo_Main}, so it possesses a real-analytic plurisubharmonic exhaustion function.

\medskip

If $\Sigma_X$ is compact in $X$, it is quite easy and classical to prove that $X$ is a modification of a Stein space (see \cite{Mon-Slo}*{Lemma 2.1}). At the other end of the spectrum, we have the case when all the plurisubharmonic exhaustions on $X$ have everywhere degenerate Levi-form. Suppose $X$ is a complex surface endowed with a smooth plurisubharmonic exhaustion $\phi:X\to\R$, moreover assume that $\Sigma_X=X$.

We recall that, given a $k$-form $\alpha$ on a vector space $E$, the \emph{kernel} of $\alpha$ is defined as
$$\ker(\alpha)=\{v\in E\ :\ \alpha(v,v_2,\ldots, v_k)=0\ \textrm{ for all }v_2,\ldots, v_k\in E\}\;.$$
We have the following elementary lemma.

\begin{lem}\label{lmm_forme}Let $M$ be a real manifold, $\omega$ a $k$-form and $\alpha$ a $1$-form; if $\alpha\wedge\omega=0$, then for every $p\in M$ such that $\omega(p)\neq 0$ as a $k$-linear alternating form on $T_pM$, $\ker\alpha(p)\supset\ker\omega(p)$ as vector spaces in $T_pM$.\end{lem}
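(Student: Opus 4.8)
The statement is a pointwise linear-algebra fact about exterior forms, disguised in the language of manifolds; the plan is to localize to a single tangent space and argue with multilinear algebra.

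First I would fix a point $p\in M$ with $\omega(p)\neq 0$ and write $E=T_pM$, $\omega=\omega(p)\in\Lambda^k E^*$, $\alpha=\alpha(p)\in E^*$. The hypothesis $\alpha\wedge\omega=0$ holds at $p$, and I want to show $\ker\omega\subseteq\ker\alpha$. Recall $\ker\alpha=\{v\in E:\alpha(v)=0\}$ is just the hyperplane annihilated by the covector $\alpha$ (or all of $E$ if $\alpha=0$, in which case the inclusion is trivial). So assume $\alpha\neq 0$. Then $\ker\omega\subseteq\ker\alpha$ is equivalent to saying that $\alpha$ vanishes on $\ker\omega$, i.e.\ (up to scalar) $\alpha$ is one of the linear functionals ``dual'' to the non-kernel directions of $\omega$.

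The key step is the following standard normal-form observation. Since $\omega\neq 0$, let $W=\ker\omega\subseteq E$ and pick a complement, so $E=W\oplus V$ with $\dim V=r$; then $\omega$ is a nonzero element of $\Lambda^k V^*\subseteq\Lambda^k E^*$ (viewing $V^*$ inside $E^*$ via the projection $E\to V$), and in fact $\omega$ involves ``all of'' the coordinates on $V$ in the sense that $\iota_v\omega\neq 0$ for every nonzero $v\in V$. Now suppose, for contradiction, that $\alpha$ does not vanish on $W$: then there is $w_0\in W$ with $\alpha(w_0)\neq 0$, and we can choose a basis $e_1,\dots,e_n$ of $E$ with $e_1=w_0\in W$ and $e_2,\dots,e_{r+1}$ a basis of $V$. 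Writing $\omega$ in the dual basis, $\omega$ is a combination of wedge monomials in $e_2^*,\dots,e_{r+1}^*$ only, and at least one such monomial, say $e_2^*\wedge\cdots\wedge e_{k+1}^*$, appears with nonzero coefficient (after relabeling the $V$-basis). Then in $\alpha\wedge\omega$ the monomial $e_1^*\wedge e_2^*\wedge\cdots\wedge e_{k+1}^*$ appears with coefficient $\alpha(e_1)\cdot(\text{coeff})\neq 0$, since no other term of $\alpha\wedge\omega$ can produce that particular monomial (the $\omega$-part never contains $e_1^*$). This contradicts $\alpha\wedge\omega=0$. Hence $\alpha$ vanishes on $W=\ker\omega$, which is exactly $\ker\omega\subseteq\ker\alpha$.

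The only mildly delicate point — the ``main obstacle,'' such as it is — is justifying that $\omega\in\Lambda^kV^*$ with $V$ a genuine complement of $\ker\omega$ and that some monomial not involving $e_1^*$ survives; this is the claim that wedging with the single covector $e_1^*$ (extended to a full coordinate wedge) injects the relevant coefficient. I would handle it cleanly by choosing coordinates adapted to $\ker\omega$ and simply reading off coefficients, or, even more slickly, by contracting: if $\alpha(w_0)\neq 0$ for $w_0\in\ker\omega$, apply the interior product $\iota_{w_0}$ to $\alpha\wedge\omega=0$ to get $\alpha(w_0)\,\omega-\alpha\wedge(\iota_{w_0}\omega)=0$; but $\iota_{w_0}\omega=0$ since $w_0\in\ker\omega$, so $\alpha(w_0)\,\omega=0$, forcing $\omega=0$, a contradiction. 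This last line is probably the shortest route and is the version I would write up. Everything is pointwise, so no analytic input is needed and the manifold structure is irrelevant beyond providing the tangent spaces.
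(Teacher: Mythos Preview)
Your proposal is correct, and the contraction argument you single out at the end is essentially the paper's own proof: the paper takes $X_0\in\ker\omega(p)$, expands $(\alpha\wedge\omega)(p)[X_0,X_1,\dots,X_k]$ as an alternating sum, observes that every term in which $X_0$ lands inside $\omega$ vanishes, and is left with $n!\,\alpha(p)[X_0]\,\omega(p)[X_1,\dots,X_k]$, which is exactly your identity $\iota_{w_0}(\alpha\wedge\omega)=\alpha(w_0)\,\omega$ written out by hand. Your preliminary basis/monomial argument works too but is longer than necessary; the interior-product line is the one to keep.
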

\begin{proof} Let $X_0\in\ker\omega(p)$ and take any $X_1,\ldots, X_k\in T_pM$; denote by $S$ the set of permutations $\sigma$ of $\{0,\ldots, n\}$ such that $\sigma(0)\neq 0$ and by $T$ the set of permutations $\tau$ of $\{1,\ldots, n\}$. Then
$$0=(\alpha\wedge\omega)(p)[X_0,X_1,\ldots, X_k]=\sum_{\tau\in T}(-1)^{|\tau|}\alpha(p)[X_0]\omega(p)[X_{\tau(1)},\ldots, X_{\tau(n)}]+$$
$$+\sum_{\sigma}\alpha(p)[X_{\sigma(0)}]\omega(p)[X_{\sigma(1)},\ldots,X_{\sigma(n)}]=$$
$$=\sum_{\tau\in T}(-1)^{|\tau|}\alpha(p)[X_0]\omega(p)[X_{\tau(1)},\ldots, X_{\tau(n)}]=n!\alpha(p)[X_0]\omega(p)[X_1,\ldots, X_n]$$
therefore, as $X_1,\ldots, X_n$ are generic and $\omega(p)\neq 0$, we need to have $\alpha(p)[X_0]=0$, so $\ker\alpha(p)\subset\ker\omega(p)$.
\end{proof}

We have an analogue of \cite{Mon-Slo-Tom}*{Lemma 5.3} for the smooth case.

\begin{lem}\label{lmm_funz}Let $W$ be a complex manifold of complex dimension $2$ and $\beta:W\to\R$ a smooth function such that
$$\de\debar\beta\wedge\de\beta=\de\debar\beta\wedge\debar\beta=0\;.$$
Suppose $|d\beta|\neq 0$ on $W$, then there exists $\mu:W\to\R$ such that
$$\de\debar\beta=\mu\de\beta\wedge\debar\beta\;.$$
\end{lem}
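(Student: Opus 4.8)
The plan is to reduce the statement to a pointwise computation in a well-chosen coframe. Since $\beta$ is real, $\debar\beta=\overline{\de\beta}$ and $|d\beta|^2=2|\de\beta|^2$, so the hypothesis $|d\beta|\neq 0$ says precisely that the $(1,0)$-form $\de\beta$ is nowhere zero; in particular $\de\beta\wedge\debar\beta$ is a nowhere vanishing $(1,1)$-form on $W$. Consequently a real function $\mu$ with $\de\debar\beta=\mu\,\de\beta\wedge\debar\beta$ is unique if it exists, and the quotient $\de\debar\beta/(\de\beta\wedge\debar\beta)$ makes invariant sense; so it suffices to produce such a $\mu$ near each point, and the local pieces will automatically glue to a single global function, smooth as soon as it is smooth locally.

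First I would fix $p\in W$ and choose on a neighborhood $U$ of $p$ a smooth $(1,0)$-coframe $\theta^1,\theta^2$ with $\theta^1=\de\beta$ (possible since $\de\beta(p)\neq 0$). Then $\de\beta\wedge\debar\beta=\theta^1\wedge\overline{\theta^1}$, and I would write $\de\debar\beta=\sum_{i,j}H_{i\bar j}\,\theta^i\wedge\overline{\theta^j}$ with smooth functions $H_{i\bar j}$ on $U$; since $i\de\debar\beta$ is a real $2$-form, the matrix $(H_{i\bar j})$ is Hermitian, $\overline{H_{i\bar j}}=H_{j\bar i}$, and in particular $H_{1\bar 1}$ is real.

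The next step is to expand the hypothesis $\de\debar\beta\wedge\de\beta=0$. In complex dimension $2$ this is an identity between $(2,1)$-forms, and in the coframe it reads $\de\debar\beta\wedge\de\beta=\pm\sum_{j}H_{2\bar j}\,\theta^1\wedge\theta^2\wedge\overline{\theta^j}$; as $\theta^1\wedge\theta^2\wedge\overline{\theta^1}$ and $\theta^1\wedge\theta^2\wedge\overline{\theta^2}$ are linearly independent, the hypothesis forces $H_{2\bar 1}=H_{2\bar 2}=0$, hence also $H_{1\bar 2}=\overline{H_{2\bar 1}}=0$. Therefore, on $U$,
\[
\de\debar\beta=H_{1\bar 1}\,\theta^1\wedge\overline{\theta^1}=H_{1\bar 1}\,\de\beta\wedge\debar\beta ,
\]
so $\mu:=H_{1\bar 1}$ works near $p$, and by the uniqueness noted above these local choices patch to one global smooth real-valued $\mu$ on $W$. (The second hypothesis $\de\debar\beta\wedge\debar\beta=0$ is in fact redundant here: since $\overline{\de\debar\beta}=-\de\debar\beta$ it is the complex conjugate of $\de\debar\beta\wedge\de\beta=0$.)

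The computation is elementary, and there is no genuinely hard step; the two things that require care are (i) arranging $\theta^1=\de\beta$, which is exactly what lets the single condition $\de\debar\beta\wedge\de\beta=0$ annihilate a whole ``row'' of the Hermitian matrix rather than merely show it is degenerate, and (ii) verifying that the locally defined $\mu$ is globally well defined, real, and smooth, which rests entirely on $|d\beta|\neq 0$ making $\de\beta\wedge\debar\beta$ nowhere zero. An alternative, in the spirit of Lemma \ref{lmm_forme}, is to note that $\de\debar\beta\wedge d\beta=0$ and deduce, where $\de\debar\beta\neq 0$, that $\ker(i\de\debar\beta)\subseteq\ker(d\beta)$, concluding after the analogous linear-algebra bookkeeping; I would nonetheless present the coframe computation as the main argument, as it is the most direct.
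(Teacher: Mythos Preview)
Your argument is correct. The coframe computation is clean: completing $\theta^1=\de\beta$ to a $(1,0)$-coframe, expanding $\de\debar\beta\wedge\de\beta$ to kill the second row of the Hermitian matrix, and then invoking Hermitian symmetry to kill $H_{1\bar 2}$ is exactly what is needed, and the global well-definedness of $\mu$ follows from the nonvanishing of $\de\beta\wedge\debar\beta$ as you say. Your remark that $\de\debar\beta\wedge\debar\beta=0$ is just the conjugate of the other hypothesis is also correct and worth noting.

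The paper proceeds differently: it applies the preceding Lemma~\ref{lmm_forme} (with $\omega=\de\debar\beta$ and $\alpha$ equal to $\de\beta$, resp.\ $\debar\beta$) to conclude that, wherever $\de\debar\beta\neq 0$, one has $\ker(\de\debar\beta)\subseteq\ker(\de\beta)\cap\ker(\debar\beta)=\ker(\de\beta\wedge\debar\beta)$, and then observes that two real $(1,1)$-forms in complex dimension $2$ with this kernel relation must differ by a real scalar. This is precisely the alternative you sketch at the end. Your coframe approach has the advantage of being entirely self-contained and of making the smoothness and realness of $\mu$ explicit; the paper's kernel argument is shorter but leaves the final linear-algebra step (that a rank-one Hermitian form is determined up to a real scalar by its kernel) implicit. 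Either route is perfectly adequate here.
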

\begin{proof} Applying Lemma \ref{lmm_forme}, we have that
$$\ker(\de\debar\beta)\subseteq \ker(\de\beta)\cap\ker(\debar\beta)=\ker(\de\beta\wedge\debar\beta)\;
$$
whenever $\de\debar\beta\neq 0$.
Therefore, as they both are real $(1,1)$-forms, they differ by a smooth function $\mu:W\to\R$.
\end{proof}

%
%

\begin{rem}If $\beta$ is plurisubharmonic,  then $\mu$ is non negative and there exists $\theta:\R\to\R$ such that $\theta\circ\mu$ is plurisubharmonic; moreover, the levels of $\beta$ are Levi flat and, by the last observation, $\mu$ is constant on the leaves of the Levi foliation, therefore also the levels of $\mu$ are Levi flat.\end{rem}

We are now in the position to state and prove our result about the structure of $X$.

\begin{teo}\label{teo_smooth}Let $X$ be a complex surface with a smooth plurisubharmonic exhaustion function $\phi$; suppose that $\Sigma_X=X$. Then, either $X$ is proper on an open Riemann surface or the connected components of the regular level sets of $\phi$ are Levi flat and foliated in dense complex curves, i.e. $X$ is of Grauert type.
In the latter case, whenever $r,s\in\R$ are such that $(r,s)$ is an interval of regular values and $W\subset\{r<\phi<s\}$ is such that $\phi\vert_W$ is proper and has connected level sets, there exists a pluriharmonic function $\chi: W\to\R$ such that $\chi=\lambda\circ\phi$.\end{teo}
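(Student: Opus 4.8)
The plan is to reduce everything to the local picture near a regular level of $\phi$, using the hypothesis $\Sigma_X=X$ as a substitute for the real-analytic ``bridge'' of \cite{Mon-Slo-Tom}. First I would observe that, since $\Sigma_X=X$, every connected component $Y$ of every regular level set of $\phi$ is contained in $\Sigma_X$; by \cite{Mon-Slo-Tom}*{Proposition 3.5} it is then locally a union of graphs of holomorphic functions, hence a Levi-flat hypersurface carrying its Levi foliation by complex curves. Since $Y$ is a regular level of a plurisubharmonic function, the Hessian of $\phi$ is positive semidefinite with the complex tangent direction of $Y$ in its kernel, so it has rank $\le 1$ with image spanned by $\de\phi$; equivalently $\de\debar\phi\wedge\de\phi=\de\debar\phi\wedge\debar\phi=0$ on $Y$. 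Consequently, on every band $B=\{r<\phi<s\}$ of regular values Lemma~\ref{lmm_funz} applies and yields $\mu\ge 0$ with $\de\debar\phi=\mu\,\de\phi\wedge\debar\phi$, and $\mu$ is constant on the Levi leaves (Remark after Lemma~\ref{lmm_funz}). A one-line computation — $\debar Z=(\mu\debar\phi)\otimes Z$ for the $(1,0)$-field $Z=-\phi_{z_2}\de_{z_1}+\phi_{z_1}\de_{z_2}$ spanning $\ker\de\phi$ — shows that $\ker\de\phi$ is a holomorphic line subbundle of $T^{1,0}B$, so the Levi foliations of the individual levels glue into a nonsingular holomorphic foliation $\fcal_B$ of $B$, each leaf of which lies in a single level set of $\phi$.

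Next I would analyse the holonomy of $\fcal_B$ along a holomorphic transversal $T$: it consists of holomorphic germs, and since leaves lie in level sets these germs preserve the submersion $\phi|_T$. A holomorphic germ $h$ fixing a point $p$ with $\phi\circ h=\phi$ must have $h'(p)=1$ — its differential is multiplication by $h'(p)$ and must fix the nonzero real functional $d(\phi|_T)_p$ — and then, writing $h(\zeta)=\zeta+a_k\zeta^k+\cdots$, the leading term of $\phi|_T\circ h-\phi|_T$ is the degree-$k$ homogeneous function $d(\phi|_T)_p(a_k\zeta^k)$, which is not identically zero unless $a_k=0$; hence $h=\mathrm{id}$. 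So the Levi foliation of every regular level component $Y$ is holonomy-free. Now $Y$ is a compact $3$-manifold with a smooth, holonomy-free, codimension-one foliation by complex curves, and I would run the dichotomy there. If some such $Y$ contains a compact complex curve, that curve is a union of compact leaves; Reeb stability makes a saturated neighborhood of such a leaf in $\fcal_B$ a trivial bundle, producing an uncountable family of pairwise disjoint compact complex curves in a relatively compact domain, so by Nishino's theorem (Section~\ref{Nino}) $X$ is proper over an open Riemann surface — the first alternative. Otherwise no regular level contains a compact complex curve; then on each $Y$ a minimal set of the foliation is neither a compact leaf nor — by Sacksteder's theorem, since no leaf has nontrivial (a fortiori hyperbolic) holonomy — an exceptional minimal set, hence it is all of $Y$; so every Levi leaf is dense in its component and $X$ is of Grauert type.

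In the Grauert-type case the final assertion is then routine: on $W\subset\{r<\phi<s\}$ with $\phi|_W$ proper with connected levels, $\mu$ (from the first step) is constant on the dense Levi leaves of each level $\{\phi=t\}\cap W$, hence $\mu=\nu\circ\phi$ on $W$ for a smooth $\nu$; choosing $\lambda$ with $\lambda''+\nu\lambda'=0$ and $\lambda'>0$, the function $\chi=\lambda\circ\phi$ satisfies $\de\debar\chi=\big(\lambda'(\phi)\mu+\lambda''(\phi)\big)\de\phi\wedge\debar\phi=0$, so $\chi$ is the desired pluriharmonic function. The hard part is the holonomy-and-foliation analysis of the third paragraph: in the real-analytic situation of \cite{Mon-Slo-Tom} leaf closures are analytic sets and the ``compact or dense'' alternative is immediate, whereas in the merely smooth case one must genuinely exclude exceptional minimal sets — and this is exactly what the triviality of the holonomy (forced because $\phi$ is a submersion preserved by the holonomy) delivers.
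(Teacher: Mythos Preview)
Your argument is correct, but the route differs substantially from the paper's. The paper also starts from $\de\debar\phi\wedge\de\phi=0$ (obtained via \cite{Slo-Tom}*{Theorem 3.9}) and from Lemma~\ref{lmm_funz}, but then runs the dichotomy \emph{through the function $\mu$ itself}: if $d\mu\wedge d\phi\not\equiv 0$, the map $F=(\phi,\mu):W\to\R^2$ has, by Sard, uncountably many regular values, and each regular fibre is a compact real surface with complex tangent $\ker\de\phi\cap\ker\debar\phi$, hence a compact complex curve; Nishino then gives the first alternative. If instead $d\mu\wedge d\phi\equiv 0$, one writes $\mu=m\circ\phi$, solves $\lambda''+m\lambda'=0$ to obtain the pluriharmonic $\chi=\lambda\circ\phi$ \emph{first}, and only then argues density of the leaves via the closed transverse form $d^c\chi$ and the argument of \cite{Mon-Slo-Tom}*{Corollary 3.8}. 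Your proof replaces both steps by foliation dynamics: you show $\ker\de\phi$ is a genuine holomorphic foliation with trivial holonomy (a structural fact the paper never states), and then invoke Reeb stability for the ``compact leaf'' branch and Sacksteder's theorem to exclude exceptional minimal sets in the other branch. The paper's approach is more elementary and stays within the circle of ideas of \cites{Slo-Tom, Mon-Slo-Tom}; yours imports heavier machinery but yields extra information --- the holomorphicity and holonomy-freeness of the Levi foliation on bands of regular values --- and makes the density conclusion independent of first constructing $\chi$.
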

\begin{proof}
As $\Sigma_X=X$, the Levi form of $\phi$ is degenerate everywhere on $X$, so every connected component of a regular level of $\phi$ is a local maximum set, hence by \cite{Slo-Tom}*{Theorem 3.9}, the forms $\de\debar\phi\wedge\de\phi$ and $\de\debar\phi\wedge\debar\phi$ vanish. We consider a connected component $W$ of $\{p\in X\ :\ r<\phi(p)<s\}$ such that all the level sets of $\phi$ contained in $W$ are regular and connected.

By Lemma \ref{lmm_funz} and the subsequent Remark, we have a non-negative function $\mu$ on $W$ such that $\de\debar\phi=\mu\de\phi\wedge\debar\phi$.

Suppose now that $d\mu\wedge d\phi$ does not vanish identically; we want to show that the generic levels of $\phi$ and $\mu$ intersect transversally.

We note that $\ker(d\mu\wedge d\phi)\supset\ker(\de\phi\wedge\debar\phi)$.

We define $F:W\to\R^2$ by $F(p)=(\phi(p),\mu(p))$; obviously, $d\mu\wedge d\phi(p)\neq 0$ if and only if $\mathrm{rk}\,DF(p)=2$. Therefore, there is an open set $U\subset W$ such that $DF$ has rank $2$, i.e. $F(U)$ is an open set in $\R^2$. By Sard's theorem, there are uncountably many regular values of $F$ in $U$. Let $c\in\R^2$ be such a regular value; then,
$$\big\{p\in W\ :\ F(p)=c\big\}=C$$
is a $2$-dimensional compact manifold (as the level sets of $\phi$ are compact), whose tangent space at every point is $\ker(\de\phi)\cap\ker(\debar\phi)$, i.e. a complex subspace of the tangent space of $X$; therefore $C$ is a compact complex curve.

This produces uncountably many compact complex curves. By Nishino, we have that $X$ is foliated by compact curves.

If $d\mu\wedge d\phi=0$, then $\mu$ is constant on the level sets of $\phi$, which are connected; therefore, we can define $m:\R\to\R$ such that $m\circ\phi=\mu$. An easy check in a coordinate patch shows us that $m$ is smooth.

We set $\chi=\lambda\circ \phi$ and we compute
$$\de\debar\chi=(\lambda''\circ\phi)\de\phi\wedge\debar\phi + (\lambda'\circ\phi)\de\debar\phi=(\lambda''\circ\phi)\de\phi\wedge\debar\phi + (\lambda'\circ\phi)\mu\de\phi\wedge\debar\phi=$$
$$=(\lambda''\circ\phi)\de\phi\wedge\debar\phi + (\lambda'\circ\phi)(m\circ\phi)\de\phi\wedge\debar\phi\;.$$

Let $t=\phi(p)$, then $\chi$ is pluriharmonic if
\begin{equation}\label{eqdiff}\lambda''(t)+\lambda'(t)m(t)=0;.\end{equation}
We can find a solution such that $\lambda'>0$. For such a choice of $\lambda$,  $\chi$ is pluriharmonic on $W$.

If there is any compact complex curve in $W$, by the argument in \cite{Mon-Slo-Tom}*{Section 4}, we propagate the curve and, by Nishino's theorem we conclude again that $X$ is proper on an open Riemann surface.

If there are no compact complex curves, we have that, on the manifold $Y=\{\chi=\chi(p)\}$ for $p\in W$, the Levi foliation is defined by the form $d^c\chi$, which is closed. Hence, the argument used in the proof of \cite{Mon-Slo-Tom}*{Corollary 3.8} implies that $Y$, which doesn't contain any compact curve, is foliated in dense complex leaves.
\end{proof}

With no effort, we obtain also a ``local'' version of the previous result, employing the full statement of \cite{Slo-Tom}*{Theorem 3.9}.

\begin{cor}Let $X$ be a complex surface with a smooth plurisubharmonic exhaustion function $\phi$. Suppose that $r\in\R$ is such that there exists a connected component $Y$ of $\{p\in X\ :\ \phi(p)=r\}$ which is relatively open in the level set and is contained in $\Sigma_X$. Then, either $X$ is proper on an open Riemann surface or there is $s<r$ such that for every regular value $t\in (s,r)$, the corresponding level set is Levi flat and foliated with dense complex curves.\end{cor}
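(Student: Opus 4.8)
\emph{Proof proposal.} The plan is to transplant the proof of Theorem \ref{teo_smooth} into the region furnished by the propagation result \cite{Slo-Tom}*{Theorem 3.9}, rather than carrying it out on all of $X$. First I would check that $Y$ satisfies the hypotheses of \cite{Slo-Tom}*{Theorem 3.9}. We have $Y\subseteq\Sigma_X\cap\{x\in X\ :\ \phi(x)=r\}$, and the latter set has the local maximum property by the first Proposition of Section \ref{prelim} (the general case being \cite{Mon-Slo-Tom}*{Theorem 3.2}). Since $Y$ is, by assumption, relatively open — hence clopen — in $\{\phi=r\}$, it is also clopen in $\Sigma_X\cap\{\phi=r\}$, and a clopen subset of a local maximum set is again one: shrink the distinguished neighbourhoods so that they meet the ambient set only along $Y$. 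A point of a regular portion of $\{\phi=r\}$ is never a local minimum of $\phi$, and if $r=\min_X\phi$ the assertion is vacuous, so we may assume, as in \cite{Slo-Tom}*{Theorem 3.9}, that $Y$ contains no local minimum of $\phi$. Then that theorem produces $s<r$ and (the closure of) a connected component $K$ of $\{s<\phi<r\}$, compact, with non-empty interior and $bK\subseteq Y\cup\{\phi=s\}$, on which — since $\dim_{\C}X=2$, so $(\de\debar\phi)^{n-1}=\de\debar\phi$ — the forms $\de\debar\phi\wedge\de\phi$, $\de\debar\phi\wedge\debar\phi$ and $\de\debar\phi\wedge\de\phi\wedge\debar\phi$ all vanish, and every slice $\{x\in K\ :\ \phi(x)=t\}$, $s\le t\le r$, has the local maximum property. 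For a regular value $t\in(s,r)$ such a slice misses $bK$, hence it is a compact set of regular points of $\phi$ contained in the interior of $K$.

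Next I would reproduce the argument of Theorem \ref{teo_smooth} on $W:=\mathrm{int}(K)\setminus\textrm{Crt}(\phi)$, where $|d\phi|\neq 0$ and $\de\debar\phi\wedge\de\phi=\de\debar\phi\wedge\debar\phi=0$. By Lemma \ref{lmm_funz} and the remark following it, there is a non-negative $\mu:W\to\R$ with $\de\debar\phi=\mu\,\de\phi\wedge\debar\phi$. If $d\mu\wedge d\phi$ is not identically zero, then — exactly as in the proof of Theorem \ref{teo_smooth} — Sard's theorem applied to $F=(\phi,\mu)$ yields uncountably many regular values $c$ for which $\{x\in W\ :\ F(x)=c\}$ is a compact complex curve, compactness coming from its being a closed subset of a slice $\{\phi=t\}\cap K$ with $t\in(s,r)$. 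These curves are pairwise disjoint and all contained in the relatively compact set $K$, so the theorems of Nishino (see Section \ref{Nino}) apply to the weakly complete surface $X$ and give that $X$ is proper over an open Riemann surface.

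If instead $d\mu\wedge d\phi\equiv 0$ on $W$, then $\mu$ is locally a function of $\phi$; around any point of a slice $\{\phi=t\}\cap K$, with $t\in(s,r)$ a regular value, I would choose a flow-box $V$ on which $\phi$ has connected level sets, write $\mu=m\circ\phi$ there, solve $\lambda''+\lambda'm=0$ with $\lambda'>0$, and verify that $\chi:=\lambda\circ\phi$ is pluriharmonic on $V$. Patching these shows that each connected component $Y_t$ of $\{\phi=t\}\cap K$ is, locally, a regular level set of a pluriharmonic function, hence Levi flat. If some $Y_t$ contained a compact complex curve, that curve would propagate by \cite{Mon-Slo-Tom}*{Theorem 4.2} and again force $X$ to be proper over an open Riemann surface, landing us in the first alternative; otherwise the Levi foliation of $Y_t$ is defined by the closed form $d^{c}\chi$, and the argument in the proof of \cite{Mon-Slo-Tom}*{Corollary 3.8} shows that its leaves are dense in $Y_t$. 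Running this for every regular value $t\in(s,r)$ gives the conclusion.

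The main obstacle, compared with Theorem \ref{teo_smooth}, is that the degeneracy of the Levi form of $\phi$ is guaranteed only on $K$, not on all of $X$, and that the slices $\{\phi=t\}\cap K$ need not be connected; consequently the pluriharmonic function cannot be produced on a whole slab but only on flow-boxes, and one must make sure that the curves manufactured in the non-degenerate case are genuinely \emph{compact} — which is precisely what parts $(a)$ and $(c)$ of \cite{Slo-Tom}*{Theorem 3.9} secure. Modulo these points, the proof is a verbatim repetition of that of Theorem \ref{teo_smooth}.
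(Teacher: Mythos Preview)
Your proposal is correct and follows exactly the route the paper intends: the paper's own ``proof'' is the single sentence that the corollary follows ``with no effort'' from Theorem~\ref{teo_smooth} by ``employing the full statement of \cite{Slo-Tom}*{Theorem 3.9}'', and you have simply spelled out what that entails --- verifying the local-maximum hypothesis for $Y$, extracting $K$ and the vanishing of $\de\debar\phi\wedge\de\phi$, $\de\debar\phi\wedge\debar\phi$ on it, and then rerunning the dichotomy of Theorem~\ref{teo_smooth} there. Your added care about compactness of the curves $\{F=c\}$ inside $K$ and about the possible non-connectedness of the slices is appropriate and does not depart from the paper's approach.
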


Moreover, recalling the homological condition used in Section 3, we have the following.

\begin{cor}Suppose $X$ is a complex surface of Grauert type, endowed with a smooth plurisubharmonic exhaustion $\phi$, such that $H^2(X,\R)=0$. Suppose that there exists $c\in\R$ such that the level sets of $\phi$ contained in $X\setminus\{\phi\leq c\}$  are connected and contain at least one regular point of $\phi$. Then there exists a real-analytic plurisubharmonic exhaustion.\end{cor}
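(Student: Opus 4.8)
The plan is to produce, on the complement of a suitable compact subset of $X$, a pluriharmonic --- hence real-analytic --- function of the form $\lambda\circ\phi$; to extend it across the compact set by means of the cohomological hypothesis; and finally to rescale it, by composing with a convex increasing real-analytic function, so as to obtain a genuine exhaustion.

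First I would note that $\Sigma_X=X$, which is part of the structure of a Grauert-type surface (see \cite{Mon-Slo-Tom}); hence $X$ and $\phi$ satisfy the hypotheses of Theorem \ref{teo_smooth}. Since $X$ is of Grauert type one has $\ocal(X)=\C$, so $X$ is not proper over an open Riemann surface, and Theorem \ref{teo_smooth} places us in its second alternative: in particular the connected components of the regular level sets of $\phi$ are Levi flat and foliated by dense complex leaves. After replacing $c$ by a slightly larger regular value of $\phi$ (possible by Sard's theorem), I would argue as in the proof of Theorem \ref{teo_smooth}: on the regular locus of $X$ one has $\de\debar\phi=\mu\,\de\phi\wedge\debar\phi$ with $\mu\ge0$, and $\mu$ is constant along the leaves of the Levi foliation of the regular level sets of $\phi$. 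Since the level sets $\{\phi=t\}$ with $t>c$ are connected and carry a regular point, $\mu$ is constant on each of them, so $\mu=m\circ\phi$ on $\{\phi>c\}$ for a smooth function $m\colon(c,+\infty)\to\R$. Solving the linear equation $\lambda''+\lambda'm=0$ with $\lambda'>0$, the function $\chi:=\lambda\circ\phi$ satisfies $i\de\debar\chi=\big(\lambda''+\lambda'm\big)(\phi)\,i\de\phi\wedge\debar\phi=0$ on the regular locus, hence everywhere on $X\setminus\{\phi\le c\}$ by continuity; thus $\chi$ is pluriharmonic there, with $\lambda$ smooth and strictly increasing.

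Next I would extend $\chi$ across the compact set $K:=\{\phi\le c\}$. By Poincar\'e duality, $H^2(X,\R)=0$ yields $H^2_c(X,\R)=0$, so Proposition \ref{Hartogs_P} (with $W=U=X$) produces a pluriharmonic --- hence real-analytic --- extension $\widetilde\chi\colon X\to\R$. Set $\beta:=\lim_{t\to+\infty}\lambda(t)\in(\lambda(c),+\infty]$. As $c$ is a regular value, the maximum principle gives $\sup_K\widetilde\chi=\lambda(c)<\beta$ (because $\widetilde\chi=\lambda(c)$ on the boundary $\{\phi=c\}$ of $K$), so $\widetilde\chi<\beta$ on all of $X$; and since $\widetilde\chi=\lambda\circ\phi$ on $\{\phi>c\}$ with $\lambda$ strictly increasing, the sublevel $\{\widetilde\chi\le t\}$ is relatively compact in $X$ for every $t<\beta$ (it meets $\{\phi>c\}$ in $\{c<\phi\le\lambda^{-1}(t)\}$ and $K$ in a closed subset of a compact set).

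Finally I would pick a real-analytic convex increasing function $g\colon(-\infty,\beta)\to\R$ with $g(t)\to+\infty$ as $t\to\beta^-$ --- for example $g(t)=(\beta-t)^{-1}$ when $\beta<+\infty$, and $g=\mathrm{id}$ when $\beta=+\infty$ --- and set $\psi:=g\circ\widetilde\chi$. Then $\psi$ is real-analytic; it is plurisubharmonic, since $i\de\debar\psi=g''(\widetilde\chi)\,i\de\widetilde\chi\wedge\debar\widetilde\chi\ge0$ (using $i\de\debar\widetilde\chi=0$ and $g''\ge0$); and it is an exhaustion, since $\psi^{-1}((-\infty,N])=\{\widetilde\chi\le g^{-1}(N)\}\Subset X$ for every $N$, as $g^{-1}(N)<\beta$. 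Hence $\psi$ is the desired real-analytic plurisubharmonic exhaustion. The step I expect to require the most care is the passage from $\mu$ to the one-variable function $m$ on $(c,+\infty)$ --- that is, checking that $\mu$ is genuinely constant on each level set of $\phi$ lying above $c$ --- which is precisely where the connectedness of those level sets and the density of the Levi leaves (the Grauert-type hypothesis, via Theorem \ref{teo_smooth}) are used; everything after that is routine bookkeeping.
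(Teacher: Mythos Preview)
Your argument is correct and follows essentially the same route as the paper's proof: use the Grauert-type hypothesis to get $\Sigma_X=X$, invoke Theorem \ref{teo_smooth} to obtain the relation $\de\debar\phi=\mu\,\de\phi\wedge\debar\phi$ and pass from $\mu$ to a one-variable function $m$ above level $c$, solve the ODE for $\lambda$ to produce a pluriharmonic $\chi=\lambda\circ\phi$ on $X\setminus\{\phi\le c\}$, and then extend across the compact via Proposition \ref{Hartogs_P}. Your treatment of the final step---showing that the extended pluriharmonic function can be turned into a genuine real-analytic plurisubharmonic exhaustion by composing with a convex increasing real-analytic $g$---is in fact more explicit than the paper's, which simply asserts ``we obtain the desired exhaustion''; and you correctly identify the passage from $\mu$ to $m$ as the delicate point, exactly where the paper also appeals to the connectedness hypothesis and an extension ``by continuity''.
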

\begin{proof} We note that the regular values of $\phi$ are an open set in $\R$, because the set of critical points is closed and the function $\phi$ is proper, hence closed. So, by Sard's theorem, the set of critical values has measure $0$, so we have a sequence of regular values growing to $+\infty$; moreover, the closure of the regular values is the image of $\phi$.

For any interval $I=(a,b)$, constituted only of regular values, with $a>c$, we consider the set $W=\{a<\phi(p)<b\}$. Since $X$ is of Grauert type, $\Sigma_X=X$ and, following the proof of Theorem \ref{teo_smooth}, we have a pluriharmonic function $\chi:W\to\R$, which is obviously constant on (the connected components of) the level sets of $\phi$. Moreover, such a function $\chi$ is obtained via a function $m:I\to\R$; the construction of the function $m$ is local and works around every regular point, so we can extend it to the whole of $X\setminus\{\phi\leq c\}$ by continuity. Therefore, we obtain a function $\lambda$ satisfying \eqref{eqdiff} and, subsequently, a pluriharmonic function $\chi:X\setminus\{\phi\leq c\}\to\R$.

As $H^2(X,\R)=0$, by Proposition \ref{Hartogs_P}, we can extend $\chi$ as a pluriharmonic function on $X$ and we obtain the desired exhaustion.
\end{proof}

\subsection{The singular locus of an admissible class}

In a series of papers, Harz, Shcherbina and the second author introduced a concept analogous to the minimal kernel, dealing with bounded plurisubharmonic functions, see \cites{HST1, HST2, HST3}. The \emph{core} of a complex manifold $X$ is defined as the set $\mathbf{c}(X)$ of points where every \emph{bounded} plurisubharmonic function has a degenerate Levi form; as for the minimal kernel, such definition can be given in any regularity class, from continuous to smooth or real analytic.

The presence of the core of a domain in a complex manifold is obviously linked to the existence of strictly plurisubharmonic defining functions; as for the minimal kernel, in dimension $2$ the nature of the core is better understood and linked to the presence of some analytical objects, namely complex curves in the intersections of the core with level sets of a minimal function, where all the bounded plurisubharmonic functions are constant. For further details we refer the interested reader to the papers cited before or to the electronic preprint \cite{HST0}.

The existence of such objects is meaningful, as they, in some sense, explain the failure at being strictly plurisubharmonic because they force such functions to be constant; this point of view has been examined and studied in detail by Slodkowski in \cite{Slo}, where he generalizes the constructions of the minimal kernel and the core to what he calls singular locus of an admissible class of functions.

\begin{defi}Let $X$ be a complex manifold and let $\mathcal{U}$ be the set of all the open sets $U\subset X$ such that at least one between $U$ and $X\setminus U$ is relatively compact. An admissible class $\mathcal{F}$ is the datum, for every $U\in\mathcal{U}$, of a set $\mathcal{F}(U)$ of continuous plurisubharmonic functions on $U$ with the following properties:
\begin{enumerate}
\item if $\phi\in\mathcal{F}(U)$ and $W\in\mathcal{U}$, $W\subset U$, then $\phi\vert_W\in\mathcal{F}(W)$
\item if $\phi:X\to\R$, $U_1,\ldots, U_n\in\mathcal{U}$ form a covering of $X$ and $\phi\vert_{U_i}\in\mathcal{F}(U_i)$ for $i=1,\ldots, n$, then $\phi\in\mathcal{F}(X)$
\item $\mathcal{F}(U)$ is a convex cone and contains every bounded smooth plurisubharmonic function on $U$
\item if $\phi_1,\ldots, \phi_n\in\mathcal{F}(U)$ and $v:\R^n\to\R$ is smooth, convex, of at most linear growth and with non-negative partial derivatives, then $\phi=v(\phi_1,\ldots,\phi_n)$ belongs to $\mathcal{F}(U)$
\item if $\phi\in\mathcal{F}(U)$ is strongly plurisubharmonic and if $\rho\in\mathcal{C}^\infty(U)$ with $\overline{\mathrm{supp}\;\rho}\subset U$, then there is $t>0$ such that $\phi+t\rho\in\mathcal{F}(U)$
\item for every sequence $\phi_n\in\mathcal{F}(X)$, there are positive numbers $\epsilon_n$ such that $\sum_n\epsilon_n\phi_n$ converges uniformly on compact subsets of $X$ to a function in $\mathcal{F}(X)$.
\end{enumerate}
\end{defi}

Examples of admissible classes are the class of plurisubharmonic exhaustion function (of a given regularity), the class of bounded plurisubharmonic functions (of a given regularity), the class of all plurisubharmonic functions (of a given regularity); other examples can be obtained by adding any kind of growth condition.

\begin{defi}Let $\mathcal{F}$ be an admissible class, then the singular locus $\Sigma^{\mathcal{F}}$ of $\mathcal{F}$ is defined as the set of points of $X$ where no function of $\mathcal{F}(X)$ is strongly plurisubharmonic. A function $\phi\in\mathcal{F}(X)$ is called $\mathcal{F}$-minimal if it is strongly plurisubharmonic exactly on $X\setminus\Sigma^{\mathcal{F}}$.\end{defi}

As it is proved in \cite{Slo}*{Section 4}, the singular locus, if non empty, is pseudoconcave, since its intersection with a ball in $X$ is a local maximum set (see \cite{Slo}*{Proposition 4.4 and Lemma 4.5}). Moreover, $\Sigma^{\mathcal{F}}$ can be decomposed in pseudoconcave parts where all the functions in $\mathcal{F}(X)$ are constant \cite{Slo}*{Theorem 4.7}. For the minimal kernel, we can also prove that such pseudoconcave parts are compact (see \cite{Slo}*{Theorem 5.2}).

\medskip

We give the following definition (see \cite{Slo}*{Section 5}.
\begin{defi}Let $X$ be a complex manifold, $k\in\N\cup\{\infty\}$ and consider the admissible class $\mathcal{F}$ such that, for $U\in\mathcal{U}$,  $\mathcal{F}(U)$ is the set of all $\mathcal{C}^k$ plurisubharmonic functions on $U$. We define the \emph{minimal kernel} of $X$ as the singular locus of $\mathcal{F}$ and we denote it by $\Sigma_X$.\end{defi}

It is easy to see that, if $X$ admits a $\mathcal{C}^k$ plurisubharmonic exhaustion function, then $\Sigma_X$ coincides with the previously defined minimal kernel of a weakly complete space. The decomposition in pseudoconvex parts still holds, however they may not be compact anymore.

\subsection{A Levi problem}\label{levipb}

Let $M$ be a complex manifold.

\begin{defi} For $U\subseteq M$ open domain with smooth boundary, we define the \emph{boundary kernel} of $U$ as
$$
{\rm b}\Sigma_U=\overline{\Sigma}_U\cap {\rm b}U\;,
$$
where the closure is taken with respect to the topology of $M$.
\end{defi}

\begin{defi} For $Y\subseteq M$ closed, let $\mathcal{U}(Y)$ be a basis of open neighborhoods of $Y$ in $M$. We define the \emph{psh kernel} of $Y$ as
$$\Sigma_{Y}=\bigcap_{U\in\,\mathcal{U}(Y)}\Sigma_U\;.$$
\end{defi}

The definition of some kind of minimal functions for the boundary kernel is not obvious and we do not know any results in this direction.
\begin{question}Is there any kind of minimal function for the boundary kernel of a smoothly bounded domain?\end{question}
In the case of the psh kernel of a closed set $Y$, a reasonable definition would be a plurisubharmonic function defined in a neighborhood $Y$ which is strongly plurisubharmonic exactly outside of $\Sigma_Y$; this is equivalet to ask that there exists a neighborhood $U$ of $Y$ such that $\Sigma_U=\Sigma_Y$. If $\Sigma_Y=\emptyset$, the existence of such a function follows from the definition of the psh kernel. In general, we do not know if such a function exists.

Another interesting question is the following.
\begin{question}Are there any relations between ${\rm b}\Sigma_U$ and $\Sigma_{{\rm b}U}$?\end{question}
In general, they do not coincide, as the following example shows.

\begin{exa}\label{ex_blowup}
Let $M$ be the blow-up of $\C^2$ at the origin $\{z=w=0\}$, $\pi:M\to\C^2$ the reduction map, $E=\pi^{-1}(0,0)$ the exceptional divisor. Let $U=\{|z-1|^2+|w|^2<1\}$ and $\Omega=\pi^{-1}(U)$. Then, $U$ is Stein, so $\Omega$, which is biholomorphic to $U$, is Stein and $\phi:=\psi\circ\pi$, with $\psi(z,w)=|z-1|^2+|w|^2-1$, is a defining function for $\Omega$. It follows that ${\rm b}\Sigma_\Omega=\emptyset$. On the other hand ${\rm b}\Omega$ contains $E$, where every plurisubharmonic function has a degenerate Levi form, hence $E\subseteq \Sigma_{{\rm b}\Omega}$.\end{exa}

We do not know if the inclusion ${\rm b}\Sigma_U\subseteq\Sigma_{{\rm b}U}$ holds in general, however, if we consider the kernels $\Sigma^0$ given by continuous plurisubharmonic functions, we have the following result.

\begin{lem} Suppose $U$ is relatively compact in $M$. If there exists a function $u$ defined in a neighborhood of $\overline{U}$ and plurisubharmonic there such that $U=\{u<0\}$, then ${\rm b}\Sigma^0_U\subseteq\Sigma^0_{{\rm b}U}$.\end{lem}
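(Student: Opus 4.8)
The plan is to argue by contradiction, transplanting a strongly plurisubharmonic function from a neighbourhood of ${\rm b}U$ into $U$. Assume $p\in{\rm b}\Sigma^0_U=\overline{\Sigma}^0_U\cap{\rm b}U$ but $p\notin\Sigma^0_{{\rm b}U}$. Since ${\rm b}U$ is compact and $p\in{\rm b}U$, every neighbourhood of ${\rm b}U$ contains $p$, so $p\notin\Sigma^0_{{\rm b}U}=\bigcap_W\Sigma^0_W$ forces the existence of an open set $W\supseteq{\rm b}U$ and a continuous plurisubharmonic function $\psi$ on $W$ that is strongly plurisubharmonic at $p$. As strong plurisubharmonicity of a continuous function is an open condition, we fix an open set $W'$ with $p\in W'\subseteq W$ on which $\psi$ is strongly plurisubharmonic. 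The aim is to construct a continuous plurisubharmonic function $F$ on $U$ that is strongly plurisubharmonic on $N\cap U$ for some open neighbourhood $N$ of $p$ in $M$: then $F$ witnesses $N\cap U\cap\Sigma^0_U=\emptyset$, and since $\Sigma^0_U\subseteq U$ this gives $N\cap\Sigma^0_U=\emptyset$ with $N$ open, contradicting $p\in\overline{\Sigma}^0_U$. Note that $\Sigma^0_U$ is the singular locus of the class of \emph{all} continuous plurisubharmonic functions on $U$, so $F$ is not required to be an exhaustion.

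First I would fix the geometry, taking $u$ continuous as we may. From $U=\{u<0\}$ it follows that $\overline U\subseteq\{u\le0\}$, that ${\rm b}U=\{x\in\overline U:u(x)=0\}$, and in particular $u(p)=0$. For $\delta>0$ the set $K_\delta=\{x\in\overline U:u(x)\ge-\delta\}$ is compact, and as $\delta\downarrow0$ the $K_\delta$ decrease to ${\rm b}U\subseteq W$; hence $K_\delta\subseteq W$ for some $\delta>0$, which I fix. Put $V=U\cap\{u>-\delta\}$: then $V\subseteq K_\delta\subseteq W$, so $u$ and $\psi$ are both defined and plurisubharmonic on $V$. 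Write $m=\min_{K_\delta}\psi\le M=\max_{K_\delta}\psi$.

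The gluing step is next. Choose $A>(M-m)/\delta$ and then $c$ with $M-A\delta<c<m$ (possible because $M-A\delta<m$), and define $F$ on $U$ by $F=\max(\psi+Au,\,c)$ on $V$ and $F=c$ on $U\cap\{u\le-\delta\}$. For $\eta>0$ small enough $\psi+Au\le M-A\delta+A\eta<c$ on $U\cap\{-\delta<u\le-\delta+\eta\}$, so there the first formula also gives $c$; hence $F\equiv c$ on the open set $U\cap\{u<-\delta+\eta\}$, the two formulas agree, and $F$ is a well-defined continuous function on $U$ which is locally either constant or the maximum of the two plurisubharmonic functions $\psi+Au$ (recall $A>0$) and $c$, hence plurisubharmonic on $U$. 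Since $c<m$, for $\rho\in(0,\delta)$ small enough $\psi+Au\ge m-A\rho>c$ on $U\cap\{u>-\rho\}$, so $F=\psi+Au$ on $U\cap\{u>-\rho\}$; setting $N=W'\cap\{u>-\rho\}$, an open neighbourhood of $p$ because $u(p)=0$, we get $F=\psi+Au$ on $N\cap U$, and this is strongly plurisubharmonic there, being the sum of the strongly plurisubharmonic $\psi$ and the plurisubharmonic $Au$. This is the contradiction sought.

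The main obstacle is exactly this gluing. Because $\psi$ is available only on $W$, which need not reach into the region of $U$ where $u$ is very negative, $F$ has to switch from ``essentially $\psi$'' near ${\rm b}U$ to a function defined on all of $U$ before leaving $W$; capping $\psi+Au$ off by a constant $c$ via a maximum does this, but only when $A$ is large enough that $\psi+Au$ has already fallen below $c$ by the time $u$ reaches $-\delta$, and $c$ is small enough ($c<m$) that near $p$ the maximum still selects $\psi+Au$. Reconciling these two requirements is the real content; the remaining ingredients — that strong plurisubharmonicity of a continuous function is open and stable under adding a plurisubharmonic function, and that $\Sigma^0_U$ is defined without an exhaustion condition on the competing functions — are routine.
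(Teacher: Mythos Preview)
Your proof is correct and follows essentially the same route as the paper's: both transplant a plurisubharmonic function $\psi$ from a neighbourhood of ${\rm b}U$ into $U$ by adding a rescaling of $u$ so that $\psi$ plus this rescaling drops below a constant on the inner edge of a collar, and then glue via a maximum with that constant. Your implementation is a little more streamlined --- you use the linear shift $Au$ and the two-term maximum $\max(\psi+Au,c)$, while the paper composes $u$ with a convex increasing $\theta$ and takes the triple maximum $\max\{u,\psi+\theta\circ u,c\}$ --- but the mechanism and the verification are the same.
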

\begin{proof} Let us consider the open set $V$, neighborhood of $bU$ in $M$ and $\psi$ continuous and plurisubharmonic on $U$. We can suppose that $\min\limits_{{\rm b}U}\psi\geq 1$. Let $c<0$ be such that $L=\{c\leq u\leq 0\}$ is compact in $U$ and denote by $S$ the level set $\{u=c\}$. .

Consider a function $\theta:(-\infty,0]\to(-\infty,0]$ satisfying
\begin{enumerate}
\item $\theta(x)<0$ if $x<0$ and $\theta(0)=0$
\item $\theta\circ u$ is continuous and plurisubharmonic on $\mathrm{int}(L)$
\item if $x\in \mathrm{int}(L) $ is a point of strong plurisubharmonicity for $u$, then it is also a point of strong plurisubharmonicity for $\tilde{u}=\theta\circ u$
\item $\theta(c)<c-\max_{L}\psi$.\end{enumerate}
and let $\tilde{u}=\theta\circ u$. Then
$$(\psi+\tilde{u})(x)<u(x)\quad \forall x\in S$$
$$(\psi+\tilde{u})(x)\geq 1>u(x)\quad \forall x\in bU\;.$$
The function
$$v=\max\{u,\psi+\tilde{u}, c\}$$
is continuous and plurisubharmonic on $U$.

Now, if $x\not\in \Sigma^0_{{\rm b}U}$, then we can find a $\psi$ which is strongly plurisubharmonic in a neighborhood of $x$, which implies that $v$ is strongly plurisubharmonic in the intersection of a neighborhood of $x\in {\rm b}U$ with $U$ , so $x\not\in {\rm b}\Sigma^0_{\Omega}$.
\end{proof}

The boundary kernel is clearly linked only with the geometry of the domain $U$ near its boundary, whereas the psh kernel of the boundary ${\rm b}U$ also takes into account what happens \emph{on the boundary}, as it is clearly outlined in Example \ref{ex_blowup}. This leads us to conjecture that the conclusion of the previous Lemma should hold in greater generality.

Considering the greater quantity of information encoded in the psh kernel of the boundary, it is reasonable to think that its presence or absence have a strong role in determining the geometry of the domain. Indeed, we have the following result.

\begin{teo}\label{teo_levi}Let $\Omega\Subset M$ be a smoothly bounded, pseudoconvex domain; if $\Sigma_{{\rm b}\Omega}=\emptyset$, then $\Omega$ is a modification of a Stein space.\end{teo}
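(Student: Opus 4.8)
The plan is to turn the hypothesis $\Sigma_{{\rm b}\Omega}=\emptyset$ into a strictly plurisubharmonic function defined on an \emph{ambient} neighbourhood of ${\rm b}\Omega$, to combine it with the Diederich--Fornaess plurisubharmonic exhaustion attached to the pseudoconvex boundary, and thereby to produce a smooth plurisubharmonic exhaustion of $\Omega$ which is strictly plurisubharmonic outside a compact subset of $\Omega$; the conclusion then follows from the classical fact recorded in \cite{Mon-Slo}*{Lemma 2.1}, that a complex manifold carrying such a function is a modification of a Stein space.

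First I would unwind $\Sigma_{{\rm b}\Omega}=\emptyset$. Each $\Sigma_U$, $U\in\mathcal{U}({\rm b}\Omega)$, is closed, and restriction of functions shows $\Sigma_{U'}\subseteq\Sigma_U$ whenever $U'\subseteq U$; since $\bigcap_U\Sigma_U=\emptyset$, every $x\in{\rm b}\Omega$ admits an open $U_x\supseteq{\rm b}\Omega$ and a smooth plurisubharmonic $\phi_x$ on $U_x$ which is strictly plurisubharmonic on an open $W_x\ni x$. Extracting a finite subcover $W_{x_1},\dots,W_{x_N}$ of ${\rm b}\Omega$ and putting $U_0=\bigcap_i U_{x_i}$, the sum $\psi:=\sum_i\phi_{x_i}|_{U_0}$ is smooth, plurisubharmonic on $U_0$, and strictly plurisubharmonic on a neighbourhood $W\supseteq{\rm b}\Omega$ in $M$. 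Separately, fix a smooth defining function $\rho$ for $\Omega$, so $\Omega=\{\rho<0\}$. Covering the compact boundary by finitely many coordinate charts and running the (local) construction of \cite{Die-For} --- which uses only the Levi pseudoconvexity of ${\rm b}\Omega$ --- one gets $\eta\in(0,1)$ and $\delta_0>0$ so that a suitable modification of $-(-\rho)^\eta$ is plurisubharmonic on the collar $\{-\delta_0<\rho<0\}$; composing with $-\log(-\,\cdot\,)$ one obtains there a plurisubharmonic function $u_0$ with $u_0\to+\infty$ as $\rho\to0^-$.

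Now I would glue. Shrinking $W$ so that $\{-\delta_1<\rho\le 0\}\subseteq W$ for some $0<\delta_1<\delta_0$, the function $u_0+\varepsilon\psi$ is strictly plurisubharmonic on $\{-\delta_1<\rho<0\}$ for every $\varepsilon>0$ and still tends to $+\infty$ at ${\rm b}\Omega$, $\psi$ being bounded near the boundary. A standard regularised-maximum construction, gluing $u_0+\varepsilon\psi$ near ${\rm b}\Omega$ with a large enough constant on $\{\rho\le-\delta_1/2\}\cup(\Omega\setminus W)$, yields a smooth plurisubharmonic exhaustion $\Phi:\Omega\to\R$ which agrees with $u_0+\varepsilon\psi$ on some smaller collar $\{\rho>-\delta_2\}$, hence is strictly plurisubharmonic there, i.e.\ strictly plurisubharmonic outside the compact set $\{\rho\le-\delta_2\}\Subset\Omega$. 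Consequently the minimal kernel $\Sigma_\Omega$ is contained in $\{\rho\le-\delta_2\}$ and is therefore compact, so $\Omega$ is a modification of a Stein space by \cite{Mon-Slo}*{Lemma 2.1}.

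I expect the genuine difficulty to be the Diederich--Fornaess step: one must verify that their estimate survives the passage from $\C^n$ to an arbitrary complex manifold $M$ and --- this is the crux --- to a boundary that may be only weakly pseudoconvex, even Levi-flat. In Levi-null directions of ${\rm b}\Omega$ the function $-(-\rho)^\eta$ is merely plurisubharmonic, not strictly so, and it is precisely this failure of strictness that the hypothesis $\Sigma_{{\rm b}\Omega}=\emptyset$ repairs through the function $\psi$. The gluing itself is routine, but it does require choosing the thresholds $\delta_0,\delta_1,\delta_2$, the parameter $\varepsilon$ and the added constant consistently, which accounts for most of the bookkeeping in the argument.
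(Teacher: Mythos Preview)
Your overall architecture is right and matches the paper's: extract from $\Sigma_{{\rm b}\Omega}=\emptyset$ a smooth strictly plurisubharmonic $\psi$ on a neighbourhood of ${\rm b}\Omega$, combine it with a Diederich--Fornaess type construction to obtain a smooth exhaustion of $\Omega$ that is strictly plurisubharmonic near ${\rm b}\Omega$, and conclude. The difference is in how $\psi$ enters, and there the proposal has a genuine gap.

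You separate the two ingredients: first run Diederich--Fornaess ``locally in coordinate charts'' to get a plurisubharmonic $u_0$ on a collar, then add $\varepsilon\psi$ to upgrade to strict plurisubharmonicity. The first step is not justified. On a general complex manifold $M$ there is no way to patch the chart-wise functions $-(-\rho\,e^{-C_i|z^{(i)}|^2})^{\eta_i}$ into a single global plurisubharmonic function on the collar; partitions of unity destroy plurisubharmonicity, and a regularised maximum of these local functions has no reason to have the correct boundary behaviour. Concretely, for a purely complex-tangential vector $\xi_T$ the Levi form of $-(-\rho)^\eta$ is a positive multiple of $(-\rho)\,\de\debar\rho(\xi_T,\bar\xi_T)$, and pseudoconvexity only forces $\de\debar\rho(\xi_T,\bar\xi_T)\geq 0$ \emph{on} ${\rm b}\Omega$; just inside, this can be negative of order $|\rho|$, so $-(-\rho)^\eta$ is in general \emph{not} plurisubharmonic on the collar. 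Your diagnosis in the last paragraph (``in Levi-null directions $-(-\rho)^\eta$ is merely plurisubharmonic'') is therefore too optimistic: it may fail to be plurisubharmonic at all, and $\psi$ is needed not to supply strictness but to make the basic estimate close.

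The paper avoids this by building $\psi$ into the Diederich--Fornaess function from the start: it sets $u=-(-\rho\,e^{-C\psi})^\alpha$ and carries out the Hessian computation globally on a collar. The strict plurisubharmonicity of $\psi$ produces the crucial positive term $C\rho^2\,\de\debar\psi$ which absorbs the negative contribution $-\rho\,\de\debar\rho$ of order $\rho^2$ coming from pseudoconvexity; this is exactly the role $|z|^2$ plays in the original \cite{Die-For} argument in $\C^n$, now transplanted to $M$ via $\psi$. With $u$ strictly plurisubharmonic on $U\cap\Omega$ and $\{u=0\}={\rm b}\Omega$, the paper then takes $\tilde u=\theta\circ\max\{u,c\}$ to get the desired exhaustion. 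If you want to keep the additive flavour of your approach, the honest statement is that $-\log(-\rho)$ has Levi form bounded \emph{below} by $-A\omega$ on a collar (this uses the same order-$|\rho|$ control of $\de\debar\rho$ on complex tangents), so $-\log(-\rho)+B\psi$ is strictly plurisubharmonic for $B$ large; but that is a different computation from the one you sketched and is essentially equivalent to the paper's.
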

\begin{proof}
If $\Sigma_{{\rm b}\Omega}$ is empty, then there exists a strongly pseudoconvex function $\psi$ defined in a neighborhood of ${\rm b}\Omega$, so, by \cite{Rich}, $\psi$ can be taken to be $\ccal^\infty$. Since $\Omega$ is pseudoconvex, then there is a $\ccal^\infty$ function $\rho:M\to\R$ such that
\begin{enumerate}
\item $\Omega=\{x\in M\ :\ \rho(x)<0\}$
\item $d\rho\neq 0$ on $b\Omega=\{\rho=0\}$
\item $i\de\debar\rho(x)$ is positive semidefinite on $T^{1,0}_x{\rm b}\Omega\oplus T^{0,1}_x{\rm b}\Omega$ for every $x\in {\rm b}\Omega$.
\end{enumerate}
We follow closely the approach and the computations of \cite{Die-For}*{Theorem 1}.

We define $u=-(-\rho e^{-C\psi})^\alpha$, where $C$, $\alpha$ are positive real constants to be determined later. Up to shrinking $U$, we can suppose that $\psi$ is bounded, so we can also assume $\psi\geq 0$.

In order to make the computations easier, we take an hermitian metric $\omega$ on $M$, such that $i\de\debar\psi\geq\omega$ in a neighborhood of ${\rm b}\Omega$.

Following \cite{Die-For}, we calculate
$$\debar u=\alpha(-\rho e^{-C\psi})^{\alpha-1}( e^{-C\psi}\debar\rho-C\rho e^{-C\psi}\debar\psi)$$
and
$$\de\debar u=-\alpha(\alpha-1)(-\rho e^{-C\psi})^{\alpha-2}( e^{-C\psi}\de\rho-C\rho e^{-C\psi}\de\psi)\wedge$$
$$\wedge( e^{-C\psi}\debar\rho-C\rho e^{-C\psi}\debar\psi)+$$
$$+\alpha(-\rho e^{-C\psi})^{\alpha-1}( e^{-C\psi}\de\debar\rho-C e^{-C\psi}\de\psi\wedge\debar\rho-$$
$$-C\rho e^{-C\psi}\de\debar\psi-C e^{-C\psi}\de\rho\wedge\debar\psi+C^2\rho e^{-C\psi}\de\psi\wedge\debar\psi)\;.$$
We factor out the term $\alpha(-\rho e^{-C\psi})^{\alpha-2} e^{-2C\psi}$, which is always positive in $U\cap\Omega$, and we obtain
$$-(\alpha-1)\de\rho\wedge\debar\rho + 2\alpha C\rho\Re(\de\rho\wedge\de\psi)-\alpha C^2\rho^2\de\psi\wedge\debar\psi-\rho\de\debar\rho+C\rho^2\de\debar\psi=$$
$$=C\rho^2(\de\debar\psi-C\alpha\de\psi\wedge\debar\psi)-\rho(\de\debar\rho-2\alpha C\Re(\de\rho\wedge\debar\psi))+(1-\alpha)\de\rho\wedge\debar\rho\;.$$
We note that
$$|2\alpha \rho\Re(\de\rho\wedge\debar\psi)|\leq (\alpha^2|\de\rho|^2+C^2\rho^2|\de\psi|^2)\omega\;.$$
The pseudoconvexity of ${\rm b}\Omega$ can be stated by saying that $i\de\debar\rho$ is positive semidefinite on $\ker \de\rho$ in $T^{1,0}_zM$ for $z\in b\Omega$; therefore we can find $C_1>0$ such that
$$i\de\debar\rho\geq -C_1|\de\rho|\omega$$
on $b\Omega$. Now, if $z$ is close enough to $b\Omega$, we can find a point $w\in b\Omega$ such that
$$i\de\debar\rho(z)\geq i\de\debar\rho(w) - C_2|\rho(z)|\omega$$
as $\rho(z)$ is comparable with the distance (with respect to $\omega$) from $z$ to $w$. Therefore, we have a constant $D>0$ such that
$$i\de\debar\rho(z)\geq - D(|\de\rho|+|\rho|)\omega\;.$$
Therefore
$$C\rho^2(\de\debar\psi-C\alpha\de\psi\wedge\debar\psi)-\rho(\de\debar\rho-2\alpha C\Re(\de\rho\wedge\debar\psi))+(1-\alpha)\de\rho\wedge\debar\rho\geq$$
$$\geq C\rho^2(\de\debar\psi-C(1+\alpha)|\de\psi|^2\omega)+(1-\alpha-\alpha^2)|\de\rho|^2+D\rho|\de\rho|\omega-D\rho^2\omega$$
and, as $D|\rho||\de\rho|\leq \delta D (|\de\rho|^2+\delta^{-1}\rho^2))$, we get the lower bound
$$C\rho^2(\de\debar\psi-C(1+\alpha)|\de\psi|^2\omega - C^{-1}D(1+\delta^{-1})\omega)+(1-\alpha-\alpha^2-D\delta)|\de\rho|^2\omega\;.$$
As $0\leq \psi\leq m$, by replacing $\psi$ with $\psi^2/M$, we have that $i\de\debar\psi\geq iM'\de\psi\wedge\debar\psi$, hence
$$\de\debar\psi-C(1+\alpha)|\de\psi|^2\omega - \frac{D}{C}(1+\delta^{-1})\omega\geq (|\de\psi|^2(M'-C(1+\alpha))- \frac{D}{C}(1+\delta^{-1}))\omega$$
and $|\de\psi|^2$ can be supposed to be greater than a positive constant $K$ along ${\rm b}\Omega$. We can choose $M'$, $C$, $\alpha$ and $\delta$ such that $C\sim \delta^{-1}\sim\alpha^{-2}$ and $M'>2C+2DK$ and, finally, $\alpha$ small enough so that $(1-\alpha-\alpha^2-D\delta)>0$.

Hence $u$ is strictly plurisubharmonic in $U\cap \Omega$ and $\{u=0\}=b\Omega$. Therefore, $\{u=c\}$, for $c<0$ and small enough in absolute value, is compact in $U\cap\Omega$; so, we define an exhaustion function $\tilde{u}=\theta\circ\max\{u, c\}$, where $\theta$ is a convex, increasing real function. Then, $\Omega$ is a modification of a Stein space.
\end{proof}

\begin{rem}We note that $\Sigma_{{\rm b}\Omega}=\emptyset$ is not a necessary condition for $\Omega$ to be a modification of a Stein space, as Example \ref{ex_blowup} shows.
\end{rem}

The previous remark leads us to formulate a conjecture.

\begin{conj*}$\Omega$ is a modification of a Stein space if and only if ${\rm b}\Sigma_{\Omega}=\emptyset$.\end{conj*}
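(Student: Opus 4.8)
The plan is to reduce the statement, near ${\rm b}\Omega$, to the classical construction of Diederich and Forn\ae ss \cite{Die-For} of a bounded strictly plurisubharmonic function, using the hypothesis $\Sigma_{{\rm b}\Omega}=\emptyset$ as a substitute for the global strictly plurisubharmonic function one has for free in the Stein case, and then to promote that function to a plurisubharmonic exhaustion of $\Omega$ which is strictly plurisubharmonic outside a compact set.

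First I would unpack the hypothesis. Since a plurisubharmonic function on $U$ restricts to one on any smaller open set, the minimal kernels $\Sigma_U$ decrease as the neighborhood $U$ of ${\rm b}\Omega$ shrinks; the sets $\Sigma_U\cap{\rm b}\Omega$ then form a decreasing net of compacts with empty intersection, so by the finite intersection property some $\Sigma_{U_0}$ misses ${\rm b}\Omega$. Shrinking to a relatively compact neighborhood $U$ of ${\rm b}\Omega$ with $\overline U\subset U_0$ and $U\cap\Sigma_{U_0}=\emptyset$ gives $\Sigma_U=\emptyset$, hence (existence of minimal functions for the singular locus \cite{Slo}, together with Richberg's smoothing theorem \cite{Rich} if one only starts from a continuous strictly plurisubharmonic function) a function $\psi\in\ccal^\infty(U)$ that is strictly plurisubharmonic on all of $U$. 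After the harmless normalizations $\psi\geq 0$ and $\psi$ bounded (shrink $U$), and $i\de\debar\psi\geq M'\,i\de\psi\wedge\debar\psi$ on $U$ for a prescribed large constant $M'$ (replace $\psi$ by $\psi^2/M$), and after fixing a Hermitian metric $\omega$ with $i\de\debar\psi\geq\omega$ near ${\rm b}\Omega$, I am in the setting of \cite{Die-For}.

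Next, using smooth boundedness and Levi pseudoconvexity of $\Omega$, I would pick a smooth global defining function $\rho$ for $\Omega$ ($\Omega=\{\rho<0\}$, $d\rho\neq 0$ on ${\rm b}\Omega$, $i\de\debar\rho$ positive semidefinite on the complex tangent space of ${\rm b}\Omega$) and set $u=-(-\rho e^{-C\psi})^\alpha$ on $U\cap\Omega$, with $C,\alpha>0$ to be fixed. Expanding $\de\debar u$, factoring out the positive coefficient $\alpha(-\rho e^{-C\psi})^{\alpha-2}e^{-2C\psi}$, and bounding the remaining real $(1,1)$-form from below using the semipositivity of $i\de\debar\rho$ on $\ker\de\rho$ together with a distance argument giving $i\de\debar\rho\geq -D(|\de\rho|+|\rho|)\omega$ near ${\rm b}\Omega$, Cauchy--Schwarz to absorb the cross term $\rho\,\Re(\de\rho\wedge\debar\psi)$, and the inequality $i\de\debar\psi\geq M'\,i\de\psi\wedge\debar\psi$, one is left to choose $C\sim\delta^{-1}\sim\alpha^{-2}$, then $M'$ large, and finally $\alpha$ small, so that the lower bound becomes strictly positive. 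This is the step I expect to be the main technical obstacle: the bookkeeping of the several error terms and the correct orders of magnitude of the constants. The only conceptual novelty over \cite{Die-For} is that the whole estimate is local near ${\rm b}\Omega$, so a $\psi$ strictly plurisubharmonic merely on a neighborhood of the boundary --- which is exactly what $\Sigma_{{\rm b}\Omega}=\emptyset$ supplies --- suffices, and no global function on $M$ is needed.

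The outcome is that $u$ is strictly plurisubharmonic on $U\cap\Omega$, negative there, with $u\to 0$ at ${\rm b}\Omega$ and $\{u=0\}={\rm b}\Omega$. Finally I would convert $u$ into an exhaustion. For $|c|$ small the sublevel $\{x\in U\cap\Omega:u(x)\leq c\}$ is compact in $\Omega$ --- it stays away from ${\rm b}\Omega$ because $u\to 0$ there, and away from ${\rm b}U$ because $|u|$ is bounded below by a positive constant there, $\rho$ being bounded away from $0$ near the compact set ${\rm b}U$, which is disjoint from ${\rm b}\Omega=\{\rho=0\}$ --- while $\{u>c\}$ is an open neighborhood of ${\rm b}\Omega$ in $\Omega$. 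Taking $\theta$ convex and increasing with $\theta(t)\to+\infty$ as $t\to 0^-$ (e.g.\ $\theta(t)=-\log(-t)$), the function $\widetilde u:=\theta\circ\max\{u,c\}$ on $U\cap\Omega$, extended by the constant $\theta(c)$ over the interior of $\Omega$, is a smooth plurisubharmonic exhaustion of $\Omega$ --- the only end of $\Omega$ being ${\rm b}\Omega$, since $\Omega\Subset M$ --- and it is strictly plurisubharmonic outside the compact set $K=(\overline\Omega\setminus U)\cup\{u\leq c\}$. A complex manifold carrying a smooth plurisubharmonic exhaustion that is strictly plurisubharmonic outside a compact set is holomorphically convex with a Stein Remmert reduction, i.e.\ a modification of a Stein space (see \cite{Gra}), which is the assertion.
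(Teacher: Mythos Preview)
You have proved the wrong statement. The conjecture concerns the \emph{boundary kernel} ${\rm b}\Sigma_{\Omega}=\overline{\Sigma}_\Omega\cap{\rm b}\Omega$, while your entire argument is run under the hypothesis $\Sigma_{{\rm b}\Omega}=\emptyset$, i.e.\ emptiness of the \emph{psh kernel of the boundary}. These are two distinct objects (the paper itself exhibits a domain where they differ, Example~\ref{ex_blowup}), and what you wrote is essentially the paper's own proof of Theorem~\ref{teo_levi}, which is the preceding result and has exactly $\Sigma_{{\rm b}\Omega}=\emptyset$ as its hypothesis. The conjecture you were asked about is left open in the paper; it has no proof there for you to compare with.

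Concretely, the Diederich--Forn\ae ss computation you carry out needs a smooth strictly plurisubharmonic $\psi$ defined on a \emph{two-sided} neighborhood $U$ of ${\rm b}\Omega$ in $M$. The condition $\Sigma_{{\rm b}\Omega}=\emptyset$ supplies such a $\psi$, as you correctly extract. By contrast, ${\rm b}\Sigma_\Omega=\emptyset$ only says that the minimal kernel of $\Omega$ stays away from ${\rm b}\Omega$, i.e.\ there is a plurisubharmonic function on $\Omega$ that is strictly plurisubharmonic in a one-sided collar $\Omega\cap U$; nothing guarantees it extends across ${\rm b}\Omega$, and without that your construction of $u=-(-\rho e^{-C\psi})^\alpha$ does not get off the ground. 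This is precisely why the paper states the ``if and only if'' for ${\rm b}\Sigma_\Omega$ as a conjecture and only proves the implication under the stronger hypothesis $\Sigma_{{\rm b}\Omega}=\emptyset$ (or, in the remark following the conjecture, under the extra assumption that $\Omega$ is already weakly complete). Finally, note that the conjecture is a biconditional; even setting aside the confusion of kernels, you have addressed only one direction.
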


\begin{rem}If $\Omega$ is weakly complete and ${\rm b}\Sigma_\Omega=\emptyset$, then $\Omega$ is a modification of a Stein space, by \cite{Mon-Slo}.
\end{rem}

	\providecommand{\bysame}{\leavevmode\hbox to3em{\hrulefill}\thinspace}
\renewcommand{\MR}[1]{}
\providecommand{\MRhref}[2]{%
  \href{http://www.ams.org/mathscinet-getitem?mr=#1}{#2}
}
\providecommand{\href}[2]{#2}

\end{document}